\documentclass{amsart}
\usepackage{amsmath,amsfonts,amsthm}
\usepackage{amssymb,latexsym}
\usepackage{mathtools}
\usepackage[colorlinks]{hyperref}
\usepackage{graphicx}
\usepackage[all]{xy}
\usepackage{layout}
\usepackage{bbm}

\theoremstyle{plain}
\newtheorem{theorem}{Theorem}[section]
\newtheorem{lemma}[theorem]{Lemma}
\newtheorem{proposition}[theorem]{Proposition}
\newtheorem{corollary}[theorem]{Corollary}

\theoremstyle{definition}

\newtheorem{definition}[theorem]{Definition}
\newtheorem{example}[theorem]{Example}

\newtheorem{remark}[theorem]{Remark}

 \newcommand{\M}{\mathbb M_{k(n)}(E)}
 
 \newcommand\ssubset{
 	\mathrel{
 		\mathrlap{\subset}
 		\hphantom{\ll}
 		\mathllap{\subset}
 	}
 }
 \pagestyle{plain} \setcounter{page}{1}

 \begin{document}

\title{finite  approximation properties of
$C^{*}$-modules III}

\author{Massoud Amini}

\address{Department of Mathematics\\ Faculty of Mathematical Sciences\\ Tarbiat Modares University\\ Tehran 14115-134, Iran}
\email{mamini@modares.ac.ir, mamini@ipm.ir}





\keywords{$C^*$-module, nuclear dimension, decomposition rank, retraction, trace, AF-algebras, NF-algebras, nuclear, quasidiagonal}

\subjclass[2010]{47A58, 46L08, 15A15}

\maketitle

\begin{abstract}
We introduce and study a notion of module nuclear dimension  for a $C^{*}$-algebra $A$ which is $C^*$-module over another $C^*$-algebra $\mathfrak A$ 
with compatible actions. We show that the module nuclear dimension of $A$ is zero if $A$ is $\mathfrak A$-NF. The converse is shown to hold when $\mathfrak A$ is a $C(X)$-algebra with simple fibers, with $X$ compact and totally disconnected. We also introduce a notion of module decomposition rank, and show that when $\mathfrak A$ is unital and simple, if the module decomposition rank of $A$ is  finite then $A$ is $\mathfrak A$-QD. We study the set $\mathcal T_\mathfrak A(A)$ of $\mathfrak A$-valued module traces on $A$ and relate  the Cuntz semigroup of $A$ with lower semicontinuous affine functions on the set $\mathcal T_\mathfrak A(A)$. Along the way, we also prove a module Choi-Effros lifting theorem. We give estimates of the module nuclear dimension for a class of examples.   
\end{abstract}

\section{introduction} \label{1}

In this paper we continue the study of module version of finite  approximation properties of $C^*$-algebras as studied in \cite{bo}. Some of these,  including important notions such as nuclearity, exactness and weak expectation property (WEP) are extended to the context of $C^*$-algebras with compatible module structure in  \cite{a2}. In \cite{a3} we studied module versions of quasidiagonality and local reflexivity and introduce and studied a notion of amenability for vector valued traces.  Here we continue this study by defining a notion of module nuclear dimension. We define the class of  approximately finite modules as the objects with zero module nuclear dimension. 

A ``finite approximation'' scheme for $C^*$-modules is an approximately commuting diagram as follows,

\begin{center}
$\xymatrix @!0 @C=4pc @R=3pc { A \ar[rr]^{\theta} \ar[rd]^{\varphi_n} && B  \\ & \mathbb M_{k_n}(\mathfrak A) \ar[ur]^{\psi_n}}$
\end{center}

\noindent where $A$ and $B$ are $C^*$-algebras and right $\mathfrak A$-modules with compatible actions and $\theta$, $\varphi_n$ and $\psi_n$ are contractive completely positive (c.c.p.) module maps. The notion of module nuclear dimension for the map $\theta$ is defined as in the classical case, by requiring that given $\varepsilon>0$ and a finite subset $F\ssubset A$, in the above approximately commuting set of triangles, the module maps $\psi_n$ could be chosen to be decomposable into a sum of finitely many order-zero module maps.

The paper is organized as follows: In section \ref{2} we define the notion of module nuclear dimension and compare and contrast it with the existing notions of nuclear dimension and relative nuclear dimension. In section \ref{3} we introduce $\mathfrak A$-AF algebras and show that these are exactly the objects with zero module nuclear dimension. In the last section, we give estimates of the module nuclear dimension for a class of examples.

For the rest of this paper, we fix a $C^*$-algebra $\mathfrak A$ and let $A$ be a $C^*$-algebra and a right Banach
$\mathfrak A$-module (that is, a module with contractive right action) with compatible conditions,
\begin{equation*}
(ab)\cdot\alpha=a(b\cdot\alpha),\,\, a\cdot\alpha\beta=(a\cdot\alpha)\cdot\beta,
\end{equation*}
for each $a,b\in A $ and $\alpha, \beta\in \mathfrak A.$ In this case, we  say that $A$ is a (right) $\mathfrak A$-$C^*$-module, or simply a $C^*$-module (it is then understood that the algebra and module structures on $A$ are compatible in the above sense). A $C^*$-subalgebra which is also an $\mathfrak A$-submodule is simply called a $C^*$-submodule. We say that $A$ is a $*$-module if we have,
$$(a\cdot\alpha)^*=a^*\cdot\alpha^*,\ \ (a\in A, \alpha\in \mathfrak A).$$
If moreover, we have the compatibility condition,
$$(a^*\cdot\alpha^*)^*\cdot\beta=((a\cdot\beta)^*\cdot\alpha^*)^*,$$
for each $a\in A $ and $\alpha, \beta\in \mathfrak A,$ then if we define a left action by
$$\alpha\cdot a:=(a^*\cdot\alpha^*)^*,$$
then $A$ becomes an $\mathfrak A$-bimodule with compatibility conditions,
\begin{equation*}
\alpha\cdot (ab)=(\alpha\cdot a)b,\,\, \alpha\beta\cdot a=\alpha\cdot(\beta\cdot a),\, \, \alpha\cdot(a\cdot \beta)=(\alpha\cdot a)\cdot \beta,
\end{equation*}
for each $a,b\in A $ and $\alpha, \beta\in \mathfrak A.$ In this case, there is a canonical $*$-homomorphism from $\mathfrak A$ to the multiplier algebra $M(A)$ of $A$, sending $\alpha$ to the pair $(L_\alpha, R_\alpha)$ of left and right module multiplication map by $\alpha$. If the action is {\it non degenerate}, in the sense that, given $\alpha$, $a\cdot \alpha=0$, for each $a\in A$, implies that $\alpha=0$ (and so the same for the left action), then the above map is injective and so an isometry, and we could (and would) identify $\mathfrak A$ with a $C^*$-subalgebra of $M(A)$. If this holds for a $*$-module, this forces $\mathfrak A$ to be commutative, say $\mathfrak A\simeq C_0(X)$, and it could be mapped onto a $C^*$-subalgebra of $Z(M(A))$, that is, $A$ is a $C_0(X)$-algebra \cite[page 354]{wi}. 

We say that a two sided action of $\mathfrak A$ on $A$ is a {\it biaction} if the right and left actions are compatible, i.e.,
$$(a\cdot\alpha)b=a(\alpha\cdot b)\ \ (\alpha\in\mathfrak A, a,b\in A).$$
When the action is non degenerate and $\mathfrak A$ acts on $A$ as a $C^*$-subalgebra of $M(A)$, we have a biaction.

In some cases we have to work with operator $\mathfrak A$-modules with no algebra structure (and in particular with certain Hilbert $\mathfrak A$-modules). In such cases, we only assume the second compatibility condition involving multiplication of elements in $\mathfrak A$. If $E, F$ are operator $\mathfrak A$-modules, a module map $\phi: E\to F$ is a continuous linear map which preserves the right $\mathfrak A$-module action.

Throughout this paper, we use the notation $\mathbb B(X)$ to denote the set of bounded adjointable linear operators on an Hilbert $C^*$-module  $X$.

\section{module nuclear dimension} \label{2}

In this section, $\mathfrak A$ is a $C^*$-algebra and $A$, $B$  are right $\mathfrak A$-$C^*$-modules satisfying the compatibility conditions of previous section. When the action is non degenerate, we identify $\mathfrak A$ with a $C^*$-subalgebra of the multiplier algebra $M(A)$ (or that of $A$, when $A$ is unital).

The representations of $\mathfrak A$-$C^*$-modules are defined on $\mathfrak A$-correspondences. An $\mathfrak A$-{\it correspondence} is a right Hilbert $\mathfrak A$-module $X$ with a left action of $\mathfrak A$ via a representation of $\mathfrak A$ into $\mathbb B(X)$. A {\it representation} of a $\mathfrak A$-bimodule $A$ in $X$ is a $*$-homomorphism from $A$ into $\mathbb B(X)$, which is also a right $\mathfrak A$-module map with respect to the canonical right $\mathfrak A$-module structure of $\mathbb B(X)$ coming from the left  $\mathfrak A$-module structure of $X$.

The notion of nuclear dimension of  C*-algebra is defined by Winter and Zacharias \cite{wz} as a non-commutative generalisation of topological covering dimension, by modifying the earlier notion of decomposition rank \cite{kw}. This later notion, as for the older notions of stable rank \cite{r} and real rank \cite{bp} proved to be useful, but somewhat restrictive. For instance, the notion of decomposition rank is used in the classification of stably finite, separable, simple, nuclear C*-algebras, but it restrictive, as a C*-algebra of finite decomposition rank must be quasidiagonal \cite[Theorem
4.4]{kw}. 

 We barrow the notion of admissible maps from \cite[Definition 2.1]{a2}.

\begin{definition} \label{adm}
	The class of admissible c.p. maps between operator $\mathfrak A$-modules is characterized through the following set of rules:
	
	$(i)$ a c.p. module map is admissible,
	
	$(ii)$ if a c.p. map $\phi: A\to B$ is admissible, then so are the maps $x\mapsto \phi(uxu^*)$ and $x\mapsto v\phi(x)v^*$, for each $u\in A$ and $v\in B$,
	
	$(iii)$ the c.p. maps $\phi: A\to B$ of the form $\phi(x)=\rho(x)u$, where $\rho\in A^*_{+}$ and $u\in B^{+}$, are admissible,
	
	$(iv)$ the composition, positive multiples, or finite sums of admissible c.p. maps are admissible.
	
\end{definition}

In the next definition \cite[Definition 4.1.1]{g}, $A_1$ denotes the closed unit ball of $A$.

\begin{definition} \label{oz}
Let $K\subseteq A_1$ be norm compact set and $\varepsilon>0$. A c.c.p. map $\varphi: A\to B$ is called approximately order 0 on $K$
within $\varepsilon$, or simply $(\varepsilon, K)$-order 0, if for each $\delta>0$ and  $a, b\in K$ with $\|ab\|<\delta$, there are 
elements $a^{'}, b^{'}\in A$ within $\frac{\delta}{2}$ of $a, b$, respectively, satisfying $\|\varphi(a^{'})\varphi(b^{'})\|< \varepsilon$. 
\end{definition}

\begin{definition} \label{nuclear}
A $\mathfrak A$-C*-algebra $A$ has module nuclear dimension (over $\mathfrak A$) at most $n$  if for every finite set $\mathcal F\ssubset A$ and $\varepsilon > 0$ there
exist a finite dimensional C*-algebra $F$, c.c.p. admissible map $\psi: A \to  F \otimes \mathfrak A$, a nonempty set $\mathcal G$ of c.p. admissible maps $\varphi: F\otimes \mathfrak A\to A$ such that $\|\varphi\circ\psi(a) - a\|< \varepsilon$, for $a\in \mathcal F$, $\varphi\in\mathcal G$, and $F$ decomposes into $n+1$ ideals $F = F^{(0)}\oplus \cdots\oplus F^{(n)}$  such that for each $i = 0, \cdots, n$,
any compact set $K_i\subseteq (F^{(i)}\otimes \mathcal A)_1$, and any $\varepsilon_i > 0$, there is a $\varphi\in\mathcal G$ whose restrictions $\varphi^{(i)}: F^{(i)}\otimes \mathfrak A\to A$ are $(\varepsilon_i, K_i)$-order 0.

In the above definition, if moreover the family $\mathcal G$ could be arranged to consist of c.p.c module maps $\varphi: F\otimes \mathfrak A\to A$, we say that module decomposition rank of $A$ (over $\mathfrak A$) is at most $n$.  

A $\mathfrak A$-C*-algebra $A$ has module nuclear dimension (resp., decomposition rank) $n$ over $\mathfrak A$, or simply, $\mathfrak A$-nuclear dimension (resp., $\mathfrak A$-decomposition rank) $n$, if $n$ is the least non-negative integer such that $A$ has module nuclear dimension (resp. decomposition rank) at most $n$. In this case, we write ${\rm dim}_{\rm nuc}^\mathfrak A(A)=n$ (resp., ${\rm dr}_{\mathfrak A}(A)=n$). If no such $n$ exists, we write ${\rm dim}_{\rm nuc}^\mathfrak A(A)=\infty$ (resp., ${\rm dr}_{\mathfrak A}(A)=\infty$). 
\end{definition}

The above definition of module nuclear dimension resembles that of relative nuclear dimension ${\rm nd}_\mathfrak A(A)$, defined by R. Gardner in his PhD thesis \cite[Definition 4.1.2]{g}, with the difference that Gardner does not have a module structure and so does not require the maps $\varphi$ and $\psi$ to be admissible maps. Since each approximate factoring via $F\otimes \mathfrak A$ in our definition is a factoring in the sense of Gardner (but not vice-versa), we calculate the least upper bound $n$ on a smaller set, that is,
$$ {\rm nd}_\mathfrak A(A)\leq {\rm dim}_{\rm nuc}^\mathfrak A(A).$$ 
When there is no module structure, since every  c.c.p. order map $\varphi: \mathbb M_n(\mathfrak A)\to A$ is $(\varepsilon, K)$-order 0, for each $\varepsilon>0$ and compact set $K\subseteq \mathbb M_n(\mathfrak A)_1$, we have ${\rm dim}_{\rm nuc}^\mathbb C(A)\leq {\rm dim}_{\rm nuc}(A).$ The reverse inequality now follows from \cite[Proposition 4.2.1]{g}, 
since  $${\rm dim}_{\rm nuc}(A)\leq {\rm nd}_\mathbb C(A)\leq {\rm dim}_{\rm nuc}^\mathbb C(A).$$

We use the standard notation $A_\infty:=\ell^\infty(A)/c_0(A)$, and $A_\omega:=\ell^\infty(A)/c_\omega(A)$, for free ultrafilter $\omega$ on $\mathbb N$. These have a canonical right module structure with compatible action. We need to compare admissible order 0 maps into $A$ with those into $A_\infty$. For this purpose, we need a version of Choi-Effros lifting theorem in the module setting. The original proof of this result \cite[Theorem 3.10]{ce} does not seem to be extendable to admissible maps, and here we adapt a different proof given in \cite[Theorem C.3]{bo}.

\begin{definition} \label{liftable}
	Let $E$ be an operator system and right $\mathfrak A$-module with compatible action, $J$ be a closed ideal and submodule of $B$. A c.c.p. admissible map $\varphi: E\to B/J$ is called liftable if there is a c.c.p. admissible map $\psi: E\to B$ with $q\circ \psi=\phi$, where $q: B\to B/J$ is the quotient map. 
\end{definition}

The proof of the next lemma is verbatim to that of \cite[Lemma C.2]{bo} (one just needs to keep track of inductive construction of maps and observe that in each step, all maps are admissible).

\begin{lemma} \label{closed}
If $E$ is separable, the set of liftable c.c.p. admissible maps: $E\to B/J$ is closed in point-norm topology. 
\end{lemma}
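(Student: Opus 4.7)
The plan is to copy the proof of Lemma~C.2 in \cite{bo} line by line, taking care at each step to observe that all the auxiliary maps produced are admissible in the sense of Definition~\ref{adm}.

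Recall that the BO argument runs as follows. Let $\{\varphi^{(n)}\}$ be a sequence of liftable c.c.p.\ admissible maps $E\to B/J$ converging point-norm to $\varphi$, and choose c.c.p.\ admissible lifts $\psi^{(n)}: E\to B$. Using separability of $E$, fix an exhausting increasing sequence $F_1\subset F_2\subset\cdots$ of finite subsets of the unit ball of $E$, and, after passing to a subsequence, arrange that
\begin{equation*}
\|\varphi^{(n)}(x)-\varphi^{(n-1)}(x)\|<2^{-n}\qquad (x\in F_n).
\end{equation*}
Then I would inductively select a positive contractive approximate unit $0\le h_1\le h_2\le\cdots$ of $J$ which is quasi-central with respect to all the values $\psi^{(k)}(x)$ for $k\le n$, $x\in F_n$, and which additionally satisfies
\begin{equation*}
\bigl\|(1-h_n)^{1/2}\bigl(\psi^{(n)}(x)-\psi^{(n-1)}(x)\bigr)(1-h_n)^{1/2}\bigr\|<2^{-n}\qquad (x\in F_n),
\end{equation*}
which is possible because $\psi^{(n)}-\psi^{(n-1)}$ has image in $B/J$ of norm less than $2^{-n}$ on $F_n$. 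Assembling these pieces into a point-norm convergent series of compressions of the $\psi^{(n)}$'s yields a c.c.p.\ map $\psi: E\to B$, and a routine calculation modulo $J$, using the quasi-centrality of $(h_n)$ together with $\varphi^{(n)}\to\varphi$, gives $q\circ\psi=\varphi$.

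The only modification needed in the present setting is to verify admissibility at each stage. This is straightforward from Definition~\ref{adm}: the original $\psi^{(n)}$'s are admissible by assumption; each compression $x\mapsto u\psi^{(n)}(x)u^*$ preserves admissibility by rule~(ii); and finite sums preserve admissibility by rule~(iv). Hence every partial sum in the construction is admissible as a c.c.p.\ map into $B$. I expect the main (minor) obstacle to be observing that the resulting limit $\psi$, built as a point-norm convergent series of admissible maps of this particular compressed form, is itself admissible---this is the one implicit closure property of the definition that must be invoked, and it is exactly what the parenthetical remark preceding the statement alludes to. Once this is in hand, $\psi$ is a c.c.p.\ admissible lift of $\varphi$, so $\varphi$ itself lies in the set of liftable c.c.p.\ admissible maps, proving the claimed closedness in point-norm topology.
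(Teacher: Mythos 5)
Your proposal follows exactly the paper's route: the paper's own proof is literally ``verbatim to \cite[Lemma C.2]{bo}, keeping track of the inductive construction and observing admissibility at each step,'' which is what you reproduce, invoking rules $(ii)$ and $(iv)$ of Definition \ref{adm} for the compressions and the partial sums. The one subtlety you flag---admissibility of the limit map $\psi$ obtained as a point-norm convergent series of compressed admissible maps---is likewise left implicit in the paper's one-line argument, so your treatment matches it in both approach and level of detail.
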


\begin{lemma} \label{ma}
	If all c.c.p. module maps: $A\to B/J$ are liftable, then so are all c.c.p. admissible maps: $A\to B/J$. 
\end{lemma}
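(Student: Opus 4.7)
The plan is to prove Lemma \ref{ma} by structural induction on the rules (i)--(iv) of Definition \ref{adm} that generate admissible maps, showing that the liftability of c.c.p. module maps propagates to every c.c.p. admissible map $\phi:A\to B/J$. At each step the induction must build a c.c.p. admissible lift $\tilde\phi:A\to B$ rather than merely a c.c.p. one, and normalisation must be tracked so that contractivity is preserved; following the hint after Lemma \ref{closed}, the inductive machinery can be read off the proof of \cite[Theorem C.3]{bo} by inserting ``admissible'' at each step.

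For the base cases, rule (i) gives c.c.p. module maps, which lift by hypothesis, while rule (iii) gives $\phi(x)=\rho(x)u$ with $\rho\in A_+^*$ and $u\in(B/J)^+$. Using that the quotient $B\to B/J$ maps $B_+$ onto $(B/J)_+$ norm-preservingly, pick a positive lift $\tilde u\in B^+$ with $\|\tilde u\|=\|u\|$ and set $\tilde\phi(x)=\rho(x)\tilde u$; this is admissible in $B$ by rule (iii).

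The inductive step for rule (ii) is routine. If $\psi:A\to B/J$ is admissible and already admits an admissible lift $\tilde\psi:A\to B$, then $x\mapsto\tilde\psi(uxu^*)$ lifts $x\mapsto\psi(uxu^*)$ for $u\in A$. For the other form $x\mapsto v\psi(x)v^*$ with $v\in B/J$, choose $\tilde v\in B$ with $q(\tilde v)=v$ and $\|\tilde v\|=\|v\|$, and set $\tilde\phi(x)=\tilde v\,\tilde\psi(x)\,\tilde v^*$, which is admissible by rule (ii) applied in $B$. Positive scalar multiples and finite sums in rule (iv) are handled by scaling and summing the lifts produced inductively, with an overall rescaling at the end to restore contractivity; in the case of a finite sum, one works with an enveloping direct sum before compressing, exactly as in the Choi--Effros argument.

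The main obstacle is the composition clause of rule (iv): if $\phi=\phi_2\circ\phi_1$ with $\phi_1:A\to E$ and $\phi_2:E\to B/J$, our hypothesis only controls maps out of $A$, not out of an intermediate module $E$. The plan is to resolve this via the closure result of Lemma \ref{closed}, arguing (as in \cite[Theorem C.3]{bo}) that every admissible c.c.p.\ map $\phi:A\to B/J$ is the point-norm limit of admissible c.c.p.\ maps built only from the ``non-composition'' generators: finite sums of c.c.p.\ module maps and of rank-one maps $\rho(\cdot)u$, possibly conjugated as in (ii). Each approximant is liftable by the previous steps, and Lemma \ref{closed} then yields a lift of $\phi$ itself. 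I expect the technical core to be precisely this reduction to a ``composition-free'' approximation, since the straightforward inductive argument through the generating tree does not, by itself, fit the hypothesis concerning maps out of $A$.
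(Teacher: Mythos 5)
Your handling of rules (i)--(iii) and of positive multiples is essentially the paper's own argument: module maps lift by hypothesis, $\rho(\cdot)u$ lifts by choosing a positive preimage of $u$, and conjugations lift by conjugating a lift by $u\in A$ resp.\ by a preimage $\tilde v$ of $v$. The divergence is the composition clause of Definition \ref{adm}(iv). You are right that this is the only delicate point (the paper dispatches it in one line, ``the composition\dots of liftable maps are clearly liftable''), but your proposed resolution is not a proof: the pivotal claim --- that every admissible c.c.p.\ map $A\to B/J$ is a point-norm limit of admissible c.c.p.\ maps built \emph{without} the composition rule, from module maps, maps $\rho(\cdot)u$ and conjugations only --- is asserted, not argued, and nothing you cite supplies it. Lemma \ref{closed} only says that the liftable maps form a point-norm closed set; it produces no approximation of a general admissible map by ``composition-free'' ones. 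Likewise, the approximation occurring in \cite[Theorem C.3]{bo} is the factorization of a nuclear map through matrix algebras, which in this paper is precisely the content of Proposition \ref{ce} (where Lemma \ref{closed} is used), not of Lemma \ref{ma}. So the step you yourself identify as the technical core of your plan is an unproved, and far from obvious, structural statement about the admissible class.

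For comparison, the paper's route for (iv) is direct: a composition $\psi\circ\theta$ with $\theta\colon A\to E$ and $\psi\colon E\to B/J$ admissible is lifted by $\tilde\psi\circ\theta$ as soon as one has an admissible c.c.p.\ lift $\tilde\psi$ of the \emph{outer} factor, since composing with $\theta$ preserves admissibility and contractivity; no approximation argument enters, and the induction is run on liftability of the map into $B/J$. (Your observation that the hypothesis literally concerns only maps out of $A$, not out of an intermediate module $E$, is a fair criticism of how tersely the paper treats this case, but replacing it by an unestablished approximation claim does not close that gap.) Two further points: your ``overall rescaling at the end to restore contractivity'' does not do what you want --- $\lambda\tilde\varphi$ is a lift of $\lambda\varphi$, not of $\varphi$; for a finite sum of liftable maps the sum of the lifts is already a lift of the sum, and it is contractivity, not the lifting identity, that must be addressed directly (the paper again treats this as immediate). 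And in the base case (iii) no lifting of $\rho$ is needed at all, since $\rho\in A_+^*$ is a functional on the domain; only $u$ has to be lifted, as you do.
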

\begin{proof}
	We need to check that  the constructions $(ii)$-$(iv)$ in Definition \ref{adm} preserves liftability. For $(ii)$, if $\varphi: A\to B/J$ is liftable with lift $\tilde\varphi:A\to B$, then for $u\in A$ and $v\in B$, the maps defined by $\varphi_1(x)=\varphi(uxu^*)$ and $\varphi_2(x)=q(v)\varphi(x)q(v)^*$ have lifts  $\tilde\varphi_1(x)=\tilde\varphi(uxu^*)$ and $\tilde\varphi_2(x)=v\tilde\varphi(x)v^*$. In $(iii)$, the map  $\varphi(x)=\rho(x)q(u)$, for $u\in B^{+}$ has lift  $\tilde\varphi(x)=\tilde\rho(x)u$, and finally, the composition, positive multiple and sum of liftable maps are clearly liftable.
\end{proof}

Now we could prove the analog of the celebrated Choi-Effros lifting theorem for admissible maps. We first recall the notion of $\mathfrak A$-nuclearity for objects and morphisms of the category of $\mathfrak A$-C*-algebras (c.f., \cite[Definitions 2.2, 2.6]{a2}).

\begin{definition}\label{nuc}
	A c.c.p. admissible map $\theta: A\to B$ is called $\mathfrak A$-nuclear if there are c.c.p. admissible maps $\varphi_n: A\to \M$ and $\psi_n: \M\to B$ such that $\psi_n\circ\varphi_n\to \theta$ in point-norm topology, that is,
	$$\|\psi_n\circ\varphi_n(a)-\theta(a)\|\to 0,$$
	for each $a\in A$.
	When id$_A$ is $\mathfrak A$-nuclear, we say that $A$ is $\mathfrak A$-nuclear.
\end{definition}

\begin{proposition}[Choi-Effros] \label{ce}
If $A$ and $\mathfrak A$ are separable, every $\mathfrak A$-nuclear c.c.p. admissible map: $A\to B/J$ is liftable. In particular, if $A$ is separable and $\mathfrak A$-nuclear, then every c.c.p. admissible map: $A\to B/J$ is liftable.
\end{proposition}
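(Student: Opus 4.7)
The plan is to adapt the Brown--Ozawa proof of the Choi--Effros lifting theorem \cite[Theorem C.3]{bo} to the module setting, exploiting the two preparatory lemmas already in place. The argument proceeds in two stages: first, show that every c.c.p. admissible map out of the finite-dimensional building block $\M$ is liftable; second, push this to any $\mathfrak A$-nuclear map by combining the nuclear factorization with the point-norm closedness of the liftable class.

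For the first stage, I would reduce to c.c.p. \emph{module} maps via Lemma~\ref{ma}, and then lift a given c.c.p. module map $\psi: \M \to B/J$ by treating it as a positive contraction in a matrix algebra over $B/J$. Specifically, $\psi$ is determined by its values on matrix units $e_{ij}\otimes 1$, which yield a positive contractive element of $M_{k(n)}(B/J)$. Since $M_{k(n)}(B) \twoheadrightarrow M_{k(n)}(B/J)$ is surjective with kernel $M_{k(n)}(J)$, the classical finite-dimensional Choi--Effros argument provides a positive-contractive lift, and using the canonical embedding $\mathfrak A \hookrightarrow M(B)$ coming from non-degeneracy of the action (Section~\ref{1}), this lift extends by right multiplication to a c.c.p. module lift $\tilde\psi: \M \to B$.

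For the second stage, fix an $\mathfrak A$-nuclear c.c.p. admissible map $\theta: A \to B/J$ and choose c.c.p. admissible factorizations $A \xrightarrow{\varphi_n} \M \xrightarrow{\psi_n} B/J$ with $\psi_n\circ\varphi_n \to \theta$ in point-norm. By the first stage each $\psi_n$ lifts to a c.c.p. admissible map $\tilde\psi_n:\M \to B$, and by Definition~\ref{adm}$(iv)$ the composition $\tilde\psi_n\circ\varphi_n : A \to B$ is c.c.p. admissible and lifts $\psi_n\circ\varphi_n$. Hence each $\psi_n\circ\varphi_n$ is liftable, and since $A$ is separable, Lemma~\ref{closed} yields liftability of the point-norm limit $\theta$. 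The ``in particular'' clause follows immediately: when $A$ is $\mathfrak A$-nuclear, $\mathrm{id}_A$ admits a c.c.p. admissible nuclear factorization, and composing any c.c.p. admissible $\theta: A \to B/J$ with this factorization (together with admissibility of compositions) exhibits $\theta$ itself as $\mathfrak A$-nuclear.

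The main obstacle I anticipate is the first stage: ensuring the matrix-coefficient lift yields a genuine \emph{module} map rather than merely a c.c.p. map. The classical argument has no compatibility condition to maintain, so the non-degeneracy hypothesis and the identification $\mathfrak A \subseteq M(B)$ must be used to push the right action through the lift. If that extension step is delicate in full generality, a fallback is to run a Stinespring-type dilation for module maps directly and then cut down using a module conditional expectation; in either case, the key point is that admissibility and the module structure are preserved by every step in the construction, so Lemma~\ref{closed} applies as in the classical setting.
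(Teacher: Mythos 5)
Your proposal follows essentially the same route as the paper: reduce to $\mathbb M_n(\mathfrak A)$ via the approximate factorization and the point-norm closedness of liftable maps (Lemma~\ref{closed}), reduce from admissible to module maps via Lemma~\ref{ma}, and lift a c.c.p. module map out of $\mathbb M_n(\mathfrak A)$ through its Choi matrix by lifting a positive element of $\mathbb M_n(B/J)$ to $\mathbb M_n(B)$. The only difference is that the delicate step you flag---passing between c.c.p. \emph{module} maps on $\mathbb M_n(\mathfrak A)$ and positive matrices in both directions---is handled in the paper by invoking its earlier result \cite[Lemma 3]{a} rather than rerunning the classical Choi argument by hand.
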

\begin{proof}
Since an $\mathfrak A$-nuclear c.c.p. admissible map approximately factors via $\mathbb M_n(\mathfrak A)$, by Lemma \ref{closed} it is enough to prove the assertion for the case $A=\mathbb M_n(\mathfrak A)$. By Lemma \ref{ma} we only need to show liftability for module maps. To a $\mathfrak A$-nuclear c.c.p. module map $\phi: \mathbb M_n(\mathfrak A)\to B/J$, one could associate a positive element $a\in \mathbb M_n(B/J)$ by \cite[Lemma 3]{a}, which in turn lifts to a positive element $b\in \mathbb M_n(B/J)$, and again using the above cited result, we get a lift $\tilde\phi: \mathbb M_n(\mathfrak A)\to B$, which is also a module map.
\end{proof}

\begin{remark} \label{mlift}
The above proof shows that if $A$ is separable and $\mathfrak A$-nuclear, then every c.c.p. module map: $A\to B/J$ is liftable to a {\it module map}: $A\to B$.
\end{remark}

\begin{corollary} \label{q}
	If $J$ is a closed ideal and submodule of $A$ such that $A/J$ is separable and $\mathfrak A$-nuclear, then ${\rm dim}_{\rm nuc}^\mathfrak A(A/J)\leq {\rm dim}_{\rm nuc}^\mathfrak A(A)$. 
\end{corollary}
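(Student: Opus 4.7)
The plan is to push the approximate factorization of the identity on $A$ down to $A/J$, using as the crucial input a c.c.p.\ admissible section of the quotient map. Concretely, since $\mathrm{id}_{A/J} : A/J \to A/J$ is a c.c.p.\ module map (hence admissible by Definition \ref{adm}(i)) whose codomain is the quotient of $A$ by $J$, and since by hypothesis $A/J$ is separable and $\mathfrak A$-nuclear, Proposition \ref{ce} applies and yields a c.c.p.\ admissible lift $\sigma : A/J \to A$ with $q\circ\sigma = \mathrm{id}_{A/J}$, where $q : A \to A/J$ is the quotient map.

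Set $n := {\rm dim}_{\rm nuc}^\mathfrak A(A)$ (finite, else the statement is vacuous). Given a finite set $\mathcal F \ssubset A/J$ and $\varepsilon>0$, let $\tilde{\mathcal F} := \sigma(\mathcal F) \subset A$. Apply Definition \ref{nuclear} to $A$, $\tilde{\mathcal F}$, $\varepsilon$, to obtain a finite-dimensional $C^*$-algebra $F = F^{(0)}\oplus\cdots\oplus F^{(n)}$, a c.c.p.\ admissible map $\tilde\psi : A \to F\otimes\mathfrak A$, and a nonempty family $\tilde{\mathcal G}$ of c.p.\ admissible maps $\tilde\varphi : F\otimes\mathfrak A \to A$ satisfying the approximation $\|\tilde\varphi\tilde\psi(x) - x\| < \varepsilon$ for $x\in\tilde{\mathcal F}$, together with the summand-wise approximate order-zero property. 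Define
$$\psi := \tilde\psi \circ \sigma \colon A/J \to F\otimes\mathfrak A, \qquad \mathcal G := \{\, q\circ \tilde\varphi : \tilde\varphi\in\tilde{\mathcal G}\,\}.$$
By clause (iv) of Definition \ref{adm}, $\psi$ and each $\varphi = q\circ\tilde\varphi \in \mathcal G$ are admissible.

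Both requirements of Definition \ref{nuclear} for $A/J$ at level $n$ then transport easily. For the approximation, using $q\sigma = \mathrm{id}_{A/J}$,
$$\|\varphi\psi(a) - a\| = \bigl\|q\bigl(\tilde\varphi\tilde\psi(\sigma(a)) - \sigma(a)\bigr)\bigr\| \leq \|\tilde\varphi\tilde\psi(\sigma(a)) - \sigma(a)\| < \varepsilon$$
for $a\in\mathcal F$. For the order-zero condition on the $i$-th summand, if $a', b' \in F^{(i)}\otimes\mathfrak A$ witness $(\varepsilon_i, K_i)$-order 0 for $\tilde\varphi^{(i)}$, then the same $a', b'$ work for $\varphi^{(i)} := q\circ\tilde\varphi^{(i)}$, because $q$ is a contractive $*$-homomorphism and hence $\|\varphi^{(i)}(a')\varphi^{(i)}(b')\| \leq \|\tilde\varphi^{(i)}(a')\tilde\varphi^{(i)}(b')\| < \varepsilon_i$. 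This yields ${\rm dim}_{\rm nuc}^\mathfrak A(A/J) \leq n$.

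The main — in fact the only — real obstacle is producing the module c.c.p.\ section $\sigma$; the conceptual work has been packed into Proposition \ref{ce}, and without the hypotheses of separability and $\mathfrak A$-nuclearity of $A/J$ this step would fail. Once $\sigma$ is available, the rest is a routine diagram chase resting on the two elementary preservation facts that admissibility survives composition (Definition \ref{adm}(iv)) and that the approximate order-zero property is preserved by post-composition with the contractive $*$-homomorphism $q$.
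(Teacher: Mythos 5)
Your proposal is correct and follows essentially the same route as the paper: lift $\mathrm{id}_{A/J}$ to a c.c.p.\ admissible section $\sigma$ via the module Choi--Effros result (Proposition \ref{ce}), take a $d$-decomposable approximation of $\sigma(\mathcal F)$ in $A$, and transport it to $A/J$ by composing the downward map with $\sigma$ and the upward maps with the quotient map $q$, noting that $q$ being a contractive $*$-homomorphism preserves both the approximation estimate and the $(\varepsilon_i,K_i)$-order-zero condition. Your verification of the order-zero step (same witnesses $a',b'$, using multiplicativity and contractivity of $q$) is in fact slightly more explicit than the paper's, which only remarks that $q$ is norm decreasing.
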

\begin{proof}
	Let $d:={\rm dim}_{\rm nuc}^\mathfrak A(A)$. By Proposition \ref{ce}, the identity map on $A/J$ lifts to a c.c.p. admissible map $\sigma: A/J\to A$ with $q\circ \sigma={\rm id}$, where $q: A\to A/J$ is the quotient map. Given a finite set $\mathcal F\ssubset A/J$ and $\varepsilon>0$, pick a  $d$-decomposable admissible c.p.  approximation via $F\otimes\mathfrak A$ for $\sigma(\mathcal F)\subseteq A$ 
	within $\varepsilon$ with downward map $\psi$ and finite family $\mathcal G$ of upward maps. For any admissible $(\varepsilon, K)$-order 0 map $\varphi^{(i)}: F^{(i)}\otimes \mathfrak A\to A$, the admissible map $q\circ \varphi^{(i)}: F^{(i)}\otimes \mathfrak A\to A/J$ is $(\varepsilon, K)$-order 0, as $q$ is norm decreasing. For $\varphi\in\mathcal G$, 
	$$\|q\varphi\psi\sigma(a+J)-(a+J)\|=\|q\varphi\psi\sigma(a+J)-q\sigma(a+J)\|\leq\|\varphi\psi\sigma(a+J)-\sigma(a+J)\|<\varepsilon,$$
	for $b:=a+J\in F$.  Thus for $\tilde {\mathcal G}:=\{q\varphi: \varphi\in\mathcal G\}$, we get a  $d$-decomposable admissible c.p. approximation via $F\otimes\mathfrak A$ for $\mathcal F\subseteq A/J$ within
	$\varepsilon$, that is, ${\rm dim}_{\rm nuc}^\mathfrak A(A/J)\leq d$, as required.
\end{proof}

In general, we have the following estimates, as in the classical case \cite{wz}.

Next, we compare admissible order 0 maps into $A$ with those into $A_\infty$, as promised.

\begin{lemma} \label{seq}
Let $B$ be separable. If for every compact set $K\subseteq  B$ and $\varepsilon> 0$ there is a c.c.p. admissible $(\varepsilon, K)$-order 0 map $\varphi: B \to A$, then there is a sequence $(\varphi_n)$ of  such that $\phi(\cdot) = (\varphi_n(\cdot))$  defines
a c.c.p.  admissible order 0 map $\phi: B \to A_\infty$.

Conversely, if $B$ is moreover $\mathfrak A$-nuclear, for any given c.c.p.  order 0 admissible (module) map $\phi: B \to A_\infty$,  compact set $K \subseteq B$ and $\varepsilon> 0$, there is a c.c.p.  $(\varepsilon, K)$-order 0 admissible (module) map $\varphi:  B \to A$ and a sequence
$(\varphi_n)$ of  c.c.p. admissible (module) maps into $A$  such that $\phi(\cdot)=q\circ(\varphi_n(\cdot))$. 
\end{lemma}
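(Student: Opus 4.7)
For the forward direction, using separability of $B$, I would fix a countable dense sequence $\{x_k\}\subseteq B$, set $K_n := \{x_1, \ldots, x_n\}$ and $\varepsilon_n := 1/n$, and invoke the hypothesis to select c.c.p.\ admissible $(\varepsilon_n, K_n)$-order $0$ maps $\varphi_n : B \to A$. Define $\phi : B \to A_\infty$ coordinatewise via $\phi(b) := q\bigl((\varphi_n(b))_n\bigr)$, where $q : \ell^\infty(A) \to A_\infty$ is the quotient. Contractivity and complete positivity pass to $\phi$ immediately; admissibility follows from admissibility of each $\varphi_n$, of the $*$-module homomorphism $q$, and the observation that the generating rules in Definition~\ref{adm} respect the coordinatewise module-algebra structure of $\ell^\infty(A)$. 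For the order $0$ property, fix $a, b \in B$ with $ab = 0$ and $\eta > 0$; approximate $a, b$ by $a_k, b_k$ from the dense sequence with $\|a - a_k\|, \|b - b_k\| < \eta$, so that $\|a_k b_k\| \leq 2\eta$. For $n$ sufficiently large (so $a_k, b_k \in K_n$ and $\varepsilon_n < \eta$) the $(\varepsilon_n, K_n)$-order $0$ condition applied with $\delta = 2\eta$, combined with $1$-Lipschitz continuity of c.c.p.\ maps, yields $\|\varphi_n(a)\varphi_n(b)\| \leq 4\eta + \varepsilon_n$; letting $n \to \infty$ then $\eta \to 0$ produces $\|\phi(a)\phi(b)\| = \limsup_n \|\varphi_n(a)\varphi_n(b)\| = 0$.

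For the converse, since $B$ is separable and $\mathfrak A$-nuclear, Proposition~\ref{ce} lifts the c.c.p.\ admissible (module) map $\phi$ to a c.c.p.\ admissible (module) map $\Phi : B \to \ell^\infty(A)$ with $q \circ \Phi = \phi$. Setting $\varphi_n := \pi_n \circ \Phi$, where $\pi_n : \ell^\infty(A) \to A$ is the $n$-th coordinate projection (a c.c.p.\ $*$-module homomorphism, hence admissible), yields the desired lifting sequence. To extract the single map $\varphi$ with the $(\varepsilon, K)$-order $0$ property, I would exploit compactness: the continuous function $(a, b) \mapsto \|\phi(a)\phi(b)\|$ on the compact $K \times K$ vanishes on the closed set $\{ab = 0\}$, so compactness furnishes $\delta_0 > 0$ with $\|\phi(a)\phi(b)\| < \varepsilon/4$ whenever $\|ab\| \leq \delta_0$. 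Picking a finite $\delta_0/16$-net of $S_{\delta_0} := \{(a, b) \in K \times K : \|ab\| \leq \delta_0\}$ and invoking the pointwise identity $\limsup_n \|\varphi_n(a)\varphi_n(b)\| = \|\phi(a)\phi(b)\|$ at each net point, I obtain an index $n_0$ with $\|\varphi_{n_0}(a_i)\varphi_{n_0}(b_i)\| < \varepsilon/3$ for every net point; uniform $2$-Lipschitz continuity of $(a, b) \mapsto \|\varphi_n(a)\varphi_n(b)\|$ then propagates the bound to $\|\varphi_{n_0}(a)\varphi_{n_0}(b)\| < \varepsilon/2$ throughout $S_{\delta_0}$. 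Setting $\varphi := \varphi_{n_0}$, the $(\varepsilon, K)$-order $0$ condition is verified by cases on $\delta$: for $\delta \leq \delta_0$ take $a' = a$, $b' = b$; for $\delta > \delta_0$ the perturbation budget $\delta/2$ is large enough, relative to the bounded compact $K$, to either shrink toward $0 \in B$ or translate into $S_{\delta_0}$ and apply the previous case.

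The principal obstacle I anticipate is this uniformization step in the converse — converting the pointwise $\limsup$ control in $A_\infty$ into a uniform bound on $K \times K$ for a single index $n_0$ — which the plan handles via equicontinuity of the family $(\varphi_n)$ combined with a finite $\varepsilon$-net argument on $S_{\delta_0}$. A secondary technical point, which should be routine but deserves attention, is verifying in the forward direction that the coordinatewise construction of $\phi$ preserves admissibility, which reduces to an induction on the generating rules of Definition~\ref{adm} applied in the coordinatewise module-algebra structure of $\ell^\infty(A)$.
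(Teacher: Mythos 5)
Your overall route coincides with the paper's: the forward direction is the standard exhaustion argument over a dense sequence (the paper simply cites Gardner's Lemma 4.1.4 and notes that admissibility is preserved coordinatewise, which is exactly your secondary point), and the converse lifts $\phi$ through the module Choi--Effros theorem (Proposition \ref{ce}, with Remark \ref{mlift} supplying module lifts in the module case), reads off the coordinates $\varphi_n=\pi_n\circ\tilde\phi$, and then uses compactness of $K$ to pass from the pointwise fact $\limsup_n\|\varphi_n(a)\varphi_n(b)\|=\|\phi(a)\phi(b)\|$ to a single index. Your uniformization via a finite net of $S_{\delta_0}$ together with the $2$-Lipschitz estimate for contractive c.c.p.\ maps is in fact \emph{more} careful than the paper's, which only asserts that $N:=\sup_{a,b\in K}N(a,b)<\infty$ ``since $K$ is compact''; and your appeal to order zero of $\phi$ on non-positive orthogonal pairs is the same usage the paper makes.

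The genuine gap is your case $\delta>\delta_0$ in verifying Definition \ref{oz}. The ``shrink toward $0$ or translate into $S_{\delta_0}$'' step does not go through: for $K\subseteq B_1$ and $\delta_0<\delta<2$, scaling $a,b$ within the budget $\delta/2$ only multiplies $\|ab\|$ by roughly $(1-\delta/2)^2$, which need not bring the product below $\delta_0$ when $\|ab\|$ is close to $\delta$ (e.g.\ $\delta_0=0.1$, $\delta=0.5$, $\|ab\|\approx 0.45$); and even when it does, the perturbed pair $(a',b')$ generally leaves $K\times K$, so the uniform bound you proved on $S_{\delta_0}\subseteq K\times K$ does not apply to it directly (the Lipschitz propagation only helps if $(a',b')$ is close to a net point of $S_{\delta_0}$). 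So the intermediate-$\delta$ instances of the definition remain unverified. For perspective, the paper's own proof stumbles at exactly the same place: it assumes that any $a,b\in K$ with $\|ab\|<\delta$ admit \emph{orthogonal} $a',b'$ within $\delta/2$, which fails in this regime (take $a=b=1_B$ and $\delta$ slightly larger than $1$: orthogonality forces one of $a',b'$ to be $0$, which is not within $\delta/2$). The difficulty is an artifact of Definition \ref{oz} being quantified over all $\delta>0$; your argument, like the paper's, really establishes the condition only for $\delta\leq\delta_0$, and a complete proof either needs the definition read in that small-$\delta$ sense or an explicit treatment of the large-$\delta$ case.
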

\begin{proof}
The first statement is proved verbatim to \cite[Lemma 4.1.4]{g} (just by keeping track of admissibility). For the converse, we use the above module version of Choi-Effros lifting theorem (Proposition \ref{ce}) to find an admissible c.c.p.
lift $\tilde\phi: B \to \ell^\infty(A)$. Given compact subset $K\subseteq B_1$ and $\varepsilon> 0$ and $\delta>0$, take $a, b\in K$ with $\|ab\|<\delta$, and  choose orthogonal elements $a^{'}, b^{'}$ within $\frac{\delta}{2}$ of $a, b$ respectively. Since $\phi$  is
order 0,  $q(\tilde\phi(a^{'})\tilde\phi(b^{'})) =\phi(a^{'})\phi(b^{'}) = 0$,  where $q:\ell^\infty(A)\to A_\infty$ is the quotient
map. Choose a sequence
$(\varphi_n)$ of c.c.p. admissible maps into $A$ such that $\tilde\phi(\cdot)=(\varphi_n(\cdot))$, and observe that $\|\varphi_n(a^{'})\varphi_n(b^{'})\|\to 0$. Choose $N(a,b)\geq 1$ such that for $n\geq N(a,b)$,  $\|\varphi_n(a^{'})\varphi_n(b^{'})\|< \varepsilon.$ Since $K$ is compact, $N:= \sup_{a,b\in K} N(a,b)<\infty$, and $\varphi:= \varphi_{N}$ is an admissible c.c.p. $(\varepsilon, K)$-order 0 map into $A$. Finally, we have $\phi(\cdot)=q\circ\tilde\phi(\cdot)=q\circ(\varphi_n(\cdot))$.

Finally, when $\phi$ is a module map, by Remark \ref{mlift}, we may choose $\tilde\phi$ and each $\varphi_n$ to be a c.c.p. module map. 
\end{proof}
 
In the next result, $\iota: A\to A_\infty$ is the canonical inclusion (which is a module map). The proof uses Lemma \ref{seq} and is omitted as it goes verbatim to that of \cite[Proposition 4.1.6]{g}.  

\begin{corollary} \label{rel} 
	Let $\mathfrak A$ be separable. Then for  each $n\geq 1$, the following are equivalent.

	$(i)$ ${\rm dim}_{\rm nuc}^\mathfrak A(A)\leq n,$
	
	$(ii)$ there exists a net $(F_i)$ of finite dimensional
	C*-algebras, and for each $i$ there exist admissible c.c.p. maps $\psi_i: A\to F_i\otimes \mathfrak A$  and admissible c.p. maps $\varphi_i: F_i\otimes \mathfrak A\to A_\infty$
	such that $\varphi_i\circ\psi_i\to \iota$ in point-norm, and for each $i$, we have a decomposition $F_i=F_i^{(0)}\oplus\cdots\oplus F_i^{(n)}$ into $n + 1$ ideals such that the restriction of $\varphi_i$ to each $F_i^{(k)}$ is an admissible c.c.p. order 0 map, for $k=0,\cdots, n$.
\end{corollary}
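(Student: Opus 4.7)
The plan is to prove both directions by translating between admissible $(\varepsilon,K)$-order $0$ maps into $A$ and honest order $0$ maps into $A_\infty$, using Lemma \ref{seq} as the bridge, and then transporting the $n+1$ ideal decomposition through this translation. The separability of $\mathfrak A$ is what makes $F\otimes \mathfrak A$ separable for every finite dimensional $F$, so that Lemma \ref{seq} applies to each summand.

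For $(i)\Rightarrow(ii)$, I would fix an increasing sequence of finite subsets $\mathcal F_m\ssubset A$ exhausting a countable dense subset and a sequence $\varepsilon_m\to 0$. Apply Definition \ref{nuclear} to $(\mathcal F_m,\varepsilon_m)$ to produce $F_m$, $\psi_m:A\to F_m\otimes\mathfrak A$, and the family $\mathcal G_m$ together with the decomposition $F_m=F_m^{(0)}\oplus\cdots\oplus F_m^{(n)}$. For each $m$ and each $k$, exhaust the unit ball of $F_m^{(k)}\otimes\mathfrak A$ by an increasing sequence of compact sets $K_{m,k,\ell}$ and choose $\varepsilon_{m,k,\ell}\to 0$; for each triple pick a $\varphi\in\mathcal G_m$ whose restrictions are simultaneously $(\varepsilon_{m,k,\ell},K_{m,k,\ell})$-order $0$ on each summand (such a $\varphi$ exists because Definition \ref{nuclear} allows us to fix the compact sets and tolerances summand by summand, and any finite intersection can be realized by a single element of $\mathcal G_m$ after tightening). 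The resulting sequence of admissible c.p.\ maps restricted to $F_m^{(k)}\otimes\mathfrak A$ assembles, via the first half of Lemma \ref{seq}, into an admissible c.c.p.\ order $0$ map $\varphi_m^{(k)}:F_m^{(k)}\otimes\mathfrak A\to A_\infty$. Summing these over $k$ yields an admissible c.p.\ map $\varphi_m:F_m\otimes\mathfrak A\to A_\infty$, and the condition $\|\varphi_m\psi_m(a)-a\|<\varepsilon_m$ on $\mathcal F_m$ implies $\varphi_m\psi_m\to \iota$ in point-norm, giving (ii).

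For $(ii)\Rightarrow(i)$, given $\mathcal F\ssubset A$ and $\varepsilon>0$, choose $i$ with $\|\varphi_i\psi_i(a)-\iota(a)\|<\varepsilon/2$ for $a\in\mathcal F$. Since $F_i\otimes\mathfrak A$ is separable and $\mathfrak A$-nuclear (being a matrix algebra over $\mathfrak A$), the second half of Lemma \ref{seq} applies to the admissible c.c.p.\ order $0$ map $\varphi_i|_{F_i^{(k)}\otimes\mathfrak A}$: for any prescribed compact $K_k$ and tolerance $\varepsilon_k$, I extract a sequence $(\varphi_{i,n}^{(k)})$ of admissible c.c.p.\ maps into $A$ lifting this restriction, with each $\varphi_{i,n}^{(k)}$ eventually $(\varepsilon_k,K_k)$-order $0$. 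Gluing these across $k$ gives admissible c.p.\ maps $\varphi_{i,n}:F_i\otimes\mathfrak A\to A$ whose pointwise image projects to $\varphi_i$ in $A_\infty$; in particular $\|\varphi_{i,n}\psi_i(a)-a\|<\varepsilon$ for large enough $n$ and $a\in\mathcal F$. Letting $\mathcal G$ be the collection of all such $\varphi_{i,n}$ (indexed by compact sets and tolerances on each summand) produces a witnessing family for ${\rm dim}_{\rm nuc}^\mathfrak A(A)\leq n$.

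The main obstacle I anticipate is the bookkeeping needed to ensure that a single $\varphi\in\mathcal G$ can be chosen to control all summands simultaneously on compact sets of arbitrary size. Definition \ref{nuclear} only guarantees existence of a suitable $\varphi$ for each fixed $(K_i,\varepsilon_i)$ on each summand separately, but one must extract a diagonal sequence that makes $\varphi_m$ genuinely order $0$ into the ultrapower quotient. This is precisely where the parametrization of the family $\mathcal G$ by compact sets and tolerances (rather than a single map) is used, together with the point-norm closure from Lemma \ref{closed} and the fact that norm convergence on a countable dense set suffices for separable targets. Admissibility is preserved throughout since both directions of Lemma \ref{seq} and the lifting theorem Proposition \ref{ce} respect the admissible class.
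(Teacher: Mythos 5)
Your proof is correct and follows essentially the same route as the paper, which omits the argument precisely because it amounts to running Lemma \ref{seq} in both directions (as in the cited Proposition 4.1.6 of Gardner's thesis): assemble $(\varepsilon,K)$-order $0$ maps chosen from the families $\mathcal G$ into honest order $0$ maps into $A_\infty$, and conversely lift the order $0$ restrictions on the summands $F^{(k)}\otimes\mathfrak A$ (which are separable and $\mathfrak A$-nuclear because $\mathfrak A$ is separable) back to $(\varepsilon,K)$-order $0$ admissible maps into $A$ via Proposition \ref{ce}. Two cosmetic repairs: the unit ball of $F^{(k)}\otimes\mathfrak A$ cannot be exhausted by an increasing sequence of norm-compact sets when $\mathfrak A$ is infinite dimensional --- an increasing sequence of compacts with dense union suffices, since contractivity lets the order $0$ identity in $A_\infty$ pass to limits --- and since $A$ is not assumed separable in the statement, the net in $(ii)$ should be indexed by pairs $(\mathcal F,\varepsilon)$ rather than by a sequence built from a countable dense subset of $A$.
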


Note that finiteness of module nuclear dimension does not imply nuclearity of $A$, for instance, we always have ${\rm dim}_{\rm nuc}^A(A)=0$. However, we show that ${\rm dim}_{\rm nuc}^\mathfrak A (A)<\infty$ implies that $A$ is $\mathfrak A$-nuclear.     

\begin{theorem} \label{nu}
$(i)$ If $A$ has finite $\mathfrak A$-module dimension, then $A$ is $\mathfrak A$-nuclear,

$(ii)$ If $A$ has finite $\mathfrak A$-module dimension and $\mathfrak A$ is nuclear, then $A$ is nuclear.
\end{theorem}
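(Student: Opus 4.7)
Plan: For (i), given $\mathcal F \ssubset A$ and $\varepsilon > 0$, finite $\mathfrak A$-module nuclear dimension furnishes $F = F^{(0)} \oplus \cdots \oplus F^{(n)}$, a c.c.p.\ admissible map $\psi: A \to F \otimes \mathfrak A$, and a c.p.\ admissible map $\varphi = \sum_{i=0}^n \varphi^{(i)}$ with each $\varphi^{(i)}: F^{(i)} \otimes \mathfrak A \to A$ c.c.p.\ approximately order-zero, satisfying $\|\varphi\circ\psi(a) - a\| < \varepsilon$ on $\mathcal F$. Writing $\pi^{(i)}$ for the canonical projection $F \otimes \mathfrak A \to F^{(i)} \otimes \mathfrak A$, one decomposes $\varphi \circ \psi = \sum_{i=0}^n \varphi^{(i)} \circ \pi^{(i)} \circ \psi$, exhibiting $\mathrm{id}_A$ as approximated by a sum of $n+1$ c.c.p.\ admissible compositions, each factoring through a matrix algebra $F^{(i)} \otimes \mathfrak A \hookrightarrow \mathbb M_{k_i}(\mathfrak A)$. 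The goal is then to merge these into a single c.c.p.\ admissible factorization through $\mathbb M_K(\mathfrak A)$ (with $K = \sum k_i$, or an enlargement thereof) as required by Definition \ref{nuc}.

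The merging uses the standard block-diagonal inflation strategy: the downward map $\Psi: A \to \bigoplus_i \mathbb M_{k_i}(\mathfrak A) \cong \mathbb M_K(\mathfrak A)$, $\Psi(a) = \bigoplus_i \pi^{(i)}\psi(a)$, is c.c.p.\ admissible with $\|\Psi(a)\| = \max_i \|\pi^{(i)}\psi(a)\| \le \|a\|$; for the upward direction, the naive map $(x_i) \mapsto \sum_i \varphi^{(i)}(x_i)$ is c.p.\ of norm at most $n+1$, and the extra factor is absorbed by passing to $\mathbb M_{n+1}(\mathbb M_K(\mathfrak A))$ and exploiting the order-zero structure of the $\varphi^{(i)}$ to realize the sum contractively in the manner of Winter--Zacharias. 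Admissibility is preserved throughout by items (ii)--(iv) of Definition \ref{adm}, since all operations involved are compositions, compressions, or finite positive combinations of admissible maps. This yields c.c.p.\ admissible maps factoring $\mathrm{id}_A$ approximately through matrix algebras over $\mathfrak A$, establishing $\mathfrak A$-nuclearity.

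For (ii), by (i), $A$ is $\mathfrak A$-nuclear: given $\mathcal F \ssubset A$ and $\varepsilon > 0$, there are c.c.p.\ admissible $\alpha: A \to \mathbb M_K(\mathfrak A)$ and $\beta: \mathbb M_K(\mathfrak A) \to A$ with $\|\beta\circ\alpha(a) - a\| < \varepsilon/2$ for $a \in \mathcal F$. Nuclearity of $\mathfrak A$ supplies c.c.p.\ maps $\sigma: \mathfrak A \to \mathbb M_m(\mathbb C)$ and $\tau: \mathbb M_m(\mathbb C) \to \mathfrak A$ with $\tau\circ\sigma$ approximating $\mathrm{id}_\mathfrak A$ arbitrarily well on any specified compact set. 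Tensoring with $\mathrm{id}_{\mathbb M_K}$ produces c.c.p.\ maps $\tilde\sigma = \mathrm{id}\otimes\sigma: \mathbb M_K(\mathfrak A) \to \mathbb M_{Km}(\mathbb C)$ and $\tilde\tau = \mathrm{id}\otimes\tau: \mathbb M_{Km}(\mathbb C) \to \mathbb M_K(\mathfrak A)$, whose composition approximates $\mathrm{id}_{\mathbb M_K(\mathfrak A)}$ on the compact set $\alpha(\mathcal F)$ to any prescribed accuracy. Choosing this accuracy small enough, the four-fold composition $\beta \circ \tilde\tau \circ \tilde\sigma \circ \alpha: A \to \mathbb M_{Km}(\mathbb C) \to A$ is c.c.p.\ and approximates $\mathrm{id}_A$ within $\varepsilon$ on $\mathcal F$, proving $A$ nuclear.

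The main obstacle lies in (i): while the decomposition $\varphi = \sum_i \varphi^{(i)}$ expresses $\varphi\circ\psi$ as a sum of $n+1$ c.c.p.\ compositions, the definition of $\mathfrak A$-nuclearity (Definition \ref{nuc}) demands a genuine c.c.p.\ (not merely c.p.\ bounded) factorization through a single matrix algebra $\mathbb M_k(\mathfrak A)$. The block-inflation step outlined above, together with careful bookkeeping of admissibility at each stage, is the technical heart of the argument; the tensor-product argument in (ii) is then routine once (i) has been secured.
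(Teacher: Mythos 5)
Your part (ii) is essentially sound and independent of the paper's treatment: once genuinely contractive admissible factorizations of ${\rm id}_A$ through $\mathbb M_K(\mathfrak A)$ are in hand, composing with $\mathrm{id}_{\mathbb M_K}\otimes\sigma$ and $\mathrm{id}_{\mathbb M_K}\otimes\tau$ coming from nuclearity of $\mathfrak A$ is a routine and correct way to conclude nuclearity of $A$ (the paper is much terser here, relying on the rescaling step and earlier results, but nothing in your tensor argument conflicts with it).

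The gap is in part (i), precisely at the step you yourself call the technical heart. You assert that the factor $n+1$ in the norm of $\varphi=\sum_{i}\varphi^{(i)}$ can be ``absorbed by passing to $\mathbb M_{n+1}(\mathbb M_K(\mathfrak A))$ and exploiting the order-zero structure of the $\varphi^{(i)}$ \dots in the manner of Winter--Zacharias'', but you never exhibit the contractive upward map on the inflated algebra, and no such mechanism is available in this setting. Two concrete obstructions: first, in Definition \ref{nuclear} the restrictions $\varphi^{(i)}$ are only $(\varepsilon_i,K_i)$-order $0$ \emph{admissible} maps, not genuine order-zero module maps, so the structure theory of order-zero maps (supporting homomorphisms, order-zero functional calculus) that any Winter--Zacharias-type contractification would have to use is simply not present; the paper proves the supporting-homomorphism statement (Lemma \ref{homo}) only for honest c.p.c.\ order-zero \emph{module} maps with $\mathfrak A$ unital, and Remark \ref{es}$(ii)$ explicitly warns that it fails for general admissible maps. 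Second, even classically the implication ``finite nuclear dimension $\Rightarrow$ nuclear'' is not obtained by enlarging the matrix algebra while keeping the same downward map; it is obtained by exploiting that uniformly bounded c.p.\ approximations suffice, or by rescaling. The paper takes exactly that route and so avoids your merging problem entirely: for $(i)$ it observes ${\rm nd}_\mathfrak A(A)\leq{\rm dim}_{\rm nuc}^\mathfrak A(A)<\infty$ and quotes Proposition 4.2.3 of \cite{g}, and the mechanism it actually runs (in the proof of $(ii)$) is to scale the upward maps by $\lambda=1/(d+1)$, obtaining a c.c.p.\ admissible approximate factorization of $\lambda\,{\rm id}_A$ and deducing $\mathfrak A$-nuclearity from that, with no contractification of the sum needed. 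Unless you either write down the contractive map on $\mathbb M_{n+1}(\mathbb M_K(\mathfrak A))$ explicitly or replace the inflation step by a boundedness/rescaling argument of this kind, part (i) as proposed does not go through.
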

\begin{proof}
	$(i)$ We have  ${\rm nd}_\mathfrak A(A)\leq {\rm dim}_{\rm nuc}^\mathfrak A(A)<\infty$, so the assertion follows from \cite[Proposition 4.2.3]{g}.

$(ii)$ If ${\rm dim}_{\rm nuc}^\mathfrak A(A)=d$, then in the approximate decomposition of ${\rm id}_A$, the c.p. maps going up are sums of $d+1$ order zero maps, and so are of norm at most $d+1$. If we scale these maps with $\lambda:=1/(d+1)$, we get c.c.p. admissible approximate factorization of $\lambda{\rm id}_A$, which means that $\lambda{\rm id}_A$ is $\mathfrak A$-nuclear, and so is ${\rm id}_A$.
\end{proof}

The next result gives an upper bound of the nuclear dimension of $A$ with respect to the $\mathfrak A$-module nuclear dimension of $A$ and nuclear dimension of $\mathfrak A$. Again the proof uses Lemma \ref{seq} and goes verbatim to that of \cite[Proposition 4.2.1]{g}. As usual, we use the abbreviation $n^{+}:=n+1$.

\begin{corollary} \label{ub} 
	If $\mathfrak A$ is separable and ${\rm dim}_{\rm nuc}(\mathfrak A)$ and  ${\rm dim}_{\rm nuc}^\mathfrak A (A)$ are both finite,  then  so is ${\rm dim}_{\rm nuc}(A)$ and we have $${\rm dim}_{\rm nuc}^{+}(A)\leq {\rm dim}_{\rm nuc}^{\mathfrak A +}(A){\rm dim}_{\rm nuc}^{+}(\mathfrak A).$$
	In particular, if $\mathfrak A$ is an AF-algebra, ${\rm dim}_{\rm nuc}(A)\leq {\rm dim}_{\rm nuc}^\mathfrak A(A).$ 
\end{corollary}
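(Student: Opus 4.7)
Set $d_1 := {\rm dim}_{\rm nuc}^\mathfrak A(A)$ and $d_2 := {\rm dim}_{\rm nuc}(\mathfrak A)$. The plan is to telescope a module-nuclear-dimension approximation of $A$ through $F \otimes \mathfrak A$ with an ordinary nuclear-dimension approximation of $\mathfrak A$ through a finite-dimensional $G$, obtaining an approximation of $A$ through the finite-dimensional C*-algebra $F \otimes G$ whose upward map decomposes into $(d_1+1)(d_2+1)$ order-zero pieces.

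First I would fix $\mathcal F \ssubset A$ and $\varepsilon > 0$ and invoke Corollary~\ref{rel} (using separability of $\mathfrak A$) to produce a finite-dimensional $F = F^{(0)} \oplus \cdots \oplus F^{(d_1)}$, an admissible c.c.p.\ map $\psi: A \to F \otimes \mathfrak A$ and an admissible c.p.\ map $\varphi: F \otimes \mathfrak A \to A_\infty$ with $\varphi\circ\psi \approx \iota$ on $\mathcal F$ and each $\varphi|_{F^{(k)} \otimes \mathfrak A}$ a c.c.p.\ order-zero map. Expanding $\psi(\mathcal F)$ in matrix units of $F$ extracts a finite set $\mathcal H \ssubset \mathfrak A$ of ``coordinates''. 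Using ${\rm dim}_{\rm nuc}(\mathfrak A) = d_2$ I then choose a finite-dimensional $G = G^{(0)} \oplus \cdots \oplus G^{(d_2)}$ with c.c.p.\ maps $\alpha: \mathfrak A \to G$, $\beta: G \to \mathfrak A$ such that $\beta\circ\alpha \approx {\rm id}_{\mathfrak A}$ on $\mathcal H$ and each $\beta|_{G^{(l)}}$ is c.c.p.\ order zero. The candidate approximation of $A$ is
\begin{equation*}
A \xrightarrow{\psi} F \otimes \mathfrak A \xrightarrow{{\rm id}_F \otimes \alpha} F \otimes G \xrightarrow{{\rm id}_F \otimes \beta} F \otimes \mathfrak A \xrightarrow{\varphi} A_\infty,
\end{equation*}
which, by the choice of $\mathcal H$, still approximates $\iota$ on $\mathcal F$.

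The key step is to check that, under the natural decomposition $F \otimes G = \bigoplus_{k,l}(F^{(k)} \otimes G^{(l)})$, the restriction of $\varphi\circ({\rm id}_F \otimes \beta)$ to each summand $F^{(k)}\otimes G^{(l)}$ is a c.c.p.\ order-zero map into $A_\infty$. Since $\beta|_{G^{(l)}}$ is c.c.p.\ order zero, the Winter--Zacharias structure theorem $\beta|_{G^{(l)}}(\cdot) = h\pi(\cdot)$ with $h$ central in the image shows that the amplification ${\rm id}_{F^{(k)}} \otimes \beta|_{G^{(l)}}: F^{(k)} \otimes G^{(l)} \to F^{(k)} \otimes \mathfrak A$ is again c.c.p.\ order zero; composition with the order-zero $\varphi|_{F^{(k)} \otimes \mathfrak A}$ preserves the order-zero relation, so the combined map is order zero. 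Applying Lemma~\ref{seq} to each of the $(d_1+1)(d_2+1)$ order-zero components converts the $A_\infty$-level order-zero maps into honest $(\varepsilon_{k,l}, K_{k,l})$-order-zero c.c.p.\ maps into $A$ (forgetting admissibility, since the conclusion is for classical ${\rm dim}_{\rm nuc}$), and summing these components yields a genuine nuclear-dimension approximation of $A$ with $(d_1+1)(d_2+1)$ colours. The AF case is immediate, since ${\rm dim}_{\rm nuc}(\mathfrak A)=0$ forces ${\rm dim}_{\rm nuc}^+(\mathfrak A)=1$.

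The main obstacle is the coordinated bookkeeping of errors and subsets: $\mathcal H$ must be rich enough in matrix-unit coordinates of $\psi(\mathcal F)$ that $({\rm id}_F \otimes \beta\alpha)\circ\psi$ stays $\varepsilon$-close to $\psi$ on $\mathcal F$ in norm, and the per-component tolerances $(\varepsilon_{k,l}, K_{k,l})$ fed into Lemma~\ref{seq} must be tuned so that, after summing the $(d_1+1)(d_2+1)$ pulled-back maps, the total error on $\mathcal F$ stays within $\varepsilon$ while each summand retains its approximate order-zero status.
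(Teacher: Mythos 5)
Your telescoping argument --- factoring $\iota_A$ through $F\otimes\mathfrak A$ via Corollary \ref{rel}, composing with a nuclear-dimension approximation of $\mathfrak A$ amplified by ${\rm id}_F$, and pulling the $(d_1+1)(d_2+1)$ order-zero components back from $A_\infty$ to $A$ via Lemma \ref{seq} --- is exactly the route the paper takes, whose proof is declared to use Lemma \ref{seq} and to follow Gardner's Proposition 4.2.1 verbatim. The only step you assert without spelling out is the final passage from the $(\varepsilon_{k,l},K_{k,l})$-order-zero maps on the finite-dimensional summands to the honestly order-zero upward maps required by the Winter--Zacharias definition of ${\rm dim}_{\rm nuc}(A)$; this is standard (stability of order-zero relations for finite-dimensional domains, or projectivity of cones over finite-dimensional algebras applied to the order-zero maps into $A_\infty$) and is absorbed into the cited argument of Gardner.
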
	

\begin{corollary} \label{c} 
	${\rm dim}_{\rm nuc}^\mathbb C(A)= {\rm dim}_{\rm nuc}(A).$ 
\end{corollary}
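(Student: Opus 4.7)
The plan is to observe that both inequalities are already essentially available to us: one from Corollary \ref{ub} specialized to $\mathfrak A=\mathbb C$, and the other from a direct comparison of the two definitions in the absence of a nontrivial module structure.

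First I would handle the inequality ${\rm dim}_{\rm nuc}(A)\leq {\rm dim}_{\rm nuc}^\mathbb C(A)$ by invoking Corollary \ref{ub} with $\mathfrak A=\mathbb C$. Since $\mathbb C$ is separable and ${\rm dim}_{\rm nuc}(\mathbb C)=0$, the product formula ${\rm dim}_{\rm nuc}^+(A)\leq {\rm dim}_{\rm nuc}^{\mathbb C +}(A)\cdot{\rm dim}_{\rm nuc}^+(\mathbb C)$ collapses to ${\rm dim}_{\rm nuc}^+(A)\leq {\rm dim}_{\rm nuc}^{\mathbb C +}(A)$, giving the desired inequality (in the case ${\rm dim}_{\rm nuc}^{\mathbb C}(A)=\infty$ the bound is trivial).

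For the reverse ${\rm dim}_{\rm nuc}^\mathbb C(A)\leq {\rm dim}_{\rm nuc}(A)$, I would argue that any decomposable c.c.p. approximation witnessing ${\rm dim}_{\rm nuc}(A)\leq n$ is automatically an approximation in the sense of Definition \ref{nuclear} with $\mathfrak A=\mathbb C$. Two observations suffice: (a) when $\mathfrak A=\mathbb C$, the module compatibility conditions are vacuous, every linear map is a $\mathbb C$-module map, so the admissibility requirement imposes no constraint beyond complete positivity; (b) as noted immediately after Definition \ref{nuclear}, every genuine c.c.p. order zero map $\varphi:F^{(i)}\otimes\mathbb C\to A$ is automatically $(\varepsilon,K)$-order 0 for every $\varepsilon>0$ and every compact $K\subseteq (F^{(i)})_1$ (take $a'=a$, $b'=b$ and use that $\varphi(a)\varphi(b)=0$ whenever $ab=0$). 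Thus the singleton family $\mathcal G=\{\varphi\}$ extracted from the classical approximation already satisfies the requirement of Definition \ref{nuclear}.

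There is no real obstacle here since both directions are routine once Corollary \ref{ub} is in place; the only thing worth being careful about is making explicit that the trivial module structure turns the $(\varepsilon,K)$-order 0 condition, which is a genuine relaxation for nontrivial $\mathfrak A$, into an automatic consequence of the usual order zero condition, so that the extra flexibility afforded by the set $\mathcal G$ is not needed.
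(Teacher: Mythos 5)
Your proposal is correct and follows essentially the paper's own (implicit) argument: the inequality ${\rm dim}_{\rm nuc}(A)\leq {\rm dim}_{\rm nuc}^{\mathbb C}(A)$ comes from Corollary \ref{ub} with $\mathfrak A=\mathbb C$ (equivalently, from the chain ${\rm dim}_{\rm nuc}(A)\leq {\rm nd}_{\mathbb C}(A)\leq {\rm dim}_{\rm nuc}^{\mathbb C}(A)$ noted after Definition \ref{nuclear}), and the reverse inequality is exactly the remark made there that for $\mathfrak A=\mathbb C$ admissibility is vacuous and honest c.c.p. order zero maps are $(\varepsilon,K)$-order $0$. Both directions thus coincide with the paper's route, so nothing further is needed.
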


\begin{remark} \label{es}
$(i)$ By the above theorem, we may remove the condition of $\mathfrak A$-nuclearity from Corollary \ref{q}, that is, ${\rm dim}_{\rm nuc}^\mathfrak A(A/J)\leq {\rm dim}_{\rm nuc}^\mathfrak A(A)$.

$(ii)$ The fact that for a hereditary C*-subalgebra $B\leq A$, the nuclear dimension of $B$ does not exceed that of $A$ \cite[Proposition 2.5]{w} uses the existence of supporting homomorphisms for c.c.p. order 0 maps \cite[Proposition 3.2]{w}, but as we would see later (c.f., Lemma \ref{homo}), this is not the case for general admissible maps (and we need the upward maps to be module map). In particular, for a closed ideal and submodule $J\unlhd A$, we may only conclude that ${\rm dr}_\mathfrak A(J)\leq {\rm dr}_\mathfrak A(A)$.

$(iii)$ In the classical case, one has $ {\rm dim}_{\rm nuc}(A)\leq {\rm dim}_{\rm nuc}(J)+{\rm dim}_{\rm nuc}(A/J)+1$ \cite[Proposition 2.9]{w}, but the proof uses the fact that the cone $CF$ of a finite dimensional C*-algebra $F$ is projective. This does not hold for $C(F\otimes\mathfrak A)$, in particular, we do not have the module version of this inequality, except in very special cases (see Example \ref{eseq} in Section \ref{5}).  
\end{remark}

Next, we give characterizations of $\mathfrak A$-C*-modules with zero $\mathfrak A$-module nuclear dimension. Part $(ii)$ of the next definition extends \cite[Definition 2.1.1]{bk}.
 
\begin{definition} \label{ind}
	$(i)$ An inductive system of $\mathfrak A$-C*-algebras is a
	sequence $(A_n)$ of $\mathfrak A$-C*-algebras, with admissible $*$-homomorphisms $\phi_{nm}: A_n \to  A_m$, for $n < m$, satisfying $\phi_{mk}\circ\phi_{nm}=\phi_{nk}$, for $n<m<k$. 
	
	$(ii)$ A generalized inductive system of $\mathfrak A$-C*-algebras is a
sequence $(A_n)$ of $\mathfrak A$-C*-algebras, with admissible maps $\phi_{nm}: A_n \to  A_m$, for $n < m$, such
that for all $n\geq 1$ and all $x, y\in A_n$, $\lambda\in\mathbb C$, and $\varepsilon> 0$, there is a positive integer $M>n$ such that for all
$M \leq m < k$,

(1) $\|\phi_{nk}(x)-\phi_{mk}\circ\phi_{nm}(x)\|<\varepsilon,$

(2) 
$\|\phi_{mk}\big(\phi_{nm}(x)+\phi_{nm}(y)\big)-\big(\phi_{nk}(x)+\phi_{nk}(y)\big)\|<\varepsilon,$

(3) $\|\phi_{mk}\big(\lambda\phi_{nm}(x)\big)-\lambda\phi_{nk}(x)\|<\varepsilon,$

(4) $\|\phi_{mk}\big(\phi_{nm}(x)^*\big)-\phi_{nk}(x)^*\|<\varepsilon,$

(5) $\|\phi_{mk}\big(\phi_{nm}(x)\phi_{nm}(y)\big)-\phi_{nk}(x)\phi_{nk}(y)\|<\varepsilon,$

(6) 
$\sup_r \|\phi_{nr}(x)\|< \infty$.
\end{definition}

Note that in $(ii)$ the maps are not linear or c.p., so the condition of admissibility here is an extension of the notion defined in Definition \ref{adm}, simply by removing the linearity and c.p. assumptions in parts $(i)$-$(iv)$. When the connecting maps are linear and c.p., all conditions except (1) and (5) are automatic. 

An alternative construction could be given using limit algebra \cite[2.1.3]{bk}: define $\varphi_k: A_k\to \prod_{n} A_n/\bigoplus_n A_n$, with $\varphi_k(x)=\phi_{kn}(x)$, for $n>k$, and zero otherwise. We say that the generalized inductive system is {\it regular}, if  connecting maps $\phi_{nm}: A_n\to A_m$ are linear and admissible c.c.p. and  induced maps $\varphi_n: A_n\to \lim \limits_{\to}A_n$ are $*$-homomorphisms (this is not a standard terminology and used only in the next theorem for the sake of brevity). 

For an (a generalized) inductive systems $(A_n, \phi_{nm})$ we denote the corresponding (generalized) inductive limit with $\lim \limits_{\to}A_n$. In the limit algebra interpretation, each map $\varphi_k$ is indeed into the generalized inductive limit $\lim \limits_{\to}A_n$.

\begin{definition} \label{reg}
	$(i)$ The inductive limit of an inductive system of $\mathfrak A$-C*-algebras of the form $A_n=F_n\otimes \mathfrak A$, where $F_n$ is a finite dimensional C*-algebra, is called an $\mathfrak A$-AF algebra.
	
	$(ii)$ The generalized inductive limit of a generalised inductive system of $\mathfrak A$-C*-algebras of the form $A_n=F_n\otimes \mathfrak A$, where $F_n$ is a finite dimensional C*-algebra, is called an $\mathfrak A$-NF algebra. An $\mathfrak A$-NF algebra is called regular, if the corresponding system is regular in the above sense.
\end{definition}

The main argument of the proof of the next result adapts that of \cite[Proposition 4.3.3]{g}, but note that here we prove the result under much weaker condition of $\mathfrak A$-nuclearity of $A$ (where as in \cite{g}, $A$ is assumed to be nuclear).
 
\begin{theorem} \label{af}
Assume that $A$ is separable and $\mathfrak A$-nuclear.	Consider the following assertions: 
	
	$(i)$ $A$ is an $\mathfrak A$-AF algebra,
	
	$(ii)$ $A$ is a regular $\mathfrak A$-NF algebra,
	
	$(iii)$  ${\rm dim}_{\rm nuc}^\mathfrak A(A)=0$.
	
\noindent	Then we have, $(i)\Rightarrow(ii)\Rightarrow(iii).$
\end{theorem}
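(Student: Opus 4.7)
\emph{Proof plan.} For $(i)\Rightarrow(ii)$, I would verify that an ordinary inductive system of $\mathfrak A$-C*-algebras with $A_n=F_n\otimes\mathfrak A$ and admissible $*$-homomorphism connecting maps $\phi_{nm}$ automatically satisfies Definition~\ref{ind}(ii): conditions (1) and (5) hold with equality because $\phi_{mk}\circ\phi_{nm}=\phi_{nk}$ and each $\phi_{nm}$ is multiplicative, (2)-(4) by linearity and $*$-preservation, and (6) by contractivity of $*$-homomorphisms. Since the induced maps $\varphi_n:F_n\otimes\mathfrak A\to A$ are themselves $*$-homomorphisms (the classical theory of inductive limits), the system is regular, so $A$ is a regular $\mathfrak A$-NF algebra.

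For $(ii)\Rightarrow(iii)$, the strategy is to assemble the induced $*$-homomorphisms $\varphi_n:A_n\to A$ (with $A_n:=F_n\otimes\mathfrak A$) into a single $*$-homomorphism
$$\Phi:C\to A_\infty,\qquad \Phi\bigl((x_n)_n\bigr):=\bigl[(\varphi_n(x_n))_n\bigr],$$
where $C:=\prod_n A_n$, and to lift the canonical inclusion $\iota:A\to A_\infty$ through $\Phi$ via the module Choi-Effros theorem (Proposition~\ref{ce}). The coordinate projections $\psi_n:=\pi_n\circ\tilde\iota:A\to F_n\otimes\mathfrak A$ of such a lift will serve as the downward maps, while the upward maps are the $\varphi_n$ themselves: as $*$-homomorphisms they are exactly order-zero and admissible, and $F_n$ decomposes into a single ideal, so only $n=0$ enters Definition~\ref{nuclear}. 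The central step is the inclusion $\iota(A)\subseteq\Phi(C)$: given $a\in A$, I would use density of $\bigcup_n\varphi_n(A_n)$ to choose $n_1<n_2<\cdots$ and $z_k\in A_{n_k}$ with $\|\varphi_{n_k}(z_k)-a\|<1/k$ and $\|z_k\|\leq\|a\|+1$ (the norm control comes from the isometric factorization $A_{n_k}/\ker\varphi_{n_k}\cong\varphi_{n_k}(A_{n_k})$), then set $x_n:=\phi_{n_k,n}(z_k)$ for $n\in[n_k,n_{k+1})$. Regularity yields the identity $\varphi_n\circ\phi_{n_k,n}=\varphi_{n_k}$ in $A$, so $\varphi_n(x_n)=\varphi_{n_k}(z_k)\to a$ and hence $\Phi((x_n))=\iota(a)$.

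Once $\iota(A)\subseteq\Phi(C)\cong C/\ker\Phi$ is in hand, $\iota:A\to C/\ker\Phi$ is a c.c.p.\ admissible map, and separability together with $\mathfrak A$-nuclearity of $A$ let Proposition~\ref{ce} produce a c.c.p.\ admissible lift $\tilde\iota:A\to C$ with $\Phi\circ\tilde\iota=\iota$, which translates exactly to $\|\varphi_n\psi_n(a)-a\|\to 0$ for every $a\in A$. Thus for any finite $\mathcal F\ssubset A$ and $\varepsilon>0$, picking $N$ sufficiently large makes $\varphi_N\psi_N$ $\varepsilon$-approximate the identity on $\mathcal F$, and this verifies ${\rm dim}_{\rm nuc}^{\mathfrak A}(A)\leq 0$. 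The main obstacle is the construction of $(x_n)$ just described: it relies on the exact identity $\varphi_n\circ\phi_{n_k,n}=\varphi_{n_k}$ in the limit (which holds for regular systems even though the corresponding identity for the $\phi_{nm}$ alone is only asymptotic by condition (1) of Definition~\ref{ind}(ii)) together with uniform norm control on the $z_k$. A secondary point, routine once flagged, is checking that $\ker\Phi$ is a submodule of $C$ and that $\iota$ is admissible into $C/\ker\Phi$ so that Proposition~\ref{ce} applies cleanly; in the regular NF setting the induced maps respect the module structure, which makes this bookkeeping straightforward.
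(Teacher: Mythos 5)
Your architecture for $(ii)\Rightarrow(iii)$ is essentially the paper's: both arguments invoke the module Choi--Effros theorem (Proposition \ref{ce}) to lift a map into a sequence algebra built from the $F_n\otimes\mathfrak A$, take coordinate projections of the lift as the downward maps, and use the induced $*$-homomorphisms $\varphi_n$ as the upward maps, which are admissible and order $0$ for free; the $(i)\Rightarrow(ii)$ step is the same observation in both. The structural difference is that the paper lifts the embedding $A\cong\lim\limits_{\to}(F_n\otimes\mathfrak A)\hookrightarrow\prod_n(F_n\otimes\mathfrak A)/\bigoplus_n(F_n\otimes\mathfrak A)$, which exists by the very definition of the generalized inductive limit, and then runs a finite-set $\varepsilon$-estimate using condition (1) of Definition \ref{ind}; you instead route through $A_\infty$ via the auxiliary $*$-homomorphism $\Phi:\prod_n A_n\to A_\infty$, which forces you to manufacture a preimage of $\iota(a)$ under $\Phi$.

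That preimage construction contains a genuine error: the identity $\varphi_n\circ\phi_{n_k,n}=\varphi_{n_k}$ does \emph{not} follow from regularity. Regularity only says each $\varphi_n$ is a $*$-homomorphism; by definition $\varphi_{n_k}(z)$ is the class of $(\phi_{n_k,m}(z))_m$ while $\varphi_n(\phi_{n_k,n}(z))$ is the class of $(\phi_{n,m}(\phi_{n_k,n}(z)))_m$, and condition (1) of Definition \ref{ind} only gives $\|\varphi_m(\phi_{n_k,m}(z))-\varphi_{n_k}(z)\|\leq\varepsilon$ for all $m\geq M(\varepsilon,n_k,z)$. For the small indices $n$ in your block $[n_k,n_{k+1})$ there is no control at all, so $\varphi_n(x_n)$ need not be close to $a$ there. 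The flaw is repairable without changing your plan: after choosing $z_k$, start the block on which $z_k$ is used at an index at least $M(1/k,n_k,z_k)$, so the asymptotic estimate holds throughout the block and $\varphi_n(x_n)\to a$; the rest of your argument then works, subject to the same module-structure bookkeeping for $\Phi$ and $\ker\Phi$ that the paper implicitly assumes when it calls $\iota$ a module map. Note that the paper's route avoids the issue entirely: for $a\in\mathcal F$ it picks $b\in F_N\otimes\mathfrak A$ with $\varphi_N(b)\approx a$, lifts $\iota$ to $\tilde\iota:A\to\prod_n(F_n\otimes\mathfrak A)$, uses that the coordinates of $\tilde\iota(\varphi_N(b))$ eventually agree with $\phi_{Nm}(b)$ up to $\varepsilon$, and applies condition (1) once, for $m$ large, to get $\varphi_m(\phi_{Nm}(b))\approx\varphi_N(b)$; no exact intertwining is ever needed.
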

\begin{proof}
$(i)\Rightarrow(ii)$. This is immediate as for inductive systems all the maps $\phi_{nm}$ are assumed to be $*$-homomorphisms. 

$(ii)\Rightarrow(iii)$. Let $A:=\lim \limits_{\to}(F_n\otimes \mathfrak A)$ be a regular $\mathfrak A$-NF algebra with connecting $*$-homomorphism $\varphi_n: A_n\to A$. Given a finite subset $\mathcal F\ssubset A$ and $\varepsilon > 0$, there is $N\geq 1$, and for each $a\in\mathcal F$ there exists $b:=b(a)\in F_N\otimes\mathfrak A$ with $\|a -\varphi_N(b)\|<4\varepsilon.$ Let $\mathcal G:=\{b(a): a\in \mathcal F\}\subseteq F_N\otimes \mathfrak A$. By Proposition \ref{ce}, we may lift the module map embedding $$\iota: A\hookrightarrow \prod_{n}(F_n\otimes \mathfrak A)/\bigoplus_n(F_n\otimes \mathfrak A)$$ to an admissible c.c.p. map $\tilde\iota: A\hookrightarrow \prod_{n}(F_n\otimes \mathfrak A)$. Let us consider the projection $\pi_m: \prod_{n}(F_n\otimes \mathfrak A)\to F_m\otimes \mathfrak A$ onto the $m^{\rm th}$-component. Then $\tilde\iota\circ\varphi_N$ lifts $\varphi_N$, thus we may find some $M_0\geq 1$ such that for all
$m\geq M_0$ and $b\in\mathcal G$, $$\|\phi_{nm}(b) - \pi_m(\varphi_N (b))\|< 4\varepsilon.$$ By condition $(1)$ in Definition \ref{ind}, there is $M_1\geq 1$  such that for all $M_1\leq m < k$, 
$$\|\phi_{nk}(b) - \phi_{mk}\big(\phi_{nm}(b)\big)\|< 4\varepsilon,$$ 
for $b\in\mathcal G$, and in particular,
$$\|\varphi_N(b) - \varphi_{m}\big(\phi_{Nm}(b)\big)\|< 4\varepsilon,$$ 
for $m\geq M_1$ and $b\in\mathcal G$. Put $M = \max \{M_0,M_1\}$, and observe that 
\begin{align*}
\|\varphi_M\circ\pi_M(a)-a\|&\leq \|\varphi_M\circ\pi_M (\varphi_N(b(a)) - \varphi_N(b(a))\|+2 \|a-\varphi_N(b(a))\|\\ &\leq \|\varphi_M\circ\pi_M (\varphi_N(b(a)) - \varphi_M(\phi_{NM}(b(a)))\|\\&+\|\varphi_M(\phi_{NM}(b(a)))-\varphi_N(b(a))\|+2 \|a-\varphi_N(b(a))\|\\&<\frac{\varepsilon}{4}+\frac{\varepsilon}{4}+2(\frac{\varepsilon}{4})=\varepsilon.
\end{align*}
This means that ${\rm id}_A$ admits an approximate admissible c.p. factoring via $F_n\otimes\mathfrak A$ with downward map $\pi_n$ which is admissible c.c.p., and upward map $\varphi_n$, which is admissible c.p. and order 0 (indeed a $*$-homomorphism). This means that $\mathfrak A$-module nuclear dimension of $A$ is zero, as required.  
\end{proof}

Since conditions $(i)$ and $(ii)$ are not equivalent, we could not expect the equivalence of $(i)$ and $(iii)$ as in the classical case. In Section \ref{5} we provide classes of examples of $\mathfrak A$ for which the last two conditions are equivalent (see, Example \ref{equi}).  

Next we show that $\mathfrak A$-C*-algebras of finite module decomposition rank are $\mathfrak A$-quasidiagonal. Let us first recall the definition of $\mathfrak A$-quasidiagonality from \cite{a3}.

\begin{definition} \label{qd}
	A $C^*$-module $A$ is called $\mathfrak A$-{\it quasidiagonal} (briefly, $\mathfrak A$-QD) if there exists a net of admissible c.c.p. maps $\phi_n: A \to \mathbb M_{k(n)}(\mathfrak A)$ which are approximately multiplicative and approximately isometric, i.e.,
	$\|\phi_n(ab)-\phi_n(a)\phi_n(b)\|\to 0$ and $\|\phi_n(a)\|\to \|a\|,$ as $n\to\infty$,
	for all $a, b\in A$; or equivalently, if for each finite set $\mathfrak F\subseteq A$ and $\varepsilon>0$, there is a positive integer $k$ and a c.c.p. admissible map $\phi: A \to \mathbb M_{k}(\mathfrak A)$ satisfying $\|\phi(ab)-\phi(a)\phi(b)\|<\varepsilon$ and $\|\phi(a)\|> \|a\|-\varepsilon,$
	for all $a, b\in \mathfrak F$.
\end{definition}

In the next two lemmas and the theorem after those, we further assume that the action of $\mathfrak A$ on $A$ satisfies 
$$(a\cdot \alpha)^*=a^*\cdot \alpha^*,\ \ (a\in A, \alpha\in\mathfrak A).$$

The next lemma extends part of \cite[Proposition 3.2]{w}.

\begin{lemma} \label{homo}
Let $\mathfrak A$ be unital and $A$ be separable. Let $\varphi: \mathbb M_n(\mathfrak A)\to A$ be a c.c.p. order 0 module map, then there is a unique  $*$-homomorphism and module map $\pi_\varphi: C_0(0,1]\otimes \mathbb M_n(\mathfrak A)\to C^*({\rm Im}(\varphi))\subseteq A$ satisfying $\pi_\varphi({\rm id}\otimes x)=\varphi(x)$, for $x\in\mathbb M_n(\mathfrak A)$, where ${\rm id}: (0,1]\to (0, 1]$ is the identity function. 
\end{lemma}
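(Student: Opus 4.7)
The plan is to invoke the classical Winter--Zacharias structure theorem for c.c.p.\ order-zero maps to produce $\pi_\varphi$ as a $*$-homomorphism, and then verify separately that it is automatically a right $\mathfrak A$-module map, exploiting the fact that the elementary tensors $\mathrm{id} \otimes x$ generate the domain and that $\varphi$ is itself a module map.

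First I invoke the classical structure theorem (\cite[Proposition 3.2]{w}) applied to the c.c.p.\ order-zero map $\varphi: \mathbb M_n(\mathfrak A) \to A$. This yields a unique $*$-homomorphism $\pi_\varphi: C_0(0,1] \otimes \mathbb M_n(\mathfrak A) \to A$ with $\pi_\varphi(\mathrm{id} \otimes x) = \varphi(x)$; its image lies in $C^*(\mathrm{Im}(\varphi))$ because that subalgebra is norm-closed and contains every generator $\varphi(x)$. Uniqueness as a $*$-homomorphism is immediate from the fact that $\mathrm{id}$ generates $C_0(0,1]$ as a C*-algebra, so $\{\mathrm{id} \otimes x : x \in \mathbb M_n(\mathfrak A)\}$ generates the whole domain, and $*$-homomorphisms are determined by their values on a generating set.

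Next I show $\pi_\varphi$ is a right $\mathfrak A$-module map. Fix $\alpha \in \mathfrak A$ and consider the norm-continuous linear map
\[ \Delta_\alpha(y) := \pi_\varphi(y \cdot \alpha) - \pi_\varphi(y) \cdot \alpha, \quad y \in C_0(0,1] \otimes \mathbb M_n(\mathfrak A); \]
continuity is clear because $\pi_\varphi$ is contractive and the right actions on the domain and codomain have norm at most $\|\alpha\|$. Since the module structure on $C_0(0,1] \otimes \mathbb M_n(\mathfrak A)$ is given by $(f \otimes x) \cdot \alpha = f \otimes (x \cdot \alpha)$, the module property of $\varphi$ yields, on a single generator,
\[ \pi_\varphi((\mathrm{id} \otimes x) \cdot \alpha) = \varphi(x \cdot \alpha) = \varphi(x) \cdot \alpha = \pi_\varphi(\mathrm{id} \otimes x) \cdot \alpha. \]
For a product $z = (\mathrm{id} \otimes x_1) \cdots (\mathrm{id} \otimes x_k)$, the compatibility $(ab) \cdot \alpha = a(b \cdot \alpha)$ (valid in both the domain and $A$) gives $z \cdot \alpha = (\mathrm{id} \otimes x_1) \cdots ((\mathrm{id} \otimes x_k) \cdot \alpha)$, and applying $\pi_\varphi$ as a $*$-homomorphism together with the single-generator case produces $\pi_\varphi(z \cdot \alpha) = \pi_\varphi(z) \cdot \alpha$. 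By linearity, $\Delta_\alpha$ vanishes on the $*$-algebra generated by the $\mathrm{id} \otimes x$'s, which is norm-dense in $C_0(0,1] \otimes \mathbb M_n(\mathfrak A)$, and continuity then forces $\Delta_\alpha \equiv 0$.

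I expect the only obstacle to be managerial rather than conceptual: one must carefully track the interplay of the right $\mathfrak A$-action with the product structure on the tensor product. The classical construction of $\pi_\varphi$ passes through the bidual of $A$ and a supporting $*$-homomorphism of $\mathbb M_n(\mathfrak A)$, but the density-and-continuity argument above bypasses that construction entirely, so no additional information about the module structure needs to be extracted from the proof of the classical theorem.
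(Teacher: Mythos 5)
Your proof is correct, and it takes a genuinely different route from the paper's. The paper does not invoke the classical structure theorem as a black box: it re-runs the Winter--Zacharias construction in the Hilbert-module setting, using separability of $A$ to represent it faithfully in $\mathbb B(H\otimes\mathfrak A)$, passing to the bidual, taking the support projection of $h=\varphi(1_n\otimes 1_{\mathfrak A})$, building the supporting map $\sigma$ as a strong limit of the compressions $(h+1/k)^{-1/2}\varphi(\cdot)(h+1/k)^{-1/2}$, and checking by hand -- using the order-zero property, the module property of $\varphi$, and the extra standing hypothesis $(a\cdot\alpha)^*=a^*\cdot\alpha^*$ imposed just before the lemma -- that $h$ commutes with ${\rm Im}(\varphi)$ and that $\sigma$ is a $*$-homomorphism and module map, before setting $\pi_\varphi=\pi\otimes\sigma$. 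You instead take existence and uniqueness of the supporting $*$-homomorphism for granted and establish the module property only on the generators ${\rm id}\otimes x$ and their products, closing up by linearity, density and continuity; the product step needs nothing beyond the standing compatibility $(ab)\cdot\alpha=a(b\cdot\alpha)$ in the domain and in $A$, so your argument is shorter, avoids the representation/bidual machinery, does not use separability of $A$, and in fact does not use the additional $*$-compatibility hypothesis at all. Two caveats. First, since $\mathbb M_n(\mathfrak A)$ is infinite dimensional whenever $\mathfrak A$ is, you must quote the structure theorem in its general form for arbitrary C*-algebra domains -- the cone correspondence in \cite{wz2} -- rather than a statement tailored to finite-dimensional domains; this is a citation adjustment, not a mathematical gap. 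Second, the paper's longer construction also delivers the intermediate data (the supporting homomorphism $\sigma$ and the commuting positive element $h$) explicitly as module maps, which is what gets exploited later, e.g.\ in Lemma \ref{key}; with your approach these objects have to be recovered a posteriori from $\pi_\varphi$ (for instance $h=\pi_\varphi({\rm id}\otimes 1_n\otimes 1_{\mathfrak A})$), which is possible but should be said if the later arguments are to rest on your version of the lemma.
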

\begin{proof}
	Since $A$ is separable, there is a faithful non-degenerate representation $A\subseteq \mathbb B(H\otimes\mathfrak A)$, for some Hilbert space $H$ \cite[Lemma 6.4]{lan}. We may assume that $C^*({\rm Im}(\varphi))=A$. Let $h:=\varphi(1_n\otimes 1_\mathfrak A)$, where $1_n$ is the identity of $\mathbb M_n(\mathbb C)$. Let $p$ be the support projection of $h$ in the von Neumann algebra $\mathbb B(H\otimes \mathfrak A)^{**}$. Then, given a self-adjoint element $x\in \mathbb M_n(\mathfrak A)$ with $\|x\|\leq 1$, $\varphi(x)\leq \varphi(1_n\otimes 1_\mathfrak A) \leq p$, i.e., $p\varphi(x)=\varphi(x)p=\varphi(x)$, inside $\mathbb B(H\otimes \mathfrak A)^{**}$. Thus $pa=ap=a$, for each $a\in A$. This along with non-degeneracy of inclusion $A\subseteq \mathbb B(H\otimes \mathfrak A)$, says that $p= \mathbbm 1$, the identity of $\mathbb B(H\otimes \mathfrak A)$, which is also the identity of $\mathbb B(H\otimes \mathfrak A)^{**}$. Consider the sequence of c.c.p. maps,
	$$\sigma_k: \mathbb M_n(\mathfrak A)\to  \mathbb B(H\otimes\mathfrak A); \ x\mapsto (h+1/k\mathbbm 1)^{\frac{-1}{2}}\varphi(x) (h+1/k\mathbbm 1)^{\frac{-1}{2}}.$$
	It is well known that the sequence $\{(h+1/k\mathbbm 1)^{\frac{-1}{2}}\}$ SOT-converges to its support projection $p=\mathbbm 1$ inside the von Neumann algebra $\mathbb B(H\otimes \mathfrak A)^{**}$ (c.f., \cite[5.4.7]{s}). Now the same holds for the norm-bounded sequence $\{(h+1/k\mathbbm 1)^{\frac{-1}{2}}h(h+1/k\mathbbm 1)^{\frac{-1}{2}}\}$. For $0\leq x\leq 1_n\otimes 1_\mathfrak A$ in $\mathbb M_n(\mathfrak A)$, we have $0\leq \varphi(x)\leq h$, and so the norm-bounded increasing sequence  $\{(h+1/k\mathbbm 1)^{\frac{-1}{2}}\varphi(x)(h+1/k\mathbbm 1)^{\frac{-1}{2}}\}$ SOT-converges to an element $\sigma(x)\in\mathbb B(H\otimes \mathfrak A)^{**}$. Extending linearly, we get a map $\sigma: \mathbb M_n(\mathfrak A)\to  \mathbb B(H\otimes\mathfrak A)^{**}$, satisfying,
	$$h^{\frac{1}{2}}\sigma(x)h^{\frac{1}{2}}=p\varphi(x)p=\varphi(x)\ \ (x\in \mathbb M_n(\mathfrak{A})).$$ We claim that $\sigma$ is a $*$-homomorphism and a module map. To prove the claim, let us first observe that,
	$$h\in {\rm Im}(\sigma)^{'}\subseteq \mathbb B(H\otimes \mathfrak A)^{**}.$$ 
	Given a self-adjoint element  $t\in\mathbb M_n(\mathbb{C})$, $\alpha\in\mathfrak{A}$, and $x=t\otimes\alpha$, we may write $x=\sum_{j=1}^{n} \lambda_je_{jj}\otimes \alpha$, for the matrix unit $\{e_{ij}\}\subseteq \mathbb M_n(\mathbb C)$. Since $\varphi$ is a module map, 
	\begin{align*}
		h\varphi(x)&=\varphi(1_n\otimes 1_\mathfrak{A})\varphi(x)\\&=
		\sum_{i,j} \lambda_j\varphi(e_{ii}\otimes 1_\mathfrak A)\varphi(e_{jj}\otimes 1_\mathfrak A)\cdot \alpha\\&=\sum_{j} \lambda_j\varphi(e_{jj}\otimes 1_\mathfrak A)^2\cdot \alpha,
	\end{align*}
where the last equality follows from the fact that $\varphi$ is order 0. Now since $(a\cdot\alpha)^*=a^*\cdot\alpha^*,$ we have,
$$(a\cdot\alpha)b=(b^*(a\cdot\alpha)^*)^*=(b^*(a^*\cdot\alpha^*))^*=((b^*a^*)\cdot \alpha^*)^*=(ab)\cdot\alpha,$$
for $a,b\in A$ and $\alpha\in \mathfrak{A}.$ Therefore,
 \begin{align*}
 	\varphi(x)h&=\varphi(x)\varphi(1_n\otimes 1_\mathfrak{A})\\&=
 	\sum_{i,j} \lambda_j(\varphi(e_{jj}\otimes 1_\mathfrak A)\cdot \alpha)\varphi(e_{ii}\otimes 1_\mathfrak A)\\&=	\sum_{i,j} \lambda_j\varphi(e_{jj}\otimes 1_\mathfrak A)\varphi(e_{ii}\otimes 1_\mathfrak A)\cdot \alpha\\&=\sum_{j} \lambda_j\varphi(e_{jj}\otimes 1_\mathfrak A)^2\cdot \alpha,
 \end{align*}  
hence, $	\varphi(x)h=h\varphi(x)$, and so $h\sigma(x)=\sigma(x)h$, for elements $x$ of the above form, and so for each $x\in\mathbb M_n(\mathfrak A)$, proving the claim. Next, let us observe that $\sigma$ is a module map and a $*$-homomorphism. For $x\in \mathbb M_n(\mathfrak{A})$ and $\alpha\in\mathfrak{A}$, since $h$ commutes with range of $\varphi$,
\begin{align*}
h^{\frac{1}{2}}(\sigma(x\cdot\alpha)-\sigma(x)\cdot\alpha)h^{\frac{1}{2}}&= h^{\frac{1}{2}}\sigma(x\cdot\alpha)h^{\frac{1}{2}}-h^{\frac{1}{2}}(\sigma(x)\cdot\alpha)h^{\frac{1}{2}}\\&=\varphi(x\cdot\alpha)-h^{\frac{1}{2}}(\sigma(x)\cdot\alpha)h^{\frac{1}{2}} \\&=\varphi(x\cdot\alpha)-\lim_kh^{\frac{1}{2}}((h+1/k\mathbbm 1)^{-1}\varphi(x)\cdot\alpha)h^{\frac{1}{2}} \\&=\varphi(x\cdot\alpha)-\lim_kh^{\frac{1}{2}}(h+1/k\mathbbm 1)^{-1}\varphi(x\cdot\alpha)h^{\frac{1}{2}}	\\&=\varphi(x\cdot\alpha)-\lim_kh(h+1/k\mathbbm 1)^{-1}\varphi(x\cdot\alpha)\\&=\varphi(x\cdot\alpha)-\mathbbm 1\varphi(x\cdot\alpha)=0.			
\end{align*}
Since the support projection of $h$ is $\mathbbm 1$, we have $hyh=0$ iff $y=0$, for $y\in\mathbb B(H\otimes \mathfrak{A})^{**}.$ In particular, multiplying both sides of above equalities with $h^{\frac{1}{2}}$, we get $\sigma(x\cdot\alpha)=\sigma(x)\cdot\alpha$, as required. Next, take a projection $e_1\in\mathbb M_n(\mathfrak A)$ and put $e:=e_1\otimes 1_\mathfrak{A}$ and $1-e=(1-e_1)\otimes 1_\mathfrak{A}$.  In particular,
$$h\sigma(e)\sigma(1-e)h=h^{\frac{1}{2}}\sigma(e)h^{\frac{1}{2}}h^{\frac{1}{2}}\sigma(1-e)h^{\frac{1}{2}}=\varphi(a)\varphi(1-e)=0,$$
implies $\sigma(e)\sigma(1-e)=0$, therefore,
$$\|\sigma(ex)-\sigma(e)\sigma(x)\|\leq \|\sigma(ee^*)-\sigma(e)\sigma(e)^*\|\|x\|=0,$$
 since $\sigma$ is c.c.p., that is, $\sigma(ex)=\sigma(e)\sigma(x)$, and similarly,  $\sigma(xe)=\sigma(e)\sigma(x)$, for each $x\in\mathbb M_n(\mathfrak{A})$. Now let $x_1:=e_1\otimes \alpha=e\cdot\alpha$, for some $\alpha\in\mathfrak{A}$, and observe that for each $x$,
 \begin{align*}
 \sigma(xx_1)&=\sigma(x(e\cdot\alpha))=\sigma(xe\cdot\alpha)=\sigma(xe)\cdot\alpha\\&=
 \sigma(x)\sigma(e)\cdot\alpha= \sigma(x)\sigma(e\cdot\alpha)= \sigma(x)\sigma(x_1),			
 \end{align*}
 since $\sigma$ is a module map. Since linear combinations of elements of the form $x_1$ above generates $\mathbb M_n(\mathfrak{A})$, it follows that $\sigma$ is a homomorphism. Finally, $\sigma$ preserves adjoints as it is c.c.p.  
 
 Since the support projection of $h$ is $\mathbbm 1$, $h$ is invertible in $\mathbb B(H\otimes \mathfrak{A})^{**}$, and so $0\notin {\rm sp}_A(h)$, as the spectrum does not change going to a C*-subspace. In particular, $h$ is invertible in the minimal unitization of $A$. This in turn implies that ${\rm Im}(\sigma)=C^*({\rm Im}(\varphi))= A$.  
 
 Next, $C^*(h)\subseteq A$ is $*$-isomorphic to $C_0({\rm sp}(h))$, which in turn is a quotient of $C_0(0, 1]$, as $\|h\|\leq 1.$ Let $\pi:C_0(0,1]\twoheadrightarrow
 C^*(h)\subseteq A$ be the corresponding surjection. Since the range of $\pi$ commutes with the range of $\sigma: \mathbb M_n(\mathfrak{A})\to A$, we get a $*$-homomorphism $\pi_\varphi:=\pi\otimes\sigma: C_0(0,1]\otimes\mathbb M_n(\mathfrak{A})\to A$, satisfying $\pi\otimes\sigma(f\otimes x)=\pi(f)\otimes \sigma(x),$ for $x\in\mathbb M_n(\mathfrak{A})$. In particular, identifying $A=C^{*}({\rm Im}(\varphi))$ with $C^*(h)\otimes {\rm Im}(\sigma)$, for $f={\rm id}$, we get $\pi_\varphi({\rm id}\otimes x)=h\otimes\sigma(x)=\varphi(x),$ as required.      
\end{proof}
\begin{lemma}\label{key}
	Let $A$ be separable and $\mathfrak A$ be unital and simple. Then the following are equivalent:
	
	$(i)$ ${\rm dr}_\mathfrak A(A)\leq n<\infty,$
	
	$(ii)$ for any finite subset $\mathcal F\ssubset A$ and $\varepsilon > 0$, there is a finite dimensional C*-algebra $F = F^{(0)}\oplus \cdots\oplus F^{(n)}$ and admissible c.c.p. map $\psi:  A\to F\otimes\mathfrak A$ and c.c.p. module map $\varphi: F\otimes\mathfrak A\to A$ satisfying,
	
	{\rm (1)} $\|\varphi\circ\psi(a)-a\|<\varepsilon$,
	
	{\rm (2)} $\|\psi(ab)-\psi(a)\psi(b)\|<\varepsilon$,
	
	{\rm (3)} the restriction $\varphi^{(i)}$ of $\varphi$ to $F^{(i)}\otimes\mathfrak A$ is order 0,
	
	\noindent for $a,b\in \mathcal F$ and $0\leq i\leq n$. 
\end{lemma}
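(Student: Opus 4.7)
The direction (ii)$\Rightarrow$(i) is immediate: a c.p.c.\ module map $\varphi$ whose restrictions to each $F^{(i)}\otimes\mathfrak{A}$ are honestly order $0$ is automatically $(\varepsilon_i,K_i)$-order $0$ for every $\varepsilon_i>0$ and compact $K_i$, so the singleton family $\mathcal{G}=\{\varphi\}$ together with the $\psi$ supplied by (ii) witnesses ${\rm dr}_\mathfrak{A}(A)\le n$, and condition (1) yields the required approximate factorization.

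For the nontrivial direction (i)$\Rightarrow$(ii), my plan is as follows. First, I would establish a decomposition-rank analog of Corollary~\ref{rel} by adapting that proof to the c.p.c.\ module setting and invoking Lemma~\ref{seq} to upgrade approximately order-$0$ maps into exactly order-$0$ ones upon passage to $A_\infty$. This yields a net of pairs $(\psi_\lambda,\varphi_\lambda)$ with $\psi_\lambda\colon A\to F_\lambda\otimes\mathfrak{A}$ admissible c.c.p., $\varphi_\lambda\colon F_\lambda\otimes\mathfrak{A}\to A_\infty$ a c.p.c.\ module map whose restrictions $\varphi_\lambda^{(i)}$ to the blocks $F_\lambda^{(i)}\otimes\mathfrak{A}$ are honestly c.p.c.\ order $0$ module maps, and $\varphi_\lambda\circ\psi_\lambda\to\iota$ in point-norm.

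The heart of the argument, adapting Kirchberg--Winter's trick to the present setting, is to use Lemma~\ref{homo} to force approximate multiplicativity of $\psi_\lambda$. Applied componentwise on the matrix summands of $F_\lambda^{(i)}$, Lemma~\ref{homo} provides a supporting $*$-homomorphism and module map $\pi_\lambda^{(i)}\colon C_0(0,1]\otimes F_\lambda^{(i)}\otimes\mathfrak{A}\to A_\infty$ together with a positive contraction $h_\lambda^{(i)}:=\varphi_\lambda^{(i)}(1)$ such that $\varphi_\lambda^{(i)}(x)=h_\lambda^{(i)}\sigma_\lambda^{(i)}(x)$ for a $*$-homomorphism and module map $\sigma_\lambda^{(i)}$ commuting with $h_\lambda^{(i)}$. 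For $a,b$ in a prescribed finite set $\mathcal{F}\ssubset A$, I would expand
\[
\varphi_\lambda(\psi_\lambda(a))\,\varphi_\lambda(\psi_\lambda(b))=\sum_{i,j}\varphi_\lambda^{(i)}(\psi_\lambda(a)_i)\,\varphi_\lambda^{(j)}(\psi_\lambda(b)_j)
\]
along the block decomposition, and use the identity $\sigma_\lambda^{(i)}(xy)=\sigma_\lambda^{(i)}(x)\sigma_\lambda^{(i)}(y)$ together with the asymptotic multiplicativity of $\iota=\lim\varphi_\lambda\circ\psi_\lambda$ to conclude that the off-diagonal ($i\neq j$) cross-terms vanish asymptotically and that the diagonal terms force $\psi_\lambda(ab)-\psi_\lambda(a)\psi_\lambda(b)\to 0$ in $A_\infty$ on $\mathcal{F}$. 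Unitality of $\mathfrak{A}$ is used to define $h_\lambda^{(i)}$, and simplicity enters exactly as in the proof of Lemma~\ref{homo} to guarantee that the support projection of $h_\lambda^{(i)}$ is the ambient unit.

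Finally, by Proposition~\ref{ce} and Remark~\ref{mlift}, the maps into $A_\infty$ just constructed are realized by sequences of, respectively, c.c.p.\ admissible and c.c.p.\ module maps into $A$; choosing an index $N$ sufficiently large produces a single pair $(\psi,\varphi)$ satisfying (1)--(3) on $\mathcal{F}$ within $\varepsilon$. The principal obstacle is the multiplicativity step: although the algebraic manipulation is clean once Lemma~\ref{homo} is in hand, one must carefully track admissibility through the block expansion and derive quantitative control on the cross-terms from the order-$0$ factorization, which is delicate in the module context because the elements $h_\lambda^{(i)}$ a priori live in an enveloping von Neumann algebra rather than in $A$ itself.
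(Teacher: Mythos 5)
Your (ii)$\Rightarrow$(i) direction is fine and matches the paper, but the core of your (i)$\Rightarrow$(ii) argument has a genuine gap. From the fact that $\varphi_\lambda\circ\psi_\lambda$ converges to the $*$-homomorphism $\iota$ you cannot conclude that $\psi_\lambda$ itself becomes approximately multiplicative: $\varphi_\lambda$ may (nearly) annihilate exactly those summands on which $\psi_\lambda$ fails to be multiplicative. Concretely, take $\varphi^{(1)}=0$ (vacuously order $0$) and let the $F^{(1)}$-component of $\psi$ be an arbitrary c.c.p.\ map; then $\varphi\circ\psi$ can be as close to the identity as you like while $\|\psi(a^2)-\psi(a)^2\|$ stays large. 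Likewise the off-diagonal cross-terms $\varphi^{(i)}(\cdot)\varphi^{(j)}(\cdot)$, $i\neq j$, have no reason to vanish: the images of distinct summands of $F$ under an $(n+1)$-decomposable map need not be even approximately orthogonal. So the step where you assert that ``the diagonal terms force $\psi_\lambda(ab)-\psi_\lambda(a)\psi_\lambda(b)\to 0$'' is exactly the point that fails, and no amount of bookkeeping of admissibility repairs it.

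What the paper does instead (adapting \cite[17.4.3]{s}) is the missing repair: starting from a $\delta$-approximation $(\psi',\varphi')$ with $\delta<\varepsilon^3/(\varepsilon^2+40(n+1))$, it invokes Lemma \ref{homo} to get supporting $*$-homomorphisms $\pi_{\varphi'_i}$, and uses simplicity of $\mathfrak A$ to make these injective, hence isometric, yielding $\|\varphi'_i(x)\|=\|x\|\,\|h_i\|$ with $h_i=\varphi'_i(1_{r_i}\otimes 1_{\mathfrak A})$. This converts a large multiplicativity defect of $\psi'$ on the $i$-th block into the bound $\varepsilon^2\|h_i\|\le 5\delta$, i.e.\ the ``bad'' blocks are exactly those where $h_i$ is small; one then \emph{discards} those blocks, replacing $F'$ by $F=\bigoplus_{i\in I(\varepsilon)}\mathbb M_{r_i}$ and compressing $\psi'$ by $1_F\otimes 1_{\mathfrak A}$, and checks quantitatively that both (1) and (2) survive because the discarded blocks contribute only on the order of $(n+1)\delta/\varepsilon^2$. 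This block-discarding/compression step, with its quantitative control, is absent from your proposal. Two further misalignments: simplicity of $\mathfrak A$ is used for injectivity (isometry) of the supporting homomorphisms, not to identify the support projection of $h^{(i)}$ with the unit, which is where you place it; and your detour through $A_\infty$ is both unnecessary and problematic -- Lemma \ref{homo} is proved for separable codomains, the lifting results (Proposition \ref{ce}, Remark \ref{mlift}) need separability/$\mathfrak A$-nuclearity hypotheses not assumed in Lemma \ref{key}, and, as Remark \ref{es}(iii) points out, exact order-zero-ness cannot be pulled back from $A_\infty$ to finite stages without module cone projectivity, so your final ``choose $N$ large'' step would only deliver approximately order-$0$ maps, not the exactly order-$0$ restrictions required in (3). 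The paper avoids all of this by working directly with finite-stage maps into $A$.
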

\begin{proof}
	We only need to prove $(i)\Rightarrow(ii)$, as the other direction is trivial. We adapt the proof of \cite[17.4.3]{s}. Since $\psi$ is c.c.p. it satisfies (cf., \cite[7.3.5]{s}) the inequality
	$$\|\psi(ab)-\psi(a)\psi(b)\|<\|\psi(aa^*)-\psi(a)\psi(a^*)\|^{\frac{1}{2}}\|b\|,$$
	in particular, as we may always assume that $\mathcal F$ consists of positive elements of norm at most 1, (2) is equivalent to 
	
	(2)$^{'}$ $\|\psi(a^2)-\psi(a)^2\|<\varepsilon\  \ (a\in\mathcal F).$
	
\noindent	Let $\delta<\varepsilon^3/(\varepsilon^2+40(n+1))$. Choose finite dimensional C*-algebra $F^{'} = F^{'(0)}\oplus \cdots\oplus F^{'(n)}$, an admissible c.c.p. map $\psi^{'}: A\to F^{'}\otimes\mathfrak A$, and a c.c.p. module map $\varphi^{'}: F^{'}\otimes\mathfrak A\to A$ satisfying, 
$$\|\psi^{'}\circ\varphi^{'}(a)-a\|<\delta\ \  (a\in\mathcal F\cup\mathcal F^2),$$
\noindent with $\varphi^{'}_i:=\varphi^{'}|_{F^{'(i)}\otimes\mathfrak A}$ order 0, for $0\leq i\leq n$. We may further assume that $F^{'(i)}=\mathbb M_{r_i}(\mathbb C)$. Let $\pi_{\varphi^{'}_i}: \mathbb M_{r_i}(\mathfrak A)\to A$ be the corresponding $*$-homomorphism and module map as in Lemma \ref{homo}. Then $\pi_{\varphi^{'}_i}$ is injective (and so an isometry) as $\mathfrak A$ is simple. In particular, 
$$\|\varphi^{'}_i(x)\|=\|\pi_{\varphi^{'}_i}({\rm id}\otimes x)\|=\|x\|\|h_i\|,$$ for $x\in  \mathbb M_{r_i}(\mathfrak A)$ and $1\leq i\leq n$, where $h_i:=\varphi^{'}_i(1_{r_i}\otimes 1_\mathfrak A)$. Let $\psi^{'}_i$ be the restriction of $\psi^{'}$ to the inverse image of $F^{'(i)}\otimes \mathfrak A$ under $\psi^{'}$. Given $\varepsilon>0$, let $I:=\{1,\cdots, n\}$ and $I(\varepsilon)$ consist of those $i\in I$ satisfying $\|\psi^{'}_i(a)^2-\psi^{'}_i(a^2)\|<\varepsilon/(n+1),$ for each  $a\in \mathcal F$. For  $i\in I\backslash I(\varepsilon)$, there is $a_i\in\mathcal F$ such that,
\begin{align*}
	\varepsilon^2\|h_i\|&\leq \|\varphi^{'}_i\big(\psi^{'}_i(a_i)^2-\psi^{'}_i(a_i^2)\big)\|\leq \|\varphi^{'}\big(\psi^{'}(a_i)^2-\psi^{'}(a_i^2)\big)\|\\&\leq \|\varphi^{'}\circ\psi^{'}(a_i)^2-a_i^2\|+\|a_i^2-\varphi^{'}\circ\psi^{'}(a_i^2)\|\\&\leq \|\big(\varphi^{'}\circ\psi^{'}(a_i)-a_i\big)\big(\varphi^{'}\circ\psi^{'}(a_i)+a_i\big)\|+\|\varphi^{'}\circ\psi^{'}(a_i)a_i-a_i^2\|\\&+\|a_i^2-a_i\varphi^{'}\circ\psi^{'}(a_i)\|+\|a_i^2-\varphi^{'}\circ\psi^{'}(a_i^2)\|\\&\leq 4\|\varphi^{'}\circ\psi^{'}(a_i)-a_i\|+\|a_i^2-\varphi^{'}\circ\psi^{'}(a_i^2)\|\leq 5\delta,
\end{align*}
as $\|\varphi^{'}\circ\psi^{'}(a_i)\|\leq \|a_i\|\leq 1;$ i.e., $\|h_i\|\leq 5\delta/\varepsilon^2,$ for $i\in I\backslash I(\varepsilon)$. Put $F:=\bigoplus_{i\in I(\varepsilon)} \mathbb M_{r_i}(\mathbb C)$, and define admissible c.c.p. maps $\varphi:= \varphi^{'}|_{F\otimes \mathfrak A}$ and 
$$\psi: A\to F\otimes \mathfrak A;\ a\mapsto (1_F\otimes 1_\mathfrak A)\psi^{'}(a)(1_F\otimes 1_\mathfrak A),$$
then for $p:=1_F\otimes 1_\mathfrak A$, $1-p:=(1_{F^{'}}-1_F)\otimes 1_\mathfrak A$, and $q:=\varphi^{'}(p)$, we have 
$$\|q\|\leq\sum_{i\in I(\varepsilon)}\|\varphi^{'}(1_{r_i}\otimes 1_\mathfrak A)\|=\sum_{i\in I(\varepsilon)}\|h_i\|\leq 5(n+1)\delta/\varepsilon^2,$$ and 
\begin{align*}
	\|\psi(a)^2-\psi(a^2)\|&=\|p\psi^{'}(a)p\psi^{'}(a)p-p\psi^{'}(a^2)p\|\\&\leq \|(p\psi^{'}(a))^2-p\psi^{'}(a^2)\|\\&
	\leq \sum_{i\in I(\varepsilon)}\|\psi^{'}_i(a)^2-\psi^{'}_i(a^2)\|<\varepsilon,
\end{align*} 
for each $a\in\mathcal F$. Also, for $b:=(1-p)\psi^{'}(a)$ and $c:=\psi^{'}(a)(1-p)$, we have $\|b\|\leq \|\varphi^{'}(a)\|\leq\|a\|\leq 1$, and the same for $c$, therefore, 
\begin{align*}
	\|\varphi\circ\psi(a)-a\|&=\|\varphi\big(p\psi^{'}(a)p\big)-a\|\\&\leq\|\varphi^{'}\big(\psi^{'}(a)\big)-a\|+\|\varphi^{'}\big((1-p)\psi^{'}(a)p\big)\|+\|\varphi^{'}\big(p\psi^{'}(a)(1-p)\big)\|\\&\leq\delta+\|\varphi^{'}(bp)\|+|\varphi^{'}(pc)\|\\&\leq\delta+\|\varphi^{'}(p^2)-\varphi^{'}(p)^2\|(\|b\|+\|c\|) \\&\leq\delta+4\|\varphi^{'}(p)\|(1+\|\varphi^{'}(p)\|)\\&\leq\delta+8\|q\|\leq \delta(1+40(n+1)/\varepsilon^2)<\varepsilon,
\end{align*} 
for each $a\in\mathcal F$, as required. 
\end{proof}

When $A$ is separable, one could always faithfully represent $A$ in $\mathbb B(H\otimes \mathfrak A)$, for some separable Hilbert space $H$ \cite[Lemma 6.4]{lan}. In this case, this is equivalent to the existence of a sequence $(p_n)$ of finite rank projections in $\mathbb K(H\otimes \mathfrak A)$, SOT-increasing to the identity operator in $\mathbb B(H\otimes \mathfrak A)$, and approximately commuting with $A$ (c.f., \cite[Lemma 3.4]{a3}). Here, being finite rank simply means that the image of $p_n$ is of the form $H_n\otimes\mathfrak A$, for some finite dimensional (closed) subspace $H_n\leq H$.  

In the next result, we assume that the action satisfies the condition stated before Lemma \ref{homo}.   

\begin{theorem}
	Let $\mathfrak A$ be unital and simple. Then every  separable $\mathfrak A$-C*-algebra $A$ with finite $\mathfrak A$-module decomposition rank is $\mathfrak A$-quasidiagonal.
\end{theorem}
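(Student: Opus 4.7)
The plan is to verify the defining property of $\mathfrak A$-quasidiagonality (Definition \ref{qd}) directly, using Lemma \ref{key} to produce the required map and then routing it into a single matrix algebra $\mathbb M_k(\mathfrak A)$ via a block-diagonal embedding. Fix a finite subset $\mathcal F \ssubset A$ and $\varepsilon > 0$; I must produce an integer $k$ and a c.c.p.\ admissible map $\phi : A \to \mathbb M_k(\mathfrak A)$ with $\|\phi(ab) - \phi(a)\phi(b)\| < \varepsilon$ and $\|\phi(a)\| > \|a\| - \varepsilon$ for all $a,b \in \mathcal F$.

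Since ${\rm dr}_\mathfrak A(A) \leq n < \infty$, $\mathfrak A$ is unital and simple, and $A$ is separable, the equivalence in Lemma \ref{key} applies. Applied to $\mathcal F$ and $\varepsilon$, it yields a finite-dimensional C*-algebra $F = F^{(0)} \oplus \cdots \oplus F^{(n)}$ with $F^{(i)} = \mathbb M_{r_i}(\mathbb C)$, together with an admissible c.c.p.\ map $\psi : A \to F \otimes \mathfrak A$ and a c.c.p.\ module map $\varphi : F \otimes \mathfrak A \to A$ satisfying $\|\varphi\circ\psi(a) - a\| < \varepsilon$, $\|\psi(ab) - \psi(a)\psi(b)\| < \varepsilon$ for $a,b \in \mathcal F$, with each $\varphi^{(i)}$ order $0$. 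Approximate multiplicativity of $\psi$ is given directly, and approximate isometry follows cheaply: since $\varphi$ is contractive,
\[
\|\psi(a)\| \;\geq\; \|\varphi(\psi(a))\| \;\geq\; \|a\| - \|\varphi\circ\psi(a) - a\| \;>\; \|a\| - \varepsilon.
\]

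Next, identify $F \otimes \mathfrak A$ with $\bigoplus_{i=0}^n \mathbb M_{r_i}(\mathfrak A)$ and set $k := r_0 + \cdots + r_n$. The block-diagonal embedding
\[
\iota : \bigoplus_{i=0}^n \mathbb M_{r_i}(\mathfrak A) \;\hookrightarrow\; \mathbb M_k(\mathfrak A)
\]
is an isometric $*$-homomorphism that preserves the right $\mathfrak A$-module structure (the $\mathfrak A$-action on either side is entry-wise right multiplication), so $\iota$ is in particular an admissible c.c.p.\ module map. Define $\phi := \iota \circ \psi : A \to \mathbb M_k(\mathfrak A)$. Since $\iota$ is isometric and a $*$-homomorphism, the two estimates above transfer verbatim to $\phi$, giving $\|\phi(ab) - \phi(a)\phi(b)\| < \varepsilon$ and $\|\phi(a)\| > \|a\| - \varepsilon$ on $\mathcal F$. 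Admissibility of $\phi$ follows from Definition \ref{adm}(iv), as it is the composition of admissible maps.

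The main ``obstacle'' has already been overcome in Lemma \ref{key}: without the upgrade from an arbitrary approximate factorization to one whose downward map $\psi$ is itself approximately multiplicative, one would only get $\varphi \circ \psi$ close to the identity, which does not by itself make $\psi$ almost multiplicative. Once that upgrade is in hand, the simplicity hypothesis on $\mathfrak A$ (used through Lemma \ref{homo} inside the proof of Lemma \ref{key}) and the block-diagonal embedding combine to deliver quasidiagonality as a near-immediate corollary, so no additional analytic work is required in this final step.
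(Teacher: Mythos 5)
Your argument is correct and is essentially the paper's own proof: both invoke Lemma \ref{key} to get an approximately multiplicative admissible $\psi$ with contractive module upward map $\varphi$, deduce approximate isometry from $\|\psi(a)\|\geq\|\varphi(\psi(a))\|>\|a\|-\varepsilon$, and then pass into a single $\mathbb M_k(\mathfrak A)$ by embedding the finite-dimensional algebra $F$ into a matrix algebra. Your only addition is spelling out explicitly that the block-diagonal embedding is an isometric $*$-homomorphism, module map, and admissible, which the paper leaves implicit.
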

\begin{proof}
If $A$ is a separable $\mathfrak A$-C*-algebra with	${\rm dr}_\mathfrak A(A)\leq n<\infty,$ then  for any finite subset $\mathcal F\ssubset A$ and $\varepsilon > 0$, there is a finite dimensional C*-algebra $F = F^{(0)}\oplus \cdots\oplus F^{(n)}$ and admissible c.c.p. map $\psi:  A\to F\otimes\mathfrak A$ and c.c.p. module map $\varphi: F\otimes\mathfrak A\to A$ satisfying conditions (1)-(3) in part $(ii)$ of by Lemma \ref{key}. In particular,
$$\|\psi(a)\|\geq \|\varphi(\psi(a))\|>\|a\|-\varepsilon,$$
for each $a\in\mathcal F$. For the indices  $i:=(\varepsilon, \mathcal F)$ with natural partial order, if $\psi_i$ is the above admissible c.c.p. map $\psi$ and $F_i$ is the above finite dimensional C*-algebra, and $k(i)$ is large enough for $\mathbb M_{k(i)}$ to contain $F_i$, then the net $\psi_i: A\to \mathbb M_{k(i)}(\mathfrak A)$ satisfies the conditions in the definition of $\mathfrak A$-quasidiagonality.   
\end{proof}

Following Tikuisis-Winter \cite{tiw}, to further explore the finiteness of module nuclear dimension and decomposition rank, we explore these notions for module maps.   

\begin{definition} \label{nuclear2}
	For $\mathfrak A$-C*-algebras $A$, $B$ a module map $\phi: A\to B$  has module nuclear dimension $n$, writing ${\rm dim}_{\rm nuc}^\mathfrak A(\phi)=n$,  if $n$ is the smallest non-negative integer such that for every finite set $\mathcal F\ssubset A$ and $\varepsilon > 0$ there
	exist a finite dimensional C*-algebra $F$, c.c.p. admissible map $\psi: A \to  F \otimes \mathfrak A$, a nonempty set $\mathcal G$ of c.p. admissible maps $\varphi: F\otimes \mathfrak A\to B$ such that $\|\varphi\circ\psi(a) - \phi(a)\|< \varepsilon$, for $a\in \mathcal F$, $\varphi\in\mathcal G$, and $F$ decomposes into $n+1$ ideals $F = F^{(0)}\oplus \cdots\oplus F^{(n)}$  such that for each $i = 0, \cdots, n$,
	any compact set $K_i\subseteq (F^{(i)}\otimes \mathcal A)_1$, and any $\varepsilon_i > 0$, there is a $\varphi\in\mathcal G$ whose restrictions $\varphi^{(i)}: F^{(i)}\otimes \mathfrak A\to B$ are $(\varepsilon_i, K_i)$-order 0.
	If moreover the family $\mathcal G$ could be arranged to consist of c.p.c module maps $\varphi: F\otimes \mathfrak A\to B$, we say that module decomposition rank of $A$ (over $\mathfrak A$) is $n$, writing ${\rm dr}_\mathfrak A(\phi)=n$.  
\end{definition}

Let $\iota_A: A\hookrightarrow A_\infty$ be the embedding into the cosets of constant sequences. 
The following lemma is a restatement of Corollary \ref{rel}.

\begin{lemma} \label{equal}
${\rm dim}_{\rm nuc}^\mathfrak A(A)={\rm dim}_{\rm nuc}^\mathfrak A(\iota_A)$.
\end{lemma}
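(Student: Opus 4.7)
The inequality $\dim_{\rm nuc}^\mathfrak A(\iota_A)\le \dim_{\rm nuc}^\mathfrak A(A)$ is the easy direction. Given any admissible c.p.\ approximation $\varphi\circ\psi$ of ${\rm id}_A$ through $F\otimes\mathfrak A$ witnessing module nuclear dimension at most $n$, the post-composition $\iota_A\circ\varphi$ is an admissible c.p.\ map into $A_\infty$ whose restrictions to each $F^{(i)}\otimes\mathfrak A$ remain $(\varepsilon_i,K_i)$-order $0$, since $\iota_A$ is an isometric module $*$-homomorphism and composition with such a map preserves the condition of Definition \ref{oz}. So the family $\{\iota_A\circ\varphi\}$ witnesses $\dim_{\rm nuc}^\mathfrak A(\iota_A)\le n$ with the same $\psi$ and $\mathcal F$.

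For the reverse inequality $\dim_{\rm nuc}^\mathfrak A(A)\le \dim_{\rm nuc}^\mathfrak A(\iota_A)$, the plan is to reduce to condition (ii) of Corollary \ref{rel}. Assume $\dim_{\rm nuc}^\mathfrak A(\iota_A)\le n$, and fix a countable exhaustion $(\mathcal F_m,\varepsilon_m)$ with $\varepsilon_m\to 0$. Applying Definition \ref{nuclear2} to $\iota_A$ at each stage $m$ produces a decomposition $F_m=F_m^{(0)}\oplus\cdots\oplus F_m^{(n)}$, an admissible c.c.p.\ map $\psi_m\colon A\to F_m\otimes\mathfrak A$, and a family $\mathcal G_m$ of admissible c.p.\ maps into $A_\infty$ all $\varepsilon_m$-approximating $\iota_A$ on $\mathcal F_m$ after $\psi_m$. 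For each $i$ I fix an exhausting sequence of compact sets $K_m^{(i)}\subseteq (F_m^{(i)}\otimes\mathfrak A)_1$ and choose $\varphi_m\in\mathcal G_m$ whose restrictions $\varphi_m^{(i)}$ are $(1/m,K_m^{(i)})$-order $0$.

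The core step is to upgrade each $\varphi_m$ to a map whose restrictions are \emph{genuinely} order $0$ into $A_\infty$. I would lift $\varphi_m$ via the module Choi--Effros theorem (Proposition \ref{ce}) to an admissible c.c.p.\ map $\widetilde\varphi_m\colon F_m\otimes\mathfrak A\to\ell^\infty(A)$ with coordinate components $(\varphi_{m,k})_k$ which are themselves admissible c.c.p.\ maps into $A$. The $(1/m,K_m^{(i)})$-order $0$ bound in $A_\infty$ then forces, after choosing a sufficiently fast tail index in $k$, the existence of an honestly order $0$ admissible c.p.\ map $\Phi_m^{(i)}\colon F_m^{(i)}\otimes\mathfrak A\to A_\infty$, exactly as in the converse direction of Lemma \ref{seq}. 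Summing over $i$ produces $\Phi_m\colon F_m\otimes\mathfrak A\to A_\infty$, which still $\varepsilon_m$-approximates $\iota_A$ on $\mathcal F_m$ after $\psi_m$ up to a harmless perturbation. The resulting net $(F_m,\psi_m,\Phi_m)$ verifies condition (ii) of Corollary \ref{rel}, whence $\dim_{\rm nuc}^\mathfrak A(A)\le n$.

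The main obstacle is precisely this bridge from the \emph{approximate} order $0$ restrictions allowed by Definition \ref{nuclear2} to the \emph{exact} order $0$ restrictions demanded by Corollary \ref{rel}: a priori the family $\mathcal G$ only guarantees that for each choice of $(K_i,\varepsilon_i)$ \emph{some} member of $\mathcal G$ has the required approximate order $0$ behavior on $F^{(i)}\otimes\mathfrak A$, and these members may all be different. Consolidating them into a single map with honest order $0$ restrictions relies on the countable saturation of $A_\infty$, implemented via the module Choi--Effros lift and the diagonal construction underlying Lemma \ref{seq}.
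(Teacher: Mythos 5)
Your overall route is the paper's own: the paper disposes of this lemma in one line, declaring it a restatement of Corollary \ref{rel}, whose proof is exactly the Lemma \ref{seq} machinery you invoke; and your easy direction (post-composing the upward maps with the isometric admissible $*$-homomorphism $\iota_A$, which preserves both admissibility and the $(\varepsilon,K)$-order-zero condition of Definition \ref{oz}) is correct.

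The trouble is the consolidation step you flag at the end but never actually carry out. Under the reading of Definition \ref{nuclear2} you adopt --- for each $i$ separately some member of $\mathcal G_m$ has $(\varepsilon_i,K_i)$-order-zero restriction to $F_m^{(i)}\otimes\mathfrak A$, with possibly different members for different $i$ --- the tools you cite cannot produce a single $\Phi_m$ with all restrictions order zero \emph{and} $\Phi_m\circ\psi_m\approx_{\varepsilon_m}\iota_A$ on $\mathcal F_m$. The definition only controls the total sum $\sum_i\varphi^{(i)}\circ\psi_m(a)$ for each individual member $\varphi\in\mathcal G_m$, not the individual summands; so splicing the $F^{(0)}$-restriction of one member with the $F^{(1)}$-restriction of another can destroy the approximation of $\iota_A$ (with $n=1$, one member may reproduce $a$ entirely through its $F^{(0)}$-summand and another entirely through its $F^{(1)}$-summand, and the spliced map sends $\psi_m(a)$ near $2a$). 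Countable saturation of $A_\infty$, the module Choi--Effros lift (Proposition \ref{ce}) and the diagonal construction of Lemma \ref{seq} upgrade the approximate order-zero behaviour of \emph{one} map to exact order zero; they do not merge different members whose identity-approximation is only controlled in the aggregate. The repair is the intended reading of Definitions \ref{nuclear} and \ref{nuclear2} (note the plural ``restrictions''): for each tuple $(K_0,\dots,K_n,\varepsilon_0,\dots,\varepsilon_n)$ there is a \emph{single} $\varphi\in\mathcal G$ whose restrictions to all the ideals are simultaneously $(\varepsilon_i,K_i)$-order zero. With that reading there is nothing to consolidate, and your remaining steps (lift by Proposition \ref{ce}, pass to components, reassemble as in Lemma \ref{seq}, then invoke Corollary \ref{rel}) coincide with the paper's argument. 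Two smaller points: when $\mathfrak A$ is infinite dimensional no sequence of compacta is cofinal among all compact subsets of $(F^{(i)}\otimes\mathfrak A)_1$, so your ``exhausting sequence of compact sets'' should be an increasing sequence of finite subsets of a countable dense set (dense union is all the Lemma \ref{seq} assembly needs); and the whole argument presupposes the standing separability hypotheses of Corollary \ref{rel} and Proposition \ref{ce}, which the lemma inherits.
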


For $\mathfrak A$-C*-algebras  $A, B$, two admissible $*$-homomorphisms $\sigma, \pi: A\to B$ are $\mathfrak A$-homotopic if there is a continuous path of admissible $*$-homomorphisms $\sigma_t: A\to B$ with  $\sigma_0=\sigma$ and $\sigma_1=\pi$. We say that $A$ and $B$ are $\mathfrak A$-homotopic, if there are  admissible $*$-homomorphisms $\sigma: A\to B$ and  $\pi: B\to A$, with $\pi\circ\sigma$ and $\sigma\circ\pi$ being $\mathfrak A$-homotopic to ${\rm id}_A$ and ${\rm id}_B$, respectively. 

\begin{lemma}
Let $A$ and $B$ be separable. If admissible $*$-homomorphisms $\sigma_0, \sigma_1: A\to B$ are $\mathfrak A$-homotopic, $\sigma_0$ is injective, and $\sigma_1(A)$ is $\mathfrak A$-QD, then $A$ is $\mathfrak A$-QD. 
\end{lemma}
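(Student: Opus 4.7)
The plan is to adapt Voiculescu's proof of the homotopy invariance of quasidiagonality to the module setting. First, I would use the $\mathfrak{A}$-homotopy $\sigma_t\colon A\to B$ to construct an admissible $*$-homomorphism $\Sigma\colon A\to C([0,1],B)$ by $\Sigma(a)(t):=\sigma_t(a)$. Since $\operatorname{ev}_0\circ\Sigma=\sigma_0$ is injective, $\Sigma$ is an injective admissible $*$-homomorphism, hence isometric; this identifies $A$ isometrically with the $C^*$-submodule $\Sigma(A)\subseteq C([0,1],B)$, which inherits a pointwise $\mathfrak{A}$-action from $B$.

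Next, I would introduce the intermediate $\mathfrak{A}$-submodule
\[
\tilde A:=\operatorname{ev}_1^{-1}\bigl(\sigma_1(A)\bigr)=\bigl\{f\in C([0,1],B): f(1)\in\sigma_1(A)\bigr\},
\]
which contains $\Sigma(A)$ and sits in the split short exact sequence
\[
0\to C_0([0,1),B)\longrightarrow \tilde A\xrightarrow{\operatorname{ev}_1}\sigma_1(A)\to 0,
\]
with splitting $s(x)(t):=x$. The goal is to prove $\tilde A$ is $\mathfrak{A}$-QD; then $A\cong\Sigma(A)$ inherits $\mathfrak{A}$-QD, because restricting the approximating maps from Definition \ref{qd} to $\Sigma(A)$ via the admissible module embedding $A\xrightarrow{\Sigma}\Sigma(A)\hookrightarrow \tilde A$ (admissible by Definition \ref{adm}(i) and (iv)) preserves approximate multiplicativity and approximate isometry, the latter because $\Sigma$ is isometric.

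To establish $\mathfrak{A}$-QD of $\tilde A$, I would combine two ingredients: (a) the hypothesis that $\sigma_1(A)$ is $\mathfrak{A}$-QD, and (b) a module analog of Voiculescu's classical theorem that the cone over a separable $C^*$-algebra is QD, applied to $C_0([0,1),B)\cong C_0((0,1],B)$. Given $\mathcal{F}\ssubset\tilde A$ and $\varepsilon>0$, one decomposes each $f\in\mathcal{F}$ as $f=s(\operatorname{ev}_1(f))+\bigl(f-s(\operatorname{ev}_1(f))\bigr)$ via the section, approximates the quotient summand using (a) into some $\mathbb{M}_k(\mathfrak{A})$, approximates the kernel summand using (b) into some $\mathbb{M}_{k'}(\mathfrak{A})$, and assembles a single admissible c.c.p. map $\tilde A\to \mathbb{M}_{k+k'}(\mathfrak{A})$ as a block-diagonal direct sum; approximate multiplicativity survives because cross-terms between the kernel and the image of the section vanish modulo the kernel ideal.

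The principal obstacle is (b), namely the $\mathfrak{A}$-QD of the cone $C_0((0,1],B)$. Voiculescu's classical proof tensors approximately multiplicative and isometric c.c.p. maps out of $C_0((0,1])$ (classically QD) with the identity on $B$, and adapting this to the module category requires all approximants to be \emph{admissible} in the sense of Definition \ref{adm}. One obtains them by taking the standard partition-of-unity c.c.p. module maps $C_0((0,1])\to\mathbb{M}_k$ (module maps with trivial $\mathfrak{A}$-action, hence admissible by Definition \ref{adm}(i)) and tensoring with admissible maps derived from $B$, invoking the composition rule of Definition \ref{adm}(iv) to produce admissible c.c.p. maps into $\mathbb{M}_k(\mathfrak{A})$.
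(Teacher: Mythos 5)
Your reduction to the mapping cylinder $\tilde A=\{f\in C([0,1],B):f(1)\in\sigma_1(A)\}$ is fine as far as it goes (the embedding $\Sigma$ is isometric and restriction of admissible maps along the inclusion is admissible by Definition \ref{adm}), but the two ingredients you need to conclude that $\tilde A$ is $\mathfrak A$-QD are exactly where the argument breaks. First, ingredient (b): within this paper the statement that the cone $C_0([0,1),B)\cong CB$ is $\mathfrak A$-QD is Corollary \ref{aqd}, which is \emph{deduced from} the very lemma you are proving, so invoking it here is circular; and your sketched independent proof of it is not valid. Tensoring partition-of-unity maps $C_0((0,1])\to\mathbb M_k$ with ``admissible maps derived from $B$'' presupposes that $B$ admits approximately multiplicative, approximately isometric admissible maps into some $\mathbb M_{k'}(\mathfrak A)$ --- i.e.\ that $B$ is $\mathfrak A$-QD --- which is not assumed. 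The whole depth of Voiculescu's cone theorem is that $CB$ is quasidiagonal even when $B$ is not (e.g.\ $B=\mathcal O_2$ classically), and classically that theorem is itself a consequence of homotopy invariance; there is no elementary tensor argument for it.

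Second, even granting (a) and (b), the assembly step is flawed: quasidiagonality does not pass to extensions, split or not, by a block-diagonal sum of the two approximations. As written, your second block applies $\psi$ to $f-s(\mathrm{ev}_1(f))$, and $f\mapsto f-s(\mathrm{ev}_1(f))$ is not positive, so the assembled map is not c.c.p.; if instead you compress into the ideal by an approximate unit $e$ of $C_0([0,1),B)$, then approximate multiplicativity requires $e^{1/2}f(1-e)ge^{1/2}$ to be small, which fails because this ideal has no approximate unit of projections. Your justification that ``cross-terms between the kernel and the image of the section vanish modulo the kernel ideal'' is precisely the wrong statement: the cross-terms \emph{lie in} the kernel and controlling them is the entire difficulty (this is why the Toeplitz extension of two QD algebras is not QD). Note also that proving $\tilde A$ is $\mathfrak A$-QD from $\mathfrak A$-QD of $\sigma_1(A)$ is itself an instance of homotopy invariance, since the cylinder deformation-retracts onto its base, so the reduction does not actually lower the difficulty. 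The paper instead adapts the Brown--Ozawa/Voiculescu argument directly: it represents $B$ in $\mathbb B(K\otimes\mathfrak A)$, discretizes the homotopy into $\sigma_{j/n}$, produces a chain $q\leq q_0\leq\cdots\leq q_n$ of finite rank positive contractions with $q_{j+1}q_j=q_j$ almost commuting with $\sigma_{j/n}(\mathcal F)$ (using $\mathfrak A$-quasidiagonality of $\sigma_1(A)$ to make $q_n$ a projection), and builds the projection $p=vv^*$ for the direct sum representation $\bigoplus_j\sigma_{j/n}$; some such quantitative interpolation along the path is what your proposal is missing.
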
	
\begin{proof}
We just sketch the proof to highlight where the module structure has to be considered. By \cite[Lemma 6.4]{lan}, we could find a faithful essential representation $B\subseteq \mathbb B(K\otimes \mathfrak A)$, for some separable Hilbert space $K$. Given finite subset $\mathcal F\ssubset A$ and $\varepsilon>0$, there is a separable Hilbert space $H$, an admissible $*$-homomorphisms $\pi: A\to \mathbb B(H\otimes \mathfrak A)$, and a finite rank projection $p\in \mathbb B(H\otimes \mathfrak A)$ with,
$$\|[p, \pi(a)]\|<\varepsilon, \ \ \|p\pi(a)p\|>\|a\|-\varepsilon,\ \ (a\in\mathcal F).$$
Given a continuous path $(\sigma_t)$ of admissible $*$-homomorphisms from $\sigma_0$ to $\sigma_1$, and $\delta>0$, by injectivity of $\sigma_0$, 
$$\|a\|=\|\sigma_0(a)\|=\sup_{q}\|q\sigma_0(a)q\|,$$
where the supremum runs over all finite rank projections $q\in \mathbb B(H\otimes \mathfrak A)$. Thus there is such a projection with,
$$\|a\|-\varepsilon\leq\|q\sigma_0(a)q\|,\ \ (a\in\mathcal F).$$
Apply \cite[Lemma 7.3.1]{bo} to the separable ideal $\mathbb F(K\otimes \mathfrak A)\unlhd
\mathbb B(K\otimes \mathfrak A)$ consisting of finite rank operators to find positive norm one, finite rank operators $q_0,\cdots, q_n$ in $\mathbb B(K\otimes \mathfrak A)$ with $q\leq q_0\leq \cdots\leq q_n\leq 1,$, $q_{j+1}q_j=q_j$, $\|[q_j, \sigma_{j/n}(a)]\|<\delta$, for each $0\leq j\leq n$, $a\in\mathcal F$, and use \cite[Lemma 3.4]{a3} to $\sigma_1(A)\subseteq \mathbb B(K\otimes \mathfrak A)$ observe that 
$q_n$ could be chosen to be a projection. Put $$v:=q_0^{\frac{1}{2}}\oplus (q_1-q_0)^{\frac{1}{2}}\oplus\cdots\oplus(q_n-q_{n-1})^{\frac{1}{2}},$$
and put $p:=vv^*$ and $\pi:=\sigma_0\oplus\sigma_{\frac{1}{n}}\oplus\sigma_{\frac{2}{n}}\cdots\sigma_1$. The rest of the proof goes verbatim to that of \cite[Proposition 7.3.5]{bo}.
\end{proof}

\begin{corollary} \label{aqd}
If $A$ is separable, the cone $CA$ is always $\mathfrak A$-QD.
\end{corollary}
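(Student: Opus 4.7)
The plan is to apply the previous lemma with $A$ replaced by $CA$ and $B:=CA$, taking advantage of the standard cone contraction. For $\sigma_0$ I would take $\mathrm{id}_{CA}$, which is an injective admissible $*$-homomorphism and a module map since the $\mathfrak A$-action on $CA=C_0(0,1]\otimes A$ only touches the second tensor factor. For $\sigma_1$ I would take the zero homomorphism, whose image $\{0\}$ is trivially $\mathfrak A$-QD: the zero map into $\mathbb M_1(\mathfrak A)$ is admissible c.c.p., and the approximate multiplicativity and approximate isometry conditions of Definition \ref{qd} are vacuous on the zero algebra.

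To connect $\sigma_0$ and $\sigma_1$ I would use the classical reparametrization homotopy: define $\sigma_t: CA\to CA$ by
\[
(\sigma_t f)(s) \,:=\, f((1-t)s), \qquad f\in C_0(0,1]\otimes A,\ s\in(0,1],\ t\in[0,1].
\]
Each $\sigma_t$ is a $*$-homomorphism, and since reparametrization in $s$ commutes with the $\mathfrak A$-action on the $A$-factor, it is also a module map, hence admissible by Definition \ref{adm}(i). Point-norm continuity of $t\mapsto\sigma_t$ follows at once from the uniform continuity on $[0,1]$ of each $f\in CA$: for $f$ of elementary tensor form the estimate is immediate, and the general case follows by density together with the uniform bound $\|\sigma_t\|\le 1$. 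At $t=0$ one recovers $\mathrm{id}_{CA}$, and at $t=1$ one has $(\sigma_1 f)(s)=f(0)=0$, so $\sigma_1\equiv 0$.

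Finally, since $A$ is separable so is $CA$, and the hypotheses of the preceding lemma are met. Its conclusion gives that $CA$ is $\mathfrak A$-QD, as claimed. There is really no substantial obstacle here once the previous lemma is in place; the only mild verification is that the reparametrization map respects the module structure and is admissible, which is immediate from the construction of the $\mathfrak A$-action on $CA$.
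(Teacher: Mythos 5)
Your proposal is correct and is essentially the paper's own route: the corollary is meant to follow from the preceding homotopy lemma by taking $\sigma_0={\rm id}_{CA}$, $\sigma_1=0$ (whose image $\{0\}$ is trivially $\mathfrak A$-QD), and the reparametrization homotopy $(\sigma_t f)(s)=f((1-t)s)$, which is a path of admissible $*$-homomorphisms and module maps since the $\mathfrak A$-action only affects the $A$-factor of $CA$. Your verification of point-norm continuity and of the module/admissibility properties is exactly the routine checking the paper leaves implicit.
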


\begin{remark}\label{mod}
	$(i)$ In the above lemma, if the  net of approximately multiplicative and approximately isometric c.c.p. maps $\phi_n: \sigma_1(A) \to \mathbb M_{k(n)}(\mathfrak A)$  could be arranged to be module maps, same holds for $A$: take a bounded approximate identity $(p_i)$ of $\mathbb K(K)$ consisting of finite rank projections and put $h:=\sum_i 2^{-i}(p_i\otimes 1_\mathfrak A)$. Then $h\in \mathbb F(K\otimes \mathfrak A)$ is a strictly positive element, and,
	$$h(\xi\otimes\alpha\beta)=\sum_i 2^{-i}(p_i\xi\otimes \alpha\beta)=\sum_i 2^{-i}(p_i\xi\otimes \alpha)\cdot\beta=h(\xi\otimes\alpha)\cdot\beta,$$ 
	i.e., $h$ is a module map. For $n\geq 1$, let $f_n\in C_0(0,1]$ be zero on $(0,2^{-n}]$, one on $[2^{n-1},1]$, and linear elsewhere, then for $e_n:=f_n(h),$ since $f_n(0)=0$, $e_n$ is a limit of polynomials in $h$ without constant term, thus $\mathbb F(K\otimes \mathfrak A)$ has a quasi-central approximate identity consisting of module maps. In particular, the finite rank operators $q_0,\cdots,q_{n-1}$ could be chosen to be module maps. Now, since the approximating sequence $(\phi_n)$ for $\sigma_1(A)$ is also assumed to be a module map, the corresponding projection $q_n$ could also be arranged to be a module map on $K\otimes \mathfrak A$. Next, $p=vv^*$ could be represented as an $n\times n$ matrix with components being 0 or of the form $(q_i-q_{i-1})^{\frac{1}{2}}(q_j-q_{j-1})^{\frac{1}{2}}$, for $0\leq i,j\leq n$, with $q_{-1}:=0$ (see the proof of \cite[Proposition 7.3.5]{bo}). Therefore, $p$ is also a module map on $H\otimes\mathfrak A$, as required.
	
	$(ii)$ Since $CA$ is homotopic to zero, by $(i)$ above, we may always choose the  approximating sequence $(\phi_n)$ for $CA$ to be module maps. 	       
\end{remark}

\begin{lemma} \label{pi}
Let $A$ be a unital, simple, $\mathfrak A$-nuclear, and purely infinite. Then there exists a
sequence c.c.p. module maps $\phi_n : A\to A$ which factorize through $\mathbb M_{k(n)}(\mathfrak A)$ as $\psi_n=\eta_n\circ\theta_n$
with $\theta_n$ c.c.p. module maps and $\eta_n$ 
$*$-homomorphism and module map, such that  $\Psi :=
(\psi_n): A\to A_\omega$ is a c.c.p. order 0 module map.
\end{lemma}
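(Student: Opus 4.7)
The approach is to produce the downward c.c.p. module maps $\theta_n$ from $\mathfrak{A}$-nuclearity of $A$, to construct the upward $*$-homomorphism module maps $\eta_n:\mathbb{M}_{k(n)}(\mathfrak{A})\to A$ using a unital matrix-unit system inside $A$ afforded by pure infiniteness, and finally to verify that the compositions assemble to an order-zero module map into $A_\omega$. Unlike in the ordinary nuclear approximation, we are prepared to forfeit the convergence $\sigma_n\circ\theta_n\to \mathrm{id}_A$: the point-norm limit of $(\eta_n\circ\theta_n)$ inside $A_\omega$ need only be an order-zero map, not the canonical inclusion.

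First I would fix an increasing chain of finite subsets $\mathcal{F}_n\ssubset A$ with dense union and a sequence $\varepsilon_n\searrow 0$. Using $\mathfrak{A}$-nuclearity of $A$ (Definition \ref{nuc}), I choose c.c.p. admissible module maps $\theta_n:A\to \mathbb{M}_{k(n)}(\mathfrak{A})$ and $\sigma_n:\mathbb{M}_{k(n)}(\mathfrak{A})\to A$ with $\|\sigma_n\circ\theta_n(a)-a\|<\varepsilon_n$ for all $a\in\mathcal{F}_n$; these provide the candidate downward maps, subject to refinement in the third step.

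Next I construct the $*$-homomorphism module maps $\eta_n$. Because $A$ is unital, simple and purely infinite, the unit $1_A$ is an infinite projection and can be decomposed into $k(n)$ mutually orthogonal subprojections, each Murray--von Neumann equivalent to $1_A$; this produces a full system of matrix units $\{s_{ij}^{(n)}\}_{i,j=1}^{k(n)}\subseteq A$ realising a unital copy of $\mathbb{M}_{k(n)}$ inside $A$. Setting $\eta_n(e_{ij}\otimes\alpha):=s_{ij}^{(n)}\cdot\alpha$ and extending linearly, the compatibility condition $(a\cdot\alpha)b=(ab)\cdot\alpha$ (which follows from $(a\cdot\alpha)^*=a^*\cdot\alpha^*$, as in the proof of Lemma \ref{homo}) ensures that $\eta_n$ is a unital $*$-homomorphism and a right $\mathfrak{A}$-module map from $\mathbb{M}_{k(n)}(\mathfrak{A})$ to $A$.

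The remaining, and most delicate, point is to arrange that $\Psi:=(\eta_n\circ\theta_n):A\to A_\omega$ is order zero. Since each $\eta_n$ is a $*$-homomorphism, this reduces to asymptotic orthogonality $\|\theta_n(a)\theta_n(b)\|\to 0$ along $\omega$ whenever $ab=0$ in $A$. I would obtain this by a Kirchberg-type argument in the central sequence algebra $A_\omega\cap A'$: $\mathfrak{A}$-nuclearity together with pure infiniteness of $A$ produces a Cuntz-type family of isometries inside $A_\omega\cap A'$ compatible with the $\mathfrak{A}$-action, which can be used to absorb the c.c.p. defect of $\theta_n$ and replace it, point-norm on each $\mathcal{F}_n$, by an approximately multiplicative c.c.p. module map. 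A standard diagonal extraction then yields (after relabelling) c.c.p. admissible module maps $\theta_n$ enjoying the required asymptotic orthogonality.

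With the refined $\theta_n$ and the $\eta_n$ of the second step, setting $\psi_n:=\eta_n\circ\theta_n$ gives c.c.p. module maps $A\to A$ which factor through $\mathbb{M}_{k(n)}(\mathfrak{A})$ with $\theta_n$ c.c.p. module and $\eta_n$ a $*$-homomorphism and module map, and the induced $\Psi=(\psi_n):A\to A_\omega$ is c.c.p., module, and order zero. The main obstacle is precisely the Kirchberg-style asymptotic multiplicativity step, where one must ensure that the absorbing isometries in $A_\omega\cap A'$ respect the right $\mathfrak{A}$-module structure so that every intermediate map remains admissible; the other steps are essentially formal consequences of $\mathfrak{A}$-nuclearity and the matrix structure inside purely infinite simple $A$.
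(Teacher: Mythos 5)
Your construction of the upward maps is close in spirit to the paper's, but the two places where your argument actually has to produce something are problematic. First, the order-zero step, which you correctly identify as the crux, is not carried out and the proposed mechanism would fail: you want to upgrade the downward maps coming from the $\mathfrak A$-nuclearity approximation of ${\rm id}_A$ to approximately multiplicative c.c.p. module maps $\theta_n\colon A\to\mathbb M_{k(n)}(\mathfrak A)$ by a ``Kirchberg-type'' absorption with isometries in $A_\omega\cap A'$. No such argument is given, and already for $\mathfrak A=\mathbb C$ (which the lemma covers, since $\mathbb C$-nuclear means nuclear) a non-degenerate approximately multiplicative sequence of c.c.p. maps into matrix algebras cannot exist: it would induce a $*$-homomorphism from the simple algebra $A$ into the ultraproduct $\prod_\omega\mathbb M_{k(n)}$, hence an embedding, while every corner of that ultraproduct is finite (isometries in corners are unitaries), contradicting the existence of infinite projections in $A$. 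Having forfeited the convergence $\sigma_n\circ\theta_n\to{\rm id}_A$, the only way your maps can become approximately multiplicative is by degenerating, which makes $\Psi=0$ and renders the construction vacuous. The idea you are missing is the paper's: it does not use the nuclearity approximation for the downward maps at all, but exploits that the cone $CA$ is $\mathfrak A$-QD with \emph{module} approximating maps (Corollary \ref{aqd} and Remark \ref{mod}, because $CA$ is homotopic to zero), and composes these approximately multiplicative maps on $CA$ with the canonical order-zero embedding $a\mapsto{\rm id}\otimes a$ of $A$ into $CA$. The resulting $\theta_n\colon A\to\mathbb M_{k(n)}(\mathfrak A)$ are c.c.p. module maps which assemble to an isometric order-zero module map into the ultraproduct, and composing with the $*$-homomorphisms $\eta_n$ preserves order-zeroness; no central-sequence absorption is needed, and this is exactly how the tension with pure infiniteness (non-quasidiagonality of $A$ itself) is circumvented.

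Second, your construction of $\eta_n$ contains a false claim and an unjustified step. The unit of a unital simple purely infinite C*-algebra cannot in general be written as a sum of $k(n)$ mutually orthogonal projections each equivalent to $1_A$: that would force $(k(n)-1)[1_A]=0$ in $K_0(A)$, which fails already for $A=\mathcal O_\infty$; so there is no unital copy of $\mathbb M_{k(n)}$ in $A$ in general. What pure infiniteness does give, and what the paper uses via a copy of $\mathcal O_\infty\subseteq A$, is a (generally non-unital) $*$-monomorphism $\zeta_n\colon\mathbb M_{k(n)}(\mathbb C)\hookrightarrow A$, which suffices since $\eta_n$ need not be unital. Moreover, your formula $\eta_n(e_{ij}\otimes\alpha)=s_{ij}\cdot\alpha$ is not obviously multiplicative in the $\mathfrak A$-variable: using $(a\cdot\alpha)b=(ab)\cdot\alpha$ one gets $(s_{ij}\cdot\alpha)(s_{kl}\cdot\beta)=\delta_{jk}\,s_{il}\cdot(\beta\alpha)$, with the $\mathfrak A$-factors reversed, so some additional structure or argument is required; the paper handles this by defining $\eta_n$ as $\zeta_n\otimes{\rm id}_{\mathfrak A}$ followed by the identification $A\otimes_{\mathfrak A}\mathfrak A\cong A$, using the tensor-product machinery of \cite{a2}, rather than by the bare formula. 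In short, the upward maps are repairable, but the asymptotic order-zero property of the downward maps — the heart of the lemma — is missing, and the cone factorization is the device that supplies it.
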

\begin{proof}
By Remark \ref{mod}, the cone $CA$ is $\mathfrak A$-QD with approximately multiplicative and approximately isometric sequence $\phi_n: CA\to \mathbb M_{k(n)}(\mathfrak A)$ consisting of module maps. Let $$\Phi:=(\phi_n): CA\to \prod_{n\in\omega} \mathbb M_{k(n)}(\mathfrak A)$$ be the corresponding isometric c.c.p. order 0 module map. This composed with the isometric $*$-homomorphism and module map $\iota: a\mapsto {\rm id}\otimes a$ gives an isometric c.c.p. order 0 module map $\Phi\circ \iota$ on $A$. Write $\Phi\circ \iota=:(\theta_n)$, where $\theta_n: A\to \mathbb M_{k(n)}(\mathfrak A)$ is c.c.p. order zero module map.   
 	
 Next, since $A$ is purely infinite and unital, it contains a copy of the Cuntz algebra $\mathcal O_\infty$, thus these is a $*$-monomorphism $\zeta_n: \mathbb M_{k(n)}(\mathbb C)\hookrightarrow A$. Consider $\mathfrak A$ as a {\it left} module over itself, and construct the $*$-homomorphism and module map $$q\circ(\zeta_n\otimes {\rm id}_\mathfrak A): \mathbb M_{k(n)}(\mathfrak A)\to A\otimes \mathfrak A\twoheadrightarrow	 A\otimes_\mathfrak A \mathfrak A,$$ where $q$ is the quotient map, and let us observe that the range is $*$-isomorphic to $A$. Indeed, given $x:=\Sigma_i a_i\otimes \alpha\in A\odot_\mathfrak A \mathfrak A,$ for $y:=\Sigma_i a_i\cdot \alpha\in A$, we have $x=y\otimes  1_{\mathfrak A}$, thus the map $\Sigma_i a_i\otimes \alpha\mapsto \Sigma_i a_i\cdot \alpha$ is well-defined $*$-homomorphism, module map, and isometry: $A\odot_\mathfrak A \mathfrak A\to A$, which then extends by universal property \cite[Proposition 3.4]{a2} to a  $*$-isomomorphism and module map $\pi: A\otimes_\mathfrak A^{max}\mathfrak A\to A$. Finally, by \cite[Corollary 3.12]{a2}, $A\otimes_\mathfrak A^{max}\mathfrak A$ is isomorphic to $A\otimes_\mathfrak A^{min}\mathfrak A=:A\otimes_\mathfrak A\mathfrak A$. Put $\eta_n:=\pi\circ q\circ\zeta_n$ and $\psi_n:=\eta_n\circ \theta_n$.        
\end{proof}

\section{Vector valued traces and module strict comparison} \label{3}

In this section, we define strict comparison for $\mathfrak A$-C*-algebras. First let us recall the notion of module retracts from \cite{a3} (c.f., \cite[Page 55]{lan}). 

\begin{definition} \label{ret}
	An $\mathfrak A$-retraction is a positive (and so, automatically c.p.) right $\mathfrak A$-module map $\tau: A\to \mathfrak A$ such that
	
	$(i)$ Im$(\tau)$ is strictly dense in $\mathfrak A$, that is, for each $\beta\in\mathfrak A$, there is a net $(a_i)\subseteq A$ such that
	$$\|\tau(a_i\cdot\alpha)-\beta\alpha\|\to 0\ \ (\alpha\in\mathfrak A),$$
	
	$(ii)$ for some bounded approximate identity $(e_i)$ of $A$, $\tau(e_i)\to p$, for some projection $p$ in $\mathfrak A$, in the strict topology.
	
	When $\mathfrak A$ is a von Neumann algebra, a weak $\mathfrak A$-retraction is a positive right $\mathfrak A$-module map $\tau: A\to \mathfrak A$ such that Im$(\tau)$ is ultra-strongly dense in $\mathfrak A$, and for some bounded approximate identity $(e_i)$ of $A$, $\tau(e_i)\to p$, for some projection $p$, ultra-strongly in $\mathfrak A$.
	We denote the set of all (resp., weak) $\mathfrak A$-retractions by $\mathcal R(A, \mathfrak A)$ (resp., $\mathcal R^w(A, \mathfrak A)$).
\end{definition}

There is a one-one correspondence between $\mathfrak A$-retractions and c.c.p. idempotents from the multiplier algebra $M(A)$ to $\mathfrak A$, strictly continuous on the unit ball. Note that an $\mathfrak A$-retraction is automatically a bimodule map if the left module action is defined by,
$$\alpha\cdot a:=(a^*\cdot\alpha^*)^*,\ \ (\alpha\in \mathfrak A, a\in A).$$

An $\mathfrak A$-{\it trace} is an  $\mathfrak A$-retraction $\tau$ satisfying, $\tau(ab)=\tau(ba)$, for $a,b\in A$. When $A$ and $\mathfrak A$ are unital, we further assume that $\tau(1_A)=1_\mathfrak A$ (in this case, conditions $(i)$, $(ii)$ above are automatic). We say that $\tau$ is {\it faithful} if $\tau(a^*a)=0$ implies $a=0$. 

The set of all $\mathfrak A$-traces is denoted by $\mathcal T_\mathfrak A (A)$. If $\mathfrak Z:=Z(\mathfrak A)$ is the center of $\mathfrak A$, we use $\mathcal T_\mathfrak Z (A)$ (resp., $\mathcal T^w_\mathfrak Z (A)$) to denote those maps $\tau: A\to \mathfrak A$, called $\mathfrak Z$-traces, which satisfy all the conditions of a $\mathfrak A$-trace, except that they are only supposed to be a $\mathfrak Z$-module map. Similarly (weak) retractions which are only a $\mathfrak Z$-module map are denoted by $\mathcal R_\mathfrak Z(A, \mathfrak A)$ (resp., $\mathcal R^w_\mathfrak Z(A, \mathfrak A)$). When $A$ is unital and $\mathfrak A$ is a finite von Neumann algebra, there is a center valued trace $tr: \mathfrak A\to \mathfrak Z$ which is also a $\mathfrak Z$-module map \cite[Theorm 2.6]{t}, hence the convex set of $\mathfrak Z$-traces into $\mathfrak Z$   is a ``factor'' of the convex set of  $\mathfrak Z$-traces into $\mathfrak A$ (i.e., there is a surjective affine map from the second onto the first). 

Recall that for $x,y\in A_+$, $x$ is Cuntz-subequivalent to $y$, writing $x	\lesssim y$, if there is a sequence $(u_n)\subseteq A$ such that $\|u_nxu_n^*-y\|\to 0$ \cite{c}. Also $x$ is Cuntz-Pedersen subequivalent to $y$, writing $x	\lessapprox y$, if there is a sequence $(u_n)\subseteq A$ such that $x=\sum_n u_nu_n^*$, $z=\sum_n u_n^*u_n$, for some $z\leq y$ \cite{cp}. When $x	\lesssim y$ and $y	\lesssim x$, we write $x\sim y$ and say that $x$ and $y$ are Cuntz-equivalent. The Cuntz-Perdersen equivalence $x\approx y$ is defined similarly. 

An element $x\in A_+$ is said to be finite if $y\approx x$ and $y\leq x$ implies $y=x$, for each $y\in A_+$. $A$ is called {\it finite} ({\it semifinite}) in the sense of Cuntz-Pedersen if every (nonzero) positive element in $A$ is finite (dominates a nonzero finite element).

Following \cite{cp}, we write $A_0:=\{x-y: x,y\in A_+, x\approx y\}$ and $A^q_{sa}:= A_{sa}/A_0$, and put $A^q=A^q_{sa}+iA^q_{sa}$. Denote the quotient map by $q: A_{sa}\twoheadrightarrow A^q_{sa};\ a\mapsto \dot a$. This extends to a surjection  $q: A\twoheadrightarrow A^q$, and we again write $q(a)=\dot a$. 
There is a one-one correspondence (indeed an affine isomorphism of convex sets) between $\mathcal R(A^q, \mathfrak A)$  and $\mathcal T_\mathfrak A (A)$, sending  $\mathfrak A$-retraction $\phi$ on $A^q$ to  $\mathfrak A$-trace $\tau$ on $A$, defined by $\tau(a)=\phi(\dot a)$.

Let $A$ be an $\mathfrak A$-C*-bimodule such that the compatibility conditions of Section \ref{1} holds for actions from both sides (this holds if $A$ is a right $\mathfrak A$-C*-module and the left action is induced from the right action by taking adjoints). Let us recall (and define a weaker version of) the notion of central action from \cite{a2}. Let us put $\mathfrak Z=Z(\mathfrak A)$, as above. We say that $A$ is a (resp., {\it weakly}) {\it central} $\mathfrak A$-C*-bimodule if $Z(A)$ is an $\mathfrak A$-submodule (resp., $\mathfrak Z$-submodule) of $A$. When $A$ is unital, this is equivalent to requiring that $\alpha\cdot 1_A, 1_A\cdot\alpha\in Z(A)$, for each $\alpha\in \mathfrak A$ (resp., each $\alpha\in \mathfrak Z$). We also say that $A$ is (resp., {\it weakly}) {\it symmetric} if $\alpha\cdot a=a\cdot\alpha$, for each $a\in A$ and each $\alpha\in \mathfrak A$ (resp., each $\alpha\in \mathfrak Z$). When $A$ is unital and (weakly) central, this is equivalent to requiring that $\alpha\cdot 1_A= 1_A\cdot\alpha$, for each $\alpha\in \mathfrak A$ (resp., each $\alpha\in \mathfrak Z$). 

Assume that the left action is induced from the right action by taking adjoints. If $A$ is (resp., weakly) symmetric and (resp., weakly) central,  then $\mathfrak A_+\cdot A_+\subseteq A_+$ (resp., $\mathfrak Z_+\cdot A_+\subseteq A_+$), and the same for the right action: given $\alpha=\beta\beta^*$ in $\mathfrak A$ (resp., in $\mathfrak Z$) and $a=b^*b\in A$, we have,
$$a\cdot\alpha=\alpha\cdot a=(\beta\beta^*)\cdot b^*b=\beta\cdot(\beta^*\cdot b^*)b =(\beta^*\cdot b^*)b\cdot\beta =(b\cdot\beta)^*(b\cdot\beta)\geq 0,      
$$
for each $a\in A$ and each $\alpha\in \mathfrak A$ (resp., each $\alpha\in \mathfrak Z$).

We need a version of Hahn decomposition for vector valued real linear maps. To prove this, a positive Cohen factorization theorem for Banach lattices is needed. First let us recall the following result of de Jeu and Jiang \cite[Theorem 5.3]{jj}.

\begin{lemma} \label{jj}
 Let $A$ be an ordered Banach algebra with a closed positive cone and a
positive left approximate identity $(e_i)$ of bound $M \geq 1$, let $X$ be an ordered Banach
space with a closed positive cone, and $\pi: A\to \mathbb B(X)$ be a positive continuous representation, that is, $\pi(A_+)X_+\subseteq X_+$. Let $S$ be a non-empty subset of $X_+$ such that $\pi(e_i)s\to s$, uniformly
on $S$, as $i\to\infty$, and suppose that $S$ is bounded or that $(e_i)_{i\in I}$ is commutative, that is, $e_ie_j=e_je_i$, for each $i,j\in I$.
Suppose that there exists a unital ordered Banach superalgebra $B\supseteq A$ such that,

\vspace{.2cm}
($a$) the restricted positive continuous representation $\pi_e$ of $A$ on the span closure $X_e$ of $\pi(A)X$ extends to a
positive continuous unital representation of $B$ on $X_e$,

\vspace{.1cm}
($b$) $B_+$ is inverse closed in $B$, that is, $b^{-1}\in B_+$, for each invertible element $b\in B_+$.

\vspace{.2cm}
Then for every $\varepsilon > 0$, $\delta> 0$, and $0 < r < (M + 1)^{-1}$, every
integer $n_0 \geq 1$, and every sequence $(t_n)\subseteq (1, \infty)$ converging to $\infty$,
there exist $a\in A$ and a sequence of maps $x_n: S \to X$, such that,

\vspace{.2cm}
(1) $s = \pi(a^n)x_n(s)$, for $n\geq 1$ and $s\in S$,

\vspace{.1cm}
(2) $\|a\|\leq M$, 

\vspace{.1cm}
(3) each $x_n$ is a uniformly continuous homeomorphism of $S$ onto $x_n(S)$, with
the restricted map $\pi(a^n) : x_n(S) \to S$ as its inverse,

\vspace{.1cm}
(4) $\|s - x_n(s)\|\leq\varepsilon$, for $n\leq n_0$ and $s\in S$, and $\|x_n(s)\|\leq t_n^n{\rm max}(\|s\|,\delta)$ for  $n \geq 1$ and $s\in S$,

\vspace{.1cm}
(5) $x_n(s_1 + s_2) = x_n(s_1) + x_n(s_2)$,
for $n\geq 1$, whenever $s_1, s_2, s_1 + s_2\in S$, and $x_n(\lambda s) = \lambda x_n(s)$, for $n\geq 1$ and $\lambda\in \mathbb C$, whenever $s, \lambda s\in S$,

\vspace{.1cm}
(6) there exists a sequence $(u_i)\subseteq \{e_i: i\in I\}$ such that $a= \sum_{i=1}^{\infty} r(1-r)^{i-1}u_i$ is an element of the closed convex hull of $\{u_i\}$ in $A$, and for every $k \geq  0$, the element $b_k:= (1 - r)^k 1_B + \sum_{i=1}^{k} r(1-r)^{i-1}u_i$ of $B$ is
in the convex hull of $\{1_B, u_1, \cdots, u_k\}$ in $B$, which is invertible in $B$,
and its inverse lies in the unital Banach subalgebra of $B$ generated by
$\{u_1, \cdots, u_k\}$, and $\pi(b_k^{-n}s)\to x_n(s)$, as $k\to\infty$, for $n\geq 1$ and $s\in S$. Moreover, there exists a monotone (strictly) increasing countable chain $(i_k)\subseteq I$ such that $u_k = e_{i_k}$, for $k\geq 1$ (whenever $I$ has no lasgest element),

\vspace{.1cm}
(7) if $S$ is (totally) bounded, so is $x_n(S)$, for each $n\geq  1$,  and $\pi(e_i)x_n(s)\to x_n(s)$,  uniformly on $S$, for $n\geq 1$, 

\vspace{.1cm}
(8) $a\in A_+$,

\vspace{.1cm}
(9) $x_n(S)\subseteq X_{e+}$, for $n\geq 1$.  
\end{lemma}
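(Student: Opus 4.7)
The plan is to adapt the classical Cohen factorization argument to the ordered Banach-algebra setting, using the unital superalgebra $B$ together with the inverse-closedness of $B_+$ in place of the usual Neumann-series invertibility argument. First I would construct the sequence $u_1, u_2, \ldots$ as elements of $\{e_i : i \in I\}$ so that the chosen indices $i_k$ form a strictly increasing (cofinal) chain in $I$, and so that for each $k$ the operator $\pi(u_k)$ acts near the identity on a finite set of test vectors prepared from $S$ and from the data already manufactured at earlier stages. The existence of such choices relies on the hypothesis $\pi(e_i)s \to s$ uniformly on $S$ together with either the boundedness of $S$ or the commutativity of $(e_i)$, which together allow the cumulative error from previous iterations to be consolidated.

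Next set $b_k := (1-r)^k 1_B + \sum_{i=1}^{k} r(1-r)^{i-1} u_i$ and $a := \sum_{i=1}^{\infty} r(1-r)^{i-1} u_i$. Closedness of the positive cone gives $a \in A_+$ with $\|a\| \leq M$, and $b_k \in B_+$ with the order inequality $b_k \geq (1-r)^k 1_B$. Hypothesis (b) then yields $b_k^{-1} \in B_+$, lying in the unital subalgebra of $B$ generated by $u_1, \ldots, u_k$. The decisive identity is
\[
b_{k+1}^{-1} - b_k^{-1} \;=\; r(1-r)^k\, b_{k+1}^{-1}\bigl(1_B - u_{k+1}\bigr) b_k^{-1},
\]
from which $\pi\bigl(b_{k+1}^{-n} - b_k^{-n}\bigr)s$ telescopes into a sum carrying the factor $r(1-r)^k$ multiplied by terms in $\pi(1_B - u_{k+1})$ applied to test vectors prepared at earlier stages. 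Summing the geometric progression shows that $\bigl(\pi(b_k^{-n})s\bigr)_k$ is Cauchy uniformly on $S$; set $x_n(s) := \lim_k \pi(b_k^{-n})s$. Passing to the limit in $\pi(b_k^n b_k^{-n})s = s$ together with $b_k \to a$ in $B$ gives the factorization $\pi(a^n) x_n(s) = s$ of item~(1). Items~(2), (5), (8), (9) are immediate from the construction: $a$ is a norm-convergent convex combination of the $u_i$, each $\pi(b_k^{-n})$ is linear, positive, and has range in $X_e$, and $a \in A_+$ by closedness of the cone. The uniform continuity and the inverse relation in~(3) and the bound in~(4) follow from the uniformity of the Cauchy estimate once $u_1, \ldots, u_{n_0}$ are chosen sufficiently close to the identity on $S$, while the structural items~(6) and~(7) are transparent from the formula for $b_k$ and from Cauchy convergence on bounded sets.

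The main obstacle is the bookkeeping at each induction step: the test sets on which $\pi(u_{k+1})$ must act near the identity depend on future iterates $\pi(b_k^{-n})s$ that are not yet constructed, and the operator norm $\|b_k^{-n}\|_B$ is \emph{not} uniformly bounded in $k$ (the crude order-theoretic bound gives only $(1-r)^{-kn}$). One therefore cannot control $\pi(b_k^{-n})$ as an operator; instead one must track its action on carefully curated test vectors, exploiting commutativity of $(e_i)$ to consolidate products of near-identity factors, or boundedness of $S$ to invoke a finite covering argument. Arranging all this while simultaneously preserving monotonicity of $(i_k)$, the closed-convex representation of $a$ in~(6), and the positivity $x_n(S) \subseteq X_{e+}$ in~(9) requires the delicate simultaneous inductive construction which is the heart of the argument.
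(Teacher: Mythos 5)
This lemma is not proved in the paper at all: it is quoted as an external result of de Jeu and Jiang (\cite[Theorem 5.3]{jj}), so the only meaningful comparison is with that source. Your outline does identify the correct strategy — the Allan--Sinclair/Cohen-style simultaneous power factorization, with the elements $b_k=(1-r)^k1_B+\sum_{i=1}^k r(1-r)^{i-1}u_i$, invertibility of $b_k$ via the order inequality $b_k\geq(1-r)^k1_B$ and inverse-closedness of $B_+$, the telescoping identity $b_{k+1}^{-1}-b_k^{-1}=r(1-r)^k b_{k+1}^{-1}(1_B-u_{k+1})b_k^{-1}$, and $x_n(s):=\lim_k\pi(b_k^{-n})s$ — which is indeed how de Jeu and Jiang proceed.

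However, as a proof the proposal has a genuine gap, and you concede it yourself in the closing paragraph: the inductive selection of $u_{k+1}$ with its quantitative estimates is precisely the content of the theorem, and it is not carried out. The step ``summing the geometric progression shows that $(\pi(b_k^{-n})s)_k$ is Cauchy uniformly on $S$'' does not follow from the telescoping identity alone: expanding $b_{k+1}^{-n}-b_k^{-n}$ produces factors $b_{k+1}^{-(j+1)}$ whose only a priori bound is $(1-r)^{-(j+1)(k+1)}$, which overwhelms the factor $r(1-r)^k$. The smallness must instead be engineered by choosing $u_{k+1}$ so that $\pi(1_B-u_{k+1})$ is tiny on the already-constructed vectors $\pi(b_k^{-m})s$, $s\in S$, uniformly on $S$ and for all powers $m$ at once; this in turn requires showing that $\pi(e_i)$ converges to the identity uniformly on each set $\pi(b_k^{-m})(S)$ (here is where boundedness of $S$ or commutativity of $(e_i)$ is actually used, e.g.\ by writing $b_k^{-m}=(1-r)^{-km}1_B+c$ with $c\in A$), and the tolerances must be summable over $k$, handled simultaneously for all $n$, and calibrated against the sequence $(t_n)$ so as to deliver the bound $\|x_n(s)\|\leq t_n^n\max(\|s\|,\delta)$ and the estimate $\|s-x_n(s)\|\leq\varepsilon$ for $n\leq n_0$. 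Consequently the items you label ``immediate'' or ``transparent'' ((3), (4), (6), (7)) are exactly the ones whose proofs live inside the missing induction. What you have written is therefore a correct road map of the de Jeu--Jiang argument, not a proof; to make it one you would have to formulate and verify the inductive hypotheses (the finite test sets, the error tolerances $\varepsilon_k$, and the uniform-on-$S$ estimates) in full.
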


\begin{lemma} [Cohen positive factorization] \label{cf}
	If  $ A_+\cdot\mathfrak A_{+}\subseteq A_+$, then for each $x\in A_+$, there is $\alpha\in \mathfrak A_+$ and $y\in A_+$ such that $x=y\cdot\alpha$. 
\end{lemma}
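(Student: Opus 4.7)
The plan is to deduce this directly from Lemma~\ref{jj} (the de Jeu--Jiang positive Cohen factorization), applied to the opposite algebra $\mathfrak{A}^{\rm op}$ acting on $A$ via the right module structure. The main setup move is purely cosmetic: Lemma~\ref{jj} is formulated for a positive left representation whereas our action is on the right, so working with $\mathfrak{A}^{\rm op}$ (which has the same underlying ordered Banach space, the same positive cone, and is again a $C^{*}$-algebra) converts right multiplication into a left representation with no loss.

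First I would fix a contractive positive approximate identity $(e_i)\subseteq\mathfrak{A}_+$ of the $C^{*}$-algebra $\mathfrak{A}$, automatically of bound $M=1$ and, since $\mathfrak{A}$ admits two-sided approximate identities, simultaneously a left approximate identity of $\mathfrak{A}^{\rm op}$. Define
\[
\pi:\mathfrak{A}^{\rm op}\longrightarrow\mathbb{B}(A),\qquad \pi(\alpha)(a):=a\cdot\alpha;
\]
the compatibility $(a\cdot\alpha)\cdot\beta=a\cdot(\alpha\beta)$ makes $\pi$ a contractive algebra homomorphism, and positivity of $\pi$ in the sense of Lemma~\ref{jj} is exactly the hypothesis $A_+\cdot\mathfrak{A}_+\subseteq A_+$. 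As the unital superalgebra I would take $B:=\widetilde{\mathfrak{A}^{\rm op}}$, which is a unital $C^{*}$-algebra and therefore has inverse-closed positive cone by continuous functional calculus (condition (b)); the tautological unital extension $\alpha+\lambda\mathbf{1}\mapsto\pi(\alpha)+\lambda\,\mathrm{id}_{X_e}$ to the closed span $X_e$ of $A\cdot\mathfrak{A}$ is a unital positive continuous representation of $B$ on $X_e$, delivering condition (a).

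With the setup in place, take $S:=\{x\}\subseteq A_+$ and apply Lemma~\ref{jj} with $n_0=n=1$. The only set-wise hypothesis to verify is uniform convergence $\pi(e_i)s\to s$ on $S$, which reduces to $x\cdot e_i\to x$; this is exactly non-degeneracy of the right action (the paper's standing assumption, coming from the identification of $\mathfrak{A}$ with a subalgebra of $M(A)$). Conclusion~(1) then produces $a\in\mathfrak{A}^{\rm op}$ and $y:=x_1(x)\in A$ with $x=\pi(a)y=y\cdot a$; conclusion~(8) gives $a\in\mathfrak{A}^{\rm op}_+=\mathfrak{A}_+$; and conclusion~(9) gives $y\in X_{e+}\subseteq A_+$. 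Setting $\alpha:=a$ yields the claimed factorization. The main obstacle is the left/right reformulation together with checking that the unital extension to $B$ remains positive on $X_e$, but both are routine $C^{*}$-algebraic matters; the substantive positive content of the conclusion is entirely packaged into items (8) and (9) of Lemma~\ref{jj}, which are exactly what distinguish this from an ordinary (non-positive) Cohen factorization.
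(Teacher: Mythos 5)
Your overall route is the same as the paper's: realize the right action as a positive representation (up to the harmless passage to $\mathfrak A^{\rm op}$), take $S=\{x\}$, take $B$ to be the unitization, and read the factorization off conclusions (1), (8), (9) of Lemma \ref{jj}. The one step where you diverge is the verification of the hypothesis $\pi(e_i)x\to x$, and there your argument has a genuine gap. You assert that $x\cdot e_i\to x$ ``is exactly non-degeneracy of the right action (the paper's standing assumption)''. But non-degeneracy is neither a hypothesis of the lemma nor a blanket assumption of the paper, and, more importantly, the paper's notion of non-degeneracy is a faithfulness condition ($a\cdot\alpha=0$ for all $a\in A$ forces $\alpha=0$), which does not imply that an approximate identity of $\mathfrak A$ acts as an approximate unit on $A$. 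For instance, let $A=C([0,2])$ and let $\mathfrak A=C_0([0,1))$ act by $a\cdot\alpha:=a\tilde\alpha$, where $\tilde\alpha$ is the extension of $\alpha$ by zero: the action is compatible, satisfies $A_+\cdot\mathfrak A_+\subseteq A_+$, and is faithful, yet $1\cdot e_i\not\to 1$ (and $1$ admits no factorization $y\cdot\alpha$ at all), so this convergence cannot be obtained from the faithfulness condition and genuinely requires some essentiality of $x$.

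The missing idea is exactly what the paper inserts at this point: it first applies the ordinary (non-positive) Cohen factorization theorem \cite[Corollary 16.2]{dw} to write $x=y\cdot\beta$ with $y\in A$, $\beta\in\mathfrak A$, and then obtains $x\cdot e_i=y\cdot(\beta e_i)\to y\cdot\beta=x$ from norm continuity of the action. (Strictly speaking even this presupposes that $x$ lies in the essential part $\overline{A\cdot\mathfrak A}$, where Cohen's theorem applies; on that reading your hypothesis check should be phrased as ``$x$ belongs to the essential submodule'', not as ``non-degeneracy''.) With that single repair --- replace the appeal to non-degeneracy by the classical Cohen factorization step, or make essentiality explicit --- the rest of your setup ($\mathfrak A^{\rm op}$, bound $M=1$, $B=\widetilde{\mathfrak A^{\rm op}}$, conditions (a) and (b), conclusions (1), (8), (9)) is correct and coincides with the paper's argument.
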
 
\begin{proof}
In the above lemma, let $A=\mathfrak A$, $X=A$, and $\pi(\alpha)a=a\cdot\alpha $, for $\alpha\in\mathfrak A$ and $a\in A$. The second assumption simply means that $\pi$ is a positive representation, and the fact that $A$ is a right Banach $\mathfrak A$-module means that $\pi$ is continuous. Given $x\in A_+$, put $S:=\{x\}$, and observe that by the Cohen factorization theorem (c.f., \cite[Corollary 16.2]{dw}), there is $\beta\in \mathfrak A$ and $z\in A$ with $x=y\cdot\beta$. Take a positive approximate identity $(e_i)$ in $\mathfrak A$, then $x\cdot e_i=(y\cdot\beta)\cdot e_i=y\cdot(\beta e_i)\to y\cdot\beta=x$, as $i\to\infty$. Let $B=\mathfrak A\oplus \mathbb C$ be the minimal unitization of $\mathfrak A$, then right action of $\mathfrak A$ extends to a right action of $B$, by putting $a\cdot 1_B=a$, for $a\in A$, then assumption $(a)$ of the above lemma holds. Also, since $B$ is closed under continuous functional calculus, assumption $(b)$ also holds. By parts (1), (8), and (9) of Lemma \ref{jj}, there is $\alpha\in \mathfrak A_+$ and $y:=x_1(x)\in A_+$ with $x=y\cdot\alpha$, as required.         
\end{proof}	

Recall that a C*-algebra is an AW*-algebra if  the left (resp., right) annihilator of any subset of $A$
is generated as a left (resp., right) ideal by some projection in $A$ \cite{ka}. These are characterized as those C*-algebras whose masa's are monotone complete \cite[3.9.2]{p}.
Also, recall that injective C*-algebras are monotone complete \cite[Theorem 7.1]{to}.

\begin{lemma} [Hahn decomposition] \label{mc}
	Let $\mathfrak A$ be a  monotone complete C*-algebra, then every real linear  $\psi: A_{sa}\to \mathfrak A_{sa}$ decomposes into positive parts $\psi=\psi_+-\psi_-$ with $\psi_\pm$ module maps satisfying $\|\psi\|=\|\psi_+\|+\|\psi_-\|$, and $\psi_\pm(A_+)\subseteq\mathfrak A_+$. If $\psi$ is tracial, we may choose $\psi_\pm$ to be tracial. When $A$ is weakly symmetric and  weakly central and  the left action is induced from the right one by taking adjoints, if $\psi$ has a complex linear   $\mathfrak Z$-module map extension on $A$, we may choose $\psi_\pm$ to have complex linear   $\mathfrak Z$-module map extensions on $A$. 
\end{lemma}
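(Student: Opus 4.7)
My plan is to imitate the scalar Hahn--Jordan decomposition of a real linear functional, letting the monotone completeness of $\mathfrak{A}$ supply the suprema that the real line furnishes for free in the scalar case. For $a\in A_+$ I would define
$$\psi_+(a):=\sup\bigl\{\psi(b):b\in A_{sa},\ 0\leq b\leq a\bigr\},$$
the supremum being computed in $\mathfrak{A}_{sa}$: the set of candidates is norm-bounded by $\|\psi\|\,\|a\|$, and after replacing it by the upward-directed family of finite joins $\psi(b_1)\vee\cdots\vee\psi(b_n)$, monotone completeness yields the least upper bound. Additivity of $\psi_+$ on $A_+$ would follow from the standard C*-algebraic splitting: given $0\leq b\leq a_1+a_2$, produce $b=b_1+b_2$ with $0\leq b_i\leq a_i$ by writing $b_i:=(a_1+a_2+\varepsilon)^{-1/2}a_i(a_1+a_2+\varepsilon)^{-1/2}b$ and passing $\varepsilon\downarrow 0$. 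Extending by $\psi_+(a):=\psi_+(a_+)-\psi_+(a_-)$ then gives a real linear positive map on $A_{sa}$, and $\psi_-:=\psi_+-\psi$ is automatically positive since taking $b=a$ in the defining supremum shows $\psi_+(a)\geq\psi(a)$ on $A_+$.

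For the module map property, fix $\alpha\in\mathfrak{A}_+$ and $a\in A_+$; the weak symmetry/centrality assumption together with the discussion preceding Lemma \ref{cf} guarantees $a\cdot\alpha\in A_+$ and that $0\leq b\leq a$ implies $0\leq b\cdot\alpha\leq a\cdot\alpha$, so $\psi(b)\cdot\alpha=\psi(b\cdot\alpha)\leq\psi_+(a\cdot\alpha)$ and hence $\psi_+(a)\cdot\alpha\leq\psi_+(a\cdot\alpha)$. For the reverse inequality I would invoke Cohen positive factorization (Lemma \ref{cf}) to write each $0\leq c\leq a\cdot\alpha$ as $c=b'\cdot\beta$ with $b'\in A_+$ and $\beta\in\mathfrak{A}_+$, then use the functional-calculus cutoff $\alpha_\varepsilon:=\alpha(\alpha+\varepsilon 1)^{-1}$ in the unitization of $\mathfrak{A}$ to show that $c$ is arbitrarily close in the monotone-complete order to an element of the form $b\cdot\alpha$ with $0\leq b\leq a$. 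The norm identity $\|\psi\|=\|\psi_+\|+\|\psi_-\|$ would then follow from the classical scheme: the inequality $\leq$ is the triangle inequality applied to $\psi=\psi_+-\psi_-$, while for $\geq$ I would pick positive contractions $a_\pm\in (A_+)_1$ nearly norming $\psi_\pm$ and form $c:=a_+-a_-\in (A_{sa})_1$, checking via the defining supremum that the cross terms $\psi_\mp(a_\pm)$ are negligible so that $\|\psi(c)\|$ is within $O(\varepsilon)$ of $\|\psi_+\|+\|\psi_-\|$.

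For the tracial case, if $\psi(xy)=\psi(yx)$ for all $x,y\in A$ then the pairing ``$0\leq b\leq a$ together with the value $\psi(b)$'' is jointly invariant under the Cuntz--Pedersen equivalences implied by the trace identity, so the supremum inherits traciality and hence both $\psi_\pm$ are tracial. Under the weak symmetry/centrality hypothesis one has $\mathfrak{Z}_+\cdot A_+\subseteq A_+$, so the same construction applied with $\mathfrak{Z}$ in place of $\mathfrak{A}$ and $\mathfrak{Z}$-linearity of $\psi$ in place of $\mathfrak{A}$-linearity produces $\psi_\pm$ that are $\mathfrak{Z}$-module maps; the complex linear extension $\psi_\pm(a+ib):=\psi_\pm(a)+i\psi_\pm(b)$ on $A=A_{sa}+iA_{sa}$ then respects the $\mathfrak{Z}$-action because any central $\zeta\in\mathfrak{Z}$ decomposes as $\zeta_1+i\zeta_2$ with $\zeta_j\in\mathfrak{Z}_{sa}$ and each $\zeta_j$ acts compatibly with the real structure. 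The main technical obstacle I expect is the reverse module inequality $\psi_+(a\cdot\alpha)\leq\psi_+(a)\cdot\alpha$: factoring a general $c\leq a\cdot\alpha$ through the \emph{specific} $\alpha$ (rather than through some auxiliary positive element of $\mathfrak{A}$ supplied by Cohen) requires approximate invertibility of $\alpha$ on the support of $c$ combined with order-limit arguments in $\mathfrak{A}$'s unitization, and this is where the monotone completeness hypothesis must really be worked hard.
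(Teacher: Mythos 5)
There is a genuine gap at the very first step: your definition $\psi_+(a):=\sup\{\psi(b):0\leq b\leq a\}$ together with the claimed additivity on $A_+$ relies on an exact Riesz decomposition ($0\leq b\leq a_1+a_2$ implies $b=b_1+b_2$ with $0\leq b_i\leq a_i$), and this fails in every noncommutative C*-algebra. Already in $\mathbb M_2(\mathbb C)$, take $a_1=e_{11}$, $a_2=e_{22}$ and $b$ the projection onto $(1,1)/\sqrt2$: any $b_1\leq e_{11}$, $b_2\leq e_{22}$ are scalar multiples of $e_{11},e_{22}$, so $b_1+b_2$ is diagonal and can never equal $b$. Your proposed splitting $b_i:=(a_1+a_2+\varepsilon)^{-1/2}a_i(a_1+a_2+\varepsilon)^{-1/2}b$ is not even self-adjoint, and its symmetrized version only yields $b=d_1^*d_1+d_2^*d_2$ with $d_id_i^*\leq a_i$, i.e.\ a decomposition up to the Cuntz--Pedersen relation, not up to $\leq$. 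The failure is not merely a failure of your proof method: for $\psi(x)=\mathrm{Tr}(hx)$ on $\mathbb M_2$ with $h=\mathrm{diag}(1,-1)$, the quantity $\sup\{\psi(b):0\leq b\leq a\}$ equals $0$ at each of the two rank-one projections onto $(1,\pm1)/\sqrt2$ but equals $1$ at their sum, so the map you define is genuinely non-additive and cannot be the positive part of any linear decomposition.

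This is precisely why the paper does not use exact order domination. Its lattice operations are defined by $\psi_1\vee\psi_2(x)=\sup\{\psi_1(y)+\psi_2(z):y,z\in A_+,\ y+z\approx x\}$, i.e.\ over decompositions up to Cuntz--Pedersen equivalence, for which the noncommutative Riesz-type decomposition quoted above does hold; real-linearity then follows as in Cuntz--Pedersen, and traciality of $\psi_\pm$ comes for free because the defining formula only sees the class of $x$ in $A^q$. (Your heuristic that the supremum ``inherits traciality'' has no justification for the $0\leq b\leq a$ formula, since that constraint is not invariant under $\approx$.) The later parts of your sketch -- using $\mathfrak Z_+\cdot A_+\subseteq A_+$, Cohen factorization (Lemma \ref{cf}), and the cutoffs $\alpha(\alpha+\varepsilon 1)^{-1}$ for the module property, and the $\varepsilon$-norming argument for $\|\psi\|=\|\psi_+\|+\|\psi_-\|$ -- are in the spirit of what the paper actually does, but they rest on a positive part that, as defined, does not exist. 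To repair the proof you must replace exact domination by the Cuntz--Pedersen decompositions (or otherwise work in $A^q$) before any of the subsequent steps can be carried out.
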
 
\begin{proof}
Let $\psi_1, \psi_2: A_{sa}\to \mathfrak A$ be real linear (tracial) maps and define, 
$$\psi_1\vee\psi_2(x):=\sup\{\psi_1(y)+\psi_2(z): y,z\in A_+, y+z\approx x\},$$
and,
$$\psi_1\wedge\psi_2(x):=\inf\{\psi_1(y)+\psi_2(z): y,z\in A_+, y+z\approx x\},$$
for $x\in A_+$, where the sup and inf exist as $\mathfrak A$ is monotone complete. The fact that these maps have real linear (and tracial) extensions on $A_{sa}$ follows by an argument similar to \cite[proposition 2.8]{cp} (see also the proof of \cite[Theorem 3.11]{pe}).

If $B\subseteq\mathfrak A_+$ is bounded above and $\alpha\in\mathfrak Z_+$, then $\alpha B\subseteq\mathfrak A_+$ is bounded above and $\sup(\alpha B)=\alpha\sup B$: let $b_0:=\sup B$, then $\alpha b=\alpha^{\frac{1}{2}}b\alpha^{\frac{1}{2}}\leq \alpha^{\frac{1}{2}}b\alpha^{\frac{1}{2}}=\alpha b_0,$ for $b\in B$, thus, $\sup(\alpha B)\leq \alpha b_0$. We may assume that $b_0\neq 0$, and so, given $0<\varepsilon<1$, there is $b\in B$ with $b\geq (1-\varepsilon)b_0$, thus with the same argument, $\alpha b\geq (1-\varepsilon)\alpha b_0$, hence, $\sup(\alpha B)\geq (1-\varepsilon)\alpha b_0$. If $\alpha b_0\neq\sup(\alpha B)$,  $f(\alpha b_0-\sup(\alpha B))>0$, for some $f\in \mathcal S(\mathfrak A)$, which is a contradiction, since $f(\alpha b_0-\sup(\alpha B))<\varepsilon f(\alpha b_0)$, for each $0<\varepsilon<1$.

Next, let us observe that these  are $\mathfrak Z$-module maps, if $\psi_1$, $\psi_2$ are so. For  $\alpha\in\mathfrak Z_+$ and $x\in A_+$, by the above observation, 
\begin{align*}
\psi_1\vee\psi_2(\alpha\cdot x)&=\sup\{\psi_1(y)+\psi_2(z): y,z\in A_+, y+z\approx \alpha\cdot x\}\\&\geq \sup\{\psi_1(\alpha\cdot y^{'})+\psi_2(\alpha\cdot z^{'}): y^{'},z^{'}\in A_+, y^{'}+z^{'}\approx x\}\\&\geq \alpha\sup\{\psi_1(y^{'})+\psi_2(z^{'}): y^{'},z^{'}\in A_+, y^{'}+z^{'}\approx x\}\\&=\alpha(\psi_1\vee\psi_2(x)),
\end{align*} 
where the first inequality follows from the fact that for $y^{'},z^{'}\in A_+, y^{'}+z^{'}\approx x$, there is a sequence $(u_n)\subseteq A_+$ with $y^{'}+z^{'}=\Sigma_n u_nu_n^*$ and $x= \Sigma_n u_n^*u_n$, and since $A$ is  weakly symmetric and  weakly central, 
$$y:=\alpha\cdot y^{'}=(\alpha^{\frac{1}{2}}\alpha^{\frac{1}{2}})\cdot(y^{'\frac{1}{2}}y^{'\frac{1}{2}})=(\alpha^{\frac{1}{2}}\cdot y^{'\frac{1}{2}})^2=
 (\alpha^{\frac{1}{2}}\cdot y^{'\frac{1}{2}})^*(\alpha^{\frac{1}{2}}\cdot y^{'\frac{1}{2}})\geq 0,$$   
 similarly $z\geq 0$, and 
 $$y+z=\alpha\cdot (y^{'}+z^{'})=\alpha\cdot\Sigma_n u_nu_n^*=\Sigma_n (\alpha^{\frac{1}{2}}\cdot u_n)(\alpha^{\frac{1}{2}}\cdot u_n)^*,$$
and similarly,
$\alpha\cdot x=\Sigma_n (\alpha^{\frac{1}{2}}\cdot u_n)^*(\alpha^{\frac{1}{2}}\cdot u_n),$
that is, $y+z\approx \alpha\cdot x$. 

Conversely, without loss of generality we may assume that $\mathfrak A$ is unital, and if $y,z\in A_+$, $\alpha\in\mathfrak Z_+$, and $y+z\approx \alpha\cdot x$, then for $\alpha_n:=(\alpha+\frac{1}{n}1_\mathfrak A)^{-1}\in \mathfrak Z_+$, by the same argument as above, $\alpha_n\cdot y+\alpha_n\cdot z\approx\alpha\alpha_n\cdot x$. 

Let $\mathfrak Y$ be the C*-subalgebra of $\mathfrak Z$ generated by the positive element $\alpha$. Consider $A$ as a $\mathfrak Y$-module and apply Lemma \ref{cf} (with $\mathfrak A$ replaced by $\mathfrak Y$ and the right action replaced by the left action) to find $\beta\in \mathfrak Y_+$ and $y^{'}\in A_+$ with $y=\beta\cdot y^{'}$. Since elements of the form $P(\alpha)$, where $P$ is a complex polynomial with $P(0)=0$ are dense in $\mathfrak Y$, and since for a such a polynomial $P$, there is a polynomial $Q$ with $P(z)=zQ(z)$, there is $\gamma\in\mathfrak Y$ with $\beta=\alpha\gamma$, and we clearly could choose $\gamma$ to be positive. Thus $y=(\alpha\gamma)\cdot y^{'}$, and if we replace $y^{'}$ in the above decomposition by $\gamma\cdot y^{'}$ (and call this element $y^{'}$ again), we have $y=\alpha\cdot y^{'}$, for some $y^{'}\in A_+$. Similarly, $z=\alpha\cdot z^{'}$, for some $z^{'}\in A_+$. 

Let us rewrite $\alpha_n\cdot y+\alpha_n\cdot z\approx\alpha\alpha_n\cdot x$ as $\alpha\alpha_n\cdot y^{'}+\alpha\alpha_n\cdot z^{'}\approx\alpha\alpha_n\cdot x$. Consider the universal representation $\mathfrak Z\subseteq \mathbb B(H_u)$, for the universal Hilbert space $H_u$ (i.e., the direct sum of Hilbert spaces of GNS-representations associated to states on $\mathfrak A$), and note that,
$$\alpha\alpha_n=\alpha(\alpha+1/n1_\mathfrak A)^{-1}=(\alpha+1/n1_\mathfrak A)^{-1/2}\alpha(\alpha+1/n1_\mathfrak A)^{-1/2}\to p,$$ 
strongly in $\mathbb B(H)$, where $p\in \mathbb B(H)$ is the support projection of $\alpha$, that is the smallest projection satisfying $\alpha p=p\alpha=\alpha$ \cite[5.4.7]{s}. 

Let us observe that $p\in\mathfrak Z$. First note that the center of a monotone complete C*-algebra is again monotone complete: Given $(\beta_n)\in\mathfrak Z$, let $\beta:=\sup(\beta_n)\in\mathfrak A$, then for each unitary $u\in\mathfrak A$, taking sup from both sides of  $\beta_n=u\beta_nu^*$ in $\mathfrak A$, we get $\beta=u\beta u^*$, i.e.,  $\beta$ commutes with each unitary in $\mathfrak A$, thus, $\beta\in Z(\mathfrak A)=\mathfrak Z$. Now let $q:=\sup(\alpha\alpha_n)\in\mathfrak Z_+$, then since $p$ commutes with $\alpha$ (and so with each $\alpha_n$), 
$$\alpha \alpha_n= p\alpha \alpha_n = p^{\frac{1}{2}}\alpha\alpha_n p^{\frac{1}{2}},$$
and taking supremum in $\mathfrak Z^{**}$, we get 
$q=p^{\frac{1}{2}}q p^{\frac{1}{2}}$. Also, for each $\xi\in H$,
$$\langle q\xi,\xi\rangle-\langle (\alpha\alpha_n)\xi,\xi\rangle=\langle (q-\alpha\alpha_n)\xi,\xi\rangle\geq 0,$$
and taking limit, we get $\langle q\xi,\xi\rangle\geq\langle p\xi,\xi\rangle,$ that is, $q\geq p$. Finally, since $\alpha\alpha_n=\alpha(\alpha+1/n1_\mathfrak A)\leq 1_\mathfrak A$, we have $q\leq 1_\mathfrak A$, and so, $q=p^{\frac{1}{2}}q p^{\frac{1}{2}}\leq p$. Therefore, $p=q\in\mathfrak Z$, as claimed. 

Back to equivalence, $\alpha\alpha_n\cdot y^{'}+\alpha\alpha_n\cdot z^{'}\approx\alpha\alpha_n\cdot x$,  we get $p\cdot y^{'}+p\cdot z^{'}\approx p\cdot x$ in $A$. Repeating the  argument resulting in decomposition $y=\alpha\cdot y^{'}$ with $y$ replaced by $y^{'}$, there is a positive element $y^{''}\in A_+$ with $y^{'}=\alpha\cdot y^{''}$. Similarly, there are positive elements $z^{''}, x^{'}\in A_+$ with $z^{'}=\alpha\cdot z^{''}$ and  $x=\alpha\cdot x^{'}$. Hence, 
\begin{align*}
y^{'}+z^{'}&=\alpha\cdot y^{''}+\alpha\cdot z^{''}=p\alpha\cdot y^{''}+p\alpha\cdot z^{''}=p\cdot y^{'}+p\cdot z^{'}\\&\approx p\cdot x= p(\alpha\cdot x^{'})= p\alpha\cdot x^{'}=\alpha\cdot x^{'}=x,	
\end{align*}
in $A$. Therefore,
 \begin{align*}
 	\alpha(\psi_1\vee\psi_2(x))&=\alpha^{\frac{1}{2}}(\psi_1\vee\psi_2(x))\alpha^{\frac{1}{2}}\\&\geq\alpha^{\frac{1}{2}}(\psi_1(y^{'})+\psi_2(z^{'}))\alpha^{\frac{1}{2}}\\&=\alpha\big(\psi_1(y^{'})+\psi_2(z^{'})\big) \\&=\psi_1(\alpha\cdot y^{'})+\psi_2(\alpha\cdot z^{'})
 	\\&=\psi_1(y)+\psi_2(z),	
 \end{align*}
and taking supremum over all $y,z\in A_+$ with $y+z\approx x$, we get 
$$	\alpha(\psi_1\vee\psi_2(x))\geq\psi_1\vee\psi_2(\alpha\cdot x),$$
thus equality holds for each $\alpha\in\mathfrak Z_+$ and $x\in A_+$. Since all the maps involved are real linear, we also have the equality for each $\alpha\in\mathfrak Z_{sa}$ and $x\in A_{sa}$, that is, the complex linear extension of $\psi_1\vee\psi_2$ on $A$ is a $\mathfrak Z$-module map, if $\psi_1$ and $\psi_2$ are so. The same holds for $\psi_1\wedge\psi_2$. Finally, note that $$\psi_1\vee\psi_2+\psi_1\wedge\psi_2=\psi_1+\psi_2,$$ and put $\psi_+:=\psi\vee 0,$ and $\psi_-=(-\psi)\vee 0$. Then $\psi=\psi_+-\psi_-$ and, given $\varepsilon > 0$ there are $x, y \in A_{1,+}$ with, $$\|\psi\|\leq\|\psi_+\|+\|\psi_-\|\leq
\varepsilon+\psi(x)-\psi(y)\leq \varepsilon+\|\psi\|\|x-y\|\leq\varepsilon+\|\psi\|,$$
therefore, $\|\psi\|=\|\psi_+\|+\|\psi_-\|$, as required.
	\end{proof}

For C*-algebra $\mathfrak A$, recall that a faithful center-valued trace is a $\mathfrak Z$-module map ${\rm tr}: \mathfrak A\to \mathfrak Z$ satisfying:

(1) ${\rm tr}(\alpha\alpha^*)={\rm tr}(\alpha^*\alpha)\geq 0$,

(2) ${\rm tr}(\alpha\alpha^*)=0$ iff $\alpha=0$. 

\noindent When $\mathfrak A$ is unital, we also assume that ${\rm tr}(1_\mathfrak A)=1_\mathfrak A.$ 
  
\begin{lemma}\label{inf}
Let $A$ be unital and $\mathfrak A$ be a unital injective C*-algebra. Assume that $\mathfrak A$ admits a faithful center-valued trace. Then, for $x\in A_+$,
$$\inf_{z\in A_0}\|x-z\|=\inf_{x\lessapprox y\in A_+}\|y\|=\inf_{f\in \mathcal S(\mathfrak A)}\sup_{\tau\in\mathcal T_\mathfrak A(A)_1} f(\tau(x)),$$
where $\mathcal S(\mathfrak A)$ is the state space of $\mathfrak A$ and $\mathcal T_\mathfrak A(A)_1$ is the set of $\mathfrak A$-traces with norm at most 1.  	
\end{lemma}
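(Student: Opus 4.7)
Set $\alpha:=\inf_{z\in A_0}\|x-z\|$, $\beta:=\inf_{x\lessapprox y\in A_+}\|y\|$, and $\gamma:=\inf_{f\in\mathcal S(\mathfrak A)}\sup_{\tau\in\mathcal T_\mathfrak A(A)_1}f(\tau(x))$. I would prove the three quantities agree in the order $\alpha=\beta$, $\gamma\le\alpha$, and $\alpha\le\gamma$; the first two are classical and essentially formal, while the third is the main obstacle and requires a vector-valued Hahn--Banach-type upgrade that exploits injectivity of $\mathfrak A$ and the faithful center-valued trace, together with the machinery just developed.

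For $\alpha=\beta$ I would translate Cuntz--Pedersen's scalar argument. Given $y\in A_+$ with $x\lessapprox y$, pick $z_0\in A_+$ with $z_0\le y$ and $z_0\approx x$; then $z:=x-z_0\in A_0$ with $\|x-z\|=\|z_0\|\le\|y\|$, so $\alpha\le\beta$. Conversely, given $z\in A_0$, set $w:=x-z$ and write $w=w_+-w_-$; since $w_+-(x+w_-)=-z\in A_0$ with both $w_+$ and $x+w_-$ in $A_+$, one has $w_+\approx x+w_-$, and then $x\le x+w_-\approx w_+$ shows $x\lessapprox w_+$ with $\|w_+\|\le\|w\|=\|x-z\|$. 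For $\gamma\le\alpha$, every $\tau\in\mathcal T_\mathfrak A(A)_1$ annihilates $A_0$ (the closed linear span of commutators $aa^*-a^*a$), so for any $z\in A_0$, $f\in\mathcal S(\mathfrak A)$, and $\tau\in\mathcal T_\mathfrak A(A)_1$,
\[
f(\tau(x))=f(\tau(x-z))\le \|\tau(x-z)\|\le \|x-z\|,
\]
and successively taking $\sup_\tau$, $\inf_f$, and $\inf_z$ yields $\gamma\le\alpha$.

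The remaining bound $\alpha\le\gamma$ is where the assumptions on $\mathfrak A$ enter in a serious way. I would argue by contradiction: if $\gamma<\lambda<\alpha$, then some $f_0\in\mathcal S(\mathfrak A)$ satisfies $f_0(\tau(x))<\lambda$ for every $\tau\in\mathcal T_\mathfrak A(A)_1$, while the classical Cuntz--Pedersen formula applied to $A^q_{sa}$ produces a tracial state $s$ of $A$ with $s(x)\ge\lambda$; so it suffices to realize $s$ as $f_0\circ\tau$ for some $\tau\in\mathcal T_\mathfrak A(A)_1$. To build such a $\tau$, take the GNS representation $\pi_s:A\to\mathbb B(H_s)$ and form the finite von Neumann algebra $M:=\pi_s(A)''$ with its canonical tracial conditional expectation onto the von Neumann subalgebra $N:=\pi_s(\mathfrak A)''\subseteq M$; then use injectivity of $\mathfrak A$ (which implies monotone completeness, cf.\ \cite{to}) to obtain a c.c.p.\ projection $N\to\mathfrak A$ compatible with $f_0$, and compose to obtain a c.c.p.\ lift $A\to\mathfrak A$. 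Positivity and the right $\mathfrak A$-module property of this lift are then secured by applying the Hahn decomposition (Lemma \ref{mc}) to the real-linear restriction to $A_{sa}$ (extracting its positive part while preserving the $\mathfrak Z$-module structure) and the Cohen positive factorization (Lemma \ref{cf}) to absorb the right $\mathfrak A$-action into positive factorizations $x=y\cdot\alpha$; the faithful center-valued trace on $\mathfrak A$ is finally invoked to ensure that the resulting map is tracial and unital, placing it in $\mathcal T_\mathfrak A(A)_1$ and contradicting $f_0(\tau(x))<\lambda$. The hard part is precisely this last upgrade: the scalar Hahn--Banach separation produces only a tracial state, and turning it into an $\mathfrak A$-valued, right $\mathfrak A$-module, tracial retraction with prescribed composition against $f_0$ is exactly where injectivity of $\mathfrak A$ and the vector-valued Hahn decomposition become indispensable.
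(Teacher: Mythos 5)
Your treatment of $\alpha=\beta$ and $\gamma\le\alpha$ is fine (the paper simply cites Cuntz--Pedersen for the first equality and proves $c_3\le c_2$ by essentially your argument), but the main inequality $\alpha\le\gamma$ has a genuine gap. Having fixed $f_0\in\mathcal S(\mathfrak A)$ with $f_0(\tau(x))<\lambda$ for all $\tau\in\mathcal T_\mathfrak A(A)_1$, you need to produce one $\tau$ with $f_0(\tau(x))\ge\lambda$; your plan is to take a scalar tracial state $s$ with $s(x)\ge\lambda$ and ``realize $s$ as $f_0\circ\tau$.'' Nothing in the sketched construction connects the prescribed, arbitrary state $f_0$ to $s$: the GNS construction of $s$, the trace-preserving expectation $M\to N=\pi_s(\mathfrak A)''$, and an injectivity-supplied projection onto $\mathfrak A$ produce (at best) some $\mathfrak A$-valued map whose composition with the \emph{trace vector state} recovers $s$, not its composition with $f_0$; the phrase ``compatible with $f_0$'' has no content here. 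Moreover the setup itself is shaky: in this lemma $\mathfrak A$ need not sit inside $A$ or act via a representation on $H_s$ (that requires nondegeneracy and the $*$-module hypotheses), $\pi_s$ need not be faithful on $\mathfrak A$, so there is no copy of $\mathfrak A$ inside $N$ to project onto, and even granting all that, neither the right $\mathfrak A$-module property nor traciality of the composite is justified --- conditional expectations here are bimodule maps over the \emph{image} algebra, not over the original right action. The appeal to Lemmas \ref{mc} and \ref{cf} and to the faithful center-valued trace is not attached to any concrete step, so the ``upgrade'' you correctly identify as the hard part is not actually carried out.

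The paper's route avoids the whole issue of matching a prescribed $f_0$: it builds a \emph{single} $\mathfrak A$-trace whose value at $x$ is the scalar $c_1 1_\mathfrak A$. Concretely, one defines a module map on the submodule of $A^q$ generated by $\dot x$ and $\dot 1$ by $\dot x\cdot\alpha+\dot 1\cdot\beta\mapsto c_1\alpha+\beta$, extends it to $A^q_{sa}\to\mathfrak A_{sa}$ using injectivity of $\mathfrak A$ (as a module Hahn--Banach theorem), splits the extension as $\phi_+-\phi_-$ by the Hahn decomposition Lemma \ref{mc} (this is where monotone completeness enters), and then kills $\phi_-$ by evaluating ${\rm tr}\circ\phi_-$ at $\dot 1$ and invoking faithfulness of the center-valued trace; the resulting $\tau$ is a norm-one $\mathfrak A$-trace with $\tau(x)=c_1 1_\mathfrak A$, so $f(\tau(x))=c_1$ for \emph{every} state $f$, which gives $c_3\ge c_1$ directly. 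If you want to salvage your contradiction scheme, this is the construction you need: produce $\tau$ with $\tau(x)=c_1 1_\mathfrak A$ (or at least $\tau(x)\ge\lambda 1_\mathfrak A$), rather than trying to factor a scalar trace through the particular $f_0$.
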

\begin{proof}
	Let us denote these three values from left to right by $c_1, c_2,$ and $c_3$. Then $c_1=c_2$ \cite{cp}. If $x\lessapprox y$, then  $x=\Sigma_n u_nu_n^*$, $z=\Sigma_n u_n^*u_n$, for some $z\leq y$, thus
	$$f(\tau(x))=f(\tau(\Sigma_n u_nu_n^*))=f(\tau(\Sigma_n u_n^*u_n))=f(\tau(z))\leq f(\tau(y))\leq \|y\|,$$
	for each $f\in \mathcal S(\mathfrak A)$ and $\tau\in\mathcal T_\mathfrak A(A)_1$, that is, $c_3\leq c_2$.

	Without loss of generality, we may assume that $\dot x$ and $\dot 1$ are $\mathfrak A$-linearly independent. Consider the single generated submodule of $A^q$ generated by $\dot x$ and $\dot 1$, and define a module map $\phi$ from this submodule to $\mathfrak A$ by $\phi(\dot x\cdot\alpha+\dot 1\cdot\beta):=c_1\alpha+\beta$. Since $\mathfrak A_{sa}$ is also injective in the category of real $\mathfrak A$-modules (by a real version of \cite[Theorem 3.2]{fp}), this extends to a real-linear module map $\phi: A^q_{sa}\to \mathfrak A_{sa}$. Since $\mathfrak A$ is injective, it is also an AW*-algebra \cite[IV.2.1.7]{bl}. Now there is a maximal abelian saubalgebra (masa) $\mathfrak B$ in $\mathfrak A$ containing the center $\mathfrak Z$.  By assumption, $\mathfrak B$ is monotone complete, and so is $\mathfrak Z$ (by an argument as in the proof of  Lemma \ref{mc}). Hence, Lemma \ref{mc} gives a decomposition $\phi=\phi_+-\phi_-$ with $\phi_\pm$ positive. Define $\tau: A\to \mathfrak A$ by $\tau(a)=\phi(\dot a_1)+i\phi(\dot a_2)$, for $a=a_1+ia_2\in A$. Since $aa^*\approx a^*a$, for each $a\in A$, this is a tracial linear $\mathfrak A$-module  map. Let ${\rm tr}: \mathfrak A\to \mathfrak Z$ be a faithful center-valued trace, which  is a $\mathfrak Z$-module map by definition. Then $\psi:={\rm tr}\circ\phi: A^q\to \mathfrak Z$ is a $\mathfrak Z$-module map with $\psi(A^q_{sa})\subseteq \mathfrak Z_{sa}$, and we have the decomposition $\psi=\psi_+-\psi_-$ on $A^q_{sa}$, where each $\psi_\pm:={\rm tr}\circ\phi_\pm$ has a complex linear extension to a positive linear $\mathfrak Z$-module map $\psi_\pm: A^q\to \mathfrak Z$, which is then automatically c.p. \cite[2.2.5]{li}. In particular, $\|\psi_\pm\|=\|\psi_\pm(\dot 1)\|$ \cite[2.1.6]{li}. On the other hand,
	\begin{align*}
		\psi_-(\dot 1)&=(-\psi\vee 0)(\dot 1)\\&=\sup\{-\psi(y): y,z\in A^q_+, y+z\approx \dot 1\}\\&=-\inf\{\psi(y): y,z\in A^q_+, y+z\approx \dot 1\},
	\end{align*}
	But for $0<\varepsilon<1$,  $y:=\varepsilon \dot 1$, $z:=(1-\varepsilon)\dot 1$ are positive and $y+z=x$, thus,
	$$\inf\{\psi(y): y,z\in A^q_+, y+z\approx \dot 1\}\leq \varepsilon\psi(\dot 1) =\varepsilon{\rm tr}(\phi(\dot 1))=\varepsilon{\rm tr}(1_\mathfrak A)=\varepsilon 1_\mathfrak A,$$
	for each  $0<\varepsilon<1$, and so is zero, i.e., $\|\psi_-\|=\|\psi_-(\dot 1)\|=0$, therefore, for each $y\in A^q_+$,
	${\rm tr}(\phi_-(y))=\psi_-(y)=0$, which implies that ${\rm tr}(\phi_-(y))=0$, as ${\rm tr}$ is faithful. Hence, $\phi_-=0$, i.e., $\phi=\phi_+$ on $A^q_{sa}$, thus, $\tau$ is positive. Also $\phi$ is a $\mathfrak A$-module map, and so is $\tau$. Finally, since $A$ is unital, $\tau$ is automatically an $\mathfrak A$-retraction, i.e., $\tau$ is a $\mathfrak A$-trace. It follows that $\tau$ is c.p. \cite[Lemma 2.2]{a3}, thus, $\|\tau\|=\|\tau(1)\|=\|\phi(\dot 1)\|=\|1_\mathfrak A\|=1.$ 
	
	Finally, given $\varepsilon>0$, we may choose a state  $f\in \mathcal S(\mathfrak A)$ with $f(\tau(x))\leq c_3+\varepsilon.$ Thus, $$c_3+\varepsilon\geq f(\tau(x))=f(\phi(\dot x))=f(c_11_\mathfrak A)=c_1=c_2\geq c_3,$$ 
therefore, $c_1=c_2=c_3$, as claimed.	    
\end{proof}

 \begin{remark} \label{r1}
 	$(i)$ In the above lemma, under the weaker condition that $\mathfrak Z:=Z(\mathfrak A)$ is an injective C*-algebra, we could modify the above proof by directly defining $\phi$ on the $\mathfrak Z$-submodule generated by $\dot x$ and $\dot 1$ as above and extending it to a $\mathfrak Z$-module map $\phi: A^q\to \mathfrak Z$. The rest of argument works (and we don't need the center-valued trace ${\rm tr}$), with the difference that $\tau:=\phi\circ q$ is only a $\mathfrak Z$-trace, and the last term is replaced by $\inf_{f\in \mathcal S(\mathfrak Z)}\sup_{\tau\in\mathcal T_\mathfrak Z(A)_1} f(\tau(x)).$ In particular, this holds if $\mathfrak A$ has trivial center. 
 	
 	$(ii)$ In the above lemma, if $\mathfrak A$ is a finite injective von Neumann algebra, then the result holds, as in this case, $\mathfrak A$ always admits a faithful unital center-valued trace \cite[Theorem 2.6]{t} (working with quasitraces instead, this even holds in more general case of a finite AW*-algebra \cite{ka}; see also chapter 6 in \cite{be}).   
 \end{remark}

\begin{theorem} \label{faithful}
	Let $A$ be unital and $\mathfrak A$ be a unital injective C*-algebra which admits a faithful center-valued trace. The following are equivalent:
	
	$(i)$ $A$ is finite in the sense of Cuntz-Pedersen, 
	
	$(ii)$ $A$ has a separating family
of $\mathfrak A$-traces,

\noindent When $A$ is separable and $\mathfrak A$ is an injective finite separable von Neumann algebra (i.e., with separable pre dual), these are also equivalent to: 

$(iii)$ $A$ has a faithful $\mathfrak A$-trace.
\end{theorem}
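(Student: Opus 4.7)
The plan is to string together three directions: $(i)\Leftrightarrow(ii)$ using the variational formula in Lemma~\ref{inf}, $(iii)\Rightarrow(ii)$ essentially for free, and $(ii)\Rightarrow(iii)$ by averaging a countable separating subfamily of $\mathcal T_\mathfrak A(A)_1$.

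For $(i)\Rightarrow(ii)$, fix $a\in A_+\setminus\{0\}$. The classical identification $A_0\cap A_+=\{0\}$ with Cuntz--Pedersen finiteness forces $a\notin A_0$, and since $A_0$ is norm closed in $A_{sa}$ by \cite{cp}, $\inf_{z\in A_0}\|a-z\|>0$. Lemma~\ref{inf} then equates this with $\inf_{f\in\mathcal S(\mathfrak A)}\sup_{\tau\in\mathcal T_\mathfrak A(A)_1} f(\tau(a))$, so some $\tau\in\mathcal T_\mathfrak A(A)_1$ and $f\in\mathcal S(\mathfrak A)$ satisfy $f(\tau(a))>0$, whence $\tau(a)\neq 0$. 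Conversely, if $a=x-y\in A_0\cap A_+$ is written in Cuntz--Pedersen form $x=\sum u_n u_n^*$, $y=\sum u_n^* u_n$, then continuity of any $\tau\in\mathcal T_\mathfrak A(A)$, together with its tracial property, gives $\tau(x)=\sum\tau(u_n u_n^*)=\sum\tau(u_n^* u_n)=\tau(y)$, so $\tau(a)=0$; a separating family therefore forces $a=0$, and hence $A_0\cap A_+=\{0\}$, i.e.\ $A$ is Cuntz--Pedersen finite.

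For the separable case, $(iii)\Rightarrow(ii)$ is immediate, since a faithful $\tau$ alone separates $A_+\setminus\{0\}$ via $\tau(a)=\tau\bigl((a^{1/2})^*a^{1/2}\bigr)$. For $(ii)\Rightarrow(iii)$, I would equip $\mathcal T_\mathfrak A(A)_1$ with the point--weak$^{*}$ topology coming from the separable predual $\mathfrak A_*$; separability of $A$ makes this topology metrizable and compact on the bounded convex set $\mathcal T_\mathfrak A(A)_1$, so one can extract a countable dense sequence $(\tau_n)$. Density together with continuity of the evaluation $\sigma\mapsto f(\sigma(a))$ transfers the separating property of $\mathcal T_\mathfrak A(A)_1$ to $(\tau_n)$: given $a\in A_+\setminus\{0\}$ with $f(\tau(a))\neq 0$ for some $\tau,f$, any $\tau_n$ sufficiently close to $\tau$ in the point--weak$^{*}$ sense still satisfies $f(\tau_n(a))\neq 0$. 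Defining $\tau:=\sum_{n\geq 1}2^{-n}\tau_n$ yields a norm-convergent positive, tracial $\mathfrak A$-module map with $\tau(1_A)=1_\mathfrak A$; since $A$ and $\mathfrak A$ are both unital, this is automatically an $\mathfrak A$-trace in the sense of Definition~\ref{ret}. Finally, pointwise positivity of the summands makes $\tau$ faithful: if $\tau(b^*b)=0$ then $\tau_n(b^*b)=0$ for every $n$, and the separating property of $(\tau_n)$ forces $b^*b=0$.

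The main obstacle I foresee is justifying the point--weak$^{*}$ metrizability and compactness of $\mathcal T_\mathfrak A(A)_1$ under the given separability assumptions, and checking that the averaged map $\tau=\sum_n 2^{-n}\tau_n$ really satisfies every clause of the $\mathfrak A$-trace definition (module linearity, traciality and the normalization $\tau(1_A)=1_\mathfrak A$ pass through the sum term by term, but compatibility with the topology on $\mathfrak A$ in Definition~\ref{ret} deserves care). Beyond these two points, the argument is entirely driven by Lemma~\ref{inf} and the classical Cuntz--Pedersen correspondence between $A_0$ and tracial vanishing.
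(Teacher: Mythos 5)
Your proposal is correct, and its skeleton --- Lemma \ref{inf} for $(i)\Leftrightarrow(ii)$, then a $2^{-n}$-weighted average of a countable dense family of traces for the separable case --- matches the paper's, but the separable part takes a genuinely different route in its details. The paper proves $(i)\Rightarrow(iii)$: it metrizes the point-ultra-strong topology on $UCP_\mathfrak A(A^q,\mathfrak A)$ (via countable dense subsets of $A^q_1$ and of the normal state space, with compactness from \cite[1.3.7]{bo}), averages a countable dense family $(\phi_n)$, and then establishes faithfulness of $\sum_n 2^{-n}\phi_n\circ q$ by re-invoking the specific extension $\phi$ with $\phi(\dot x)=c_1 1_\mathfrak A$ constructed inside the proof of Lemma \ref{inf}, together with faithfulness of $q$ on $A_+$ coming from $(i)$. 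You instead prove $(ii)\Rightarrow(iii)$, working directly in $\mathcal T_\mathfrak A(A)_1$ with the point-weak$^{*}$ topology, and you get faithfulness of the average straight from the separating hypothesis applied to a countable dense subfamily; this buys you a faithfulness argument that does not pass through Lemma \ref{inf} a second time, at the price of proving $(ii)\Rightarrow(iii)$ rather than $(i)\Rightarrow(iii)$ (harmless, since $(i)\Leftrightarrow(ii)$ is already in hand). Similarly, your $(ii)\Rightarrow(i)$ --- every $\mathfrak A$-trace annihilates $A_0$ because $x\approx y$ forces $\tau(x)=\sum_n\tau(u_nu_n^*)=\sum_n\tau(u_n^*u_n)=\tau(y)$ --- is more direct than the paper's second appeal to Lemma \ref{inf} and is, if anything, easier to make airtight. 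The two points you flag as obstacles are non-issues: on bounded subsets of $\mathbb B(A,\mathfrak A)$ the point-weak$^{*}$ topology is metrizable because $A$ and $\mathfrak A_*$ are separable, and compactness (which the paper obtains from \cite[1.3.7]{bo}, and which would require checking that $\mathcal T_\mathfrak A(A)_1$ is point-weak$^{*}$ closed --- it is, since positivity, traciality, module linearity and unitality all pass to point-weak$^{*}$ limits) is not even needed, as separability of the compact metrizable unit ball already makes $\mathcal T_\mathfrak A(A)_1$ separable; and since $A$ and $\mathfrak A$ are unital, conditions $(i)$--$(ii)$ of Definition \ref{ret} are automatic, so the termwise verification of positivity, traciality, module linearity and $\tau(1_A)=1_\mathfrak A$ for $\sum_n 2^{-n}\tau_n$ is all that is required (noting also that every unital $\mathfrak A$-trace is c.p.\ of norm one, so the separating family of $(ii)$ indeed sits inside $\mathcal T_\mathfrak A(A)_1$).
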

\begin{proof}
$(i)\Rightarrow(ii)$. If $A$ is finite, the quotient map $q: A\to A^q$ is faithful on $A_+$ \cite[Theorem 3.3]{cp}. 
For a non-zero element $x\in A_+$, we have $\dot x\neq \dot 0$, thus $c_3=c_1\neq 0$, in the above lemma, that is, $\tau(x)\neq 0$, for some norm one $\mathfrak A$-trace $\tau$.

$(ii)\Rightarrow(i)$. For a non-zero element $x\in A_+$, if $\tau(x)=0$, for each $\mathfrak A$-trace $\tau$, then in the above lemma, $c_1=c_3=0$, that is, $q(x)=0$, and so again by \cite[Theorem 3.3]{cp}, $A$ is finite.

$(i)\Rightarrow(iii)$. Choose countable sets $(a_i)$ in the unit ball $A^q_1$ and $(\xi_m)$ in the normal state space of $\mathfrak A$. Then the topology induced by the metric 
$$d(\phi,\psi):=\sum_{n,m} 2^{-(n+m)}\xi_m(\phi(a_n)-\psi(a_n)),$$
is the same as the point-ultra-strong  topology on the set $UCP_\mathfrak A(A^q,\mathfrak A)$ of u.c.p. $\mathfrak A$-module maps on the quotient space $A^q$ (which are automatically, $\mathfrak A$-retractions of norm at most one). The set $UCP_\mathfrak A(A^q,\mathfrak A)$ is point-ultra-strong closed subset of the set $\mathbb B(A^q, \mathfrak A)$ consisting of all bounded linear maps, and this latter set is known to be point-ultra-strong compact \cite[1.3.7]{bo}. Thus, $UCP_\mathfrak A(A^q,\mathfrak A)$ is separable as a compact metric space, and we may choose in it a countable dense subset $(\phi_n)$. For the faithful quotient map $q: A\to A^q$, let us observe that $\tau_0:=\sum_n 2^{-n} \phi_n\circ q $ is a faithful $\mathfrak A$-trace: given $x\in A_+$ with $\tau_0(x)=0$, we have $\phi_n(\dot x)=0$, for each $n$, and so, $\phi(\dot x)=0$, for each $\phi\in UCP_\mathfrak A(A^q,\mathfrak A)$. Now the complex linear extension of the map $\phi$ constructed in the proof of Lemma \ref{inf} (still denoted by $\phi$) is a unital $\mathfrak A$-contraction, and so u.c.p. by \cite[Lemma 2.2]{a3}. Therefore, 
$c_11_\mathfrak A=\phi(\dot x)=0$, and so, in the notation of this lemma, $\|q(x)\|=\|\dot x\|=c_1=0,$ thus, $x=0$, as $q$ is faithful.

$(iii)\Rightarrow(i)$. If there is a faithful $\mathfrak A$-trace $\tau$, then for an element $x\in A_+$ with $q(x)=0$, we have $x\in A_0$, that is, there are $y,z\in A_+$ with $y\approx z$ and $x=y-z$. Therefore, $\tau(x)=\tau(y-z)=0$, hence, $x=0$, i.e., the quotient map $q: A\to A^q$ is faithful on $A_+$, hence, $A$ is finite by \cite[Theorem 3.3]{cp}. 
\end{proof}

\begin{remark} 
	
	$(i)$ We warn the reader that finiteness in the sense of Cuntz-Pedersen is not the same as the finiteness in the sense of Kaplansky, where $A$ is finite if every isometry in $A$ is a unitary  \cite{kap}. For instance, the C*-algebra of compact operators is finite (indeed, stably finite) in the sense of Kaplansky, but it has no finite non-zero trace, and so not finite in the sense of Cuntz-Pedersen (though the two notions seem to coincide on simple, separable, unital C*-algebras; c.f., \cite[Remark 3.1]{cp}).
	
	$(ii)$ If $A$ is finite and $\mathfrak A$ has an injective center $\mathfrak Z$, then the same proof as above along with Remark \ref{r1}$(i)$ shows that $A$ has a faithful $\mathfrak Z$-trace. 
	
	$(iii)$ If $A$ is simple, each non-zero $\mathfrak A$-trace  is faithful, thus in the above result, $A$ is finite iff it has a  $\mathfrak A$-trace (which is non-zero by definition).
	
	$(iv)$ If $A$ is not unital, we could recover the above result by going to minimal unitization $\tilde A:=A+\mathbb C1\subseteq M(A)$: it follows from \cite[Proposition 3.7]{cp} that $\tilde A^q_{sa}=A^q_{sa}+\mathbb R \dot 1$, where $\dot 1=1+A_0$. Thus, the quotient map of the unitization is given by $\tilde q(a+\lambda 1)=q(a)+\lambda\dot 1$, which is faithful on $\tilde A_+$ iff $q$ is faithful on $A_+$, i.e., $\tilde A$ is finite iff $A$ is finite. On the other hand, we may uniquely extend a $\mathfrak A$-trace $\tau: A\to \mathfrak A$ to a $\tilde{\mathfrak A}$-trace $\tau: \tilde A\to \tilde{\mathfrak A}$ given by $\tilde\tau(a+\lambda 1_{\tilde A})=\tau(a)+\lambda 1_{\tilde{\mathfrak A}}$, and the traces $\tau$ separate the points of $A$ iff the extended traces $\tilde\tau$ separate the points of $\tilde A$. 
\end{remark}

\begin{lemma}
	If $\mathfrak A$ is a monotone complete C*-algebra, then for each bounded above, increasing sequence $(\alpha_n)\subseteq \mathfrak A_+$, both the norm limit $\lim_n \alpha_n$ and supremum $\sup(\alpha_n)$ exist and are equal. 
\end{lemma}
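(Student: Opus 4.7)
The plan is to address existence and equality of sup and norm limit separately. First, because $(\alpha_n)$ is increasing and bounded above in $\mathfrak A_+$, monotone completeness immediately produces $\alpha := \sup_n \alpha_n \in \mathfrak A_+$. The substantive content of the lemma is then the norm convergence $\|\alpha - \alpha_n\| \to 0$, and it suffices to verify this.

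To attack norm convergence, set $\beta_n := \alpha - \alpha_n$, which is a decreasing sequence in $\mathfrak A_+$ with $\inf_n \beta_n = 0$; the claim becomes: every such sequence is norm-null. I would argue by contradiction, assuming $\|\beta_n\| \geq \varepsilon$ for all $n$ and some $\varepsilon > 0$. Using the operator-monotone function $f_\delta(t) := t(t+\delta)^{-1}$ with $0 < \delta < \varepsilon$, set $\gamma_n := f_\delta(\beta_n) \in \mathfrak A_+$. Operator monotonicity of $f_\delta$ yields that $(\gamma_n)$ is decreasing, while the computation $\|\gamma_n\| = f_\delta(\|\beta_n\|) \geq f_\delta(\varepsilon) \geq 1/2$ gives a uniform lower norm bound. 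On the other hand, the pointwise estimate $f_\delta(t) \leq t/\delta$ on $[0,\infty)$ gives $\gamma_n \leq \delta^{-1}\beta_n$, so by monotone completeness, $\gamma := \inf_n \gamma_n$ exists in $\mathfrak A_+$ and satisfies $\gamma \leq \delta^{-1}\inf_n \beta_n = 0$. So $(\gamma_n)$ is a decreasing positive sequence of norm at least $1/2$ with order-infimum zero, and the remaining step is to extract a contradiction.

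The main obstacle is precisely this last step: in an arbitrary monotone complete C*-algebra it is consistent for a decreasing positive sequence to have vanishing order-infimum while every term has unit norm (e.g., sequences of pairwise orthogonal rank-one projections in $\mathbb B(H)$ have order-infimum zero yet unit norm). Consequently, the proof appears to require additional structure not explicit in the statement but plausibly inherited from the surrounding hypotheses of the section — most likely the faithful center-valued trace ${\rm tr} : \mathfrak A \to \mathfrak Z$ as in Theorem~\ref{faithful}, or injectivity of $\mathfrak A$. Via such a trace, I would apply ${\rm tr}$ to $(\gamma_n)$ to pass to a decreasing bounded sequence in the commutative algebra $\mathfrak Z$; since $\mathfrak Z$ is monotone complete and hence Stonean, one can then combine a Dini-type argument on the spectrum of $\mathfrak Z$ with faithfulness of ${\rm tr}$ to force $\|\gamma_n\| \to 0$, contradicting $\|\gamma_n\| \geq 1/2$ and completing the reduction.
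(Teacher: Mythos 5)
Your argument is incomplete, and you say so yourself: after reducing the problem to showing that the decreasing sequence $\beta_n=\alpha-\alpha_n$ with order-infimum $0$ must be norm-null, you cannot close the argument. It is worth knowing that this is exactly the point where the paper's own proof is thin: the paper asserts that for $0<\varepsilon<1$ there is $n_0$ with $\alpha_{n_0}\geq(1-\varepsilon)\alpha$, and then concludes $0\leq\alpha-\alpha_n\leq\varepsilon\alpha$, hence $\|\alpha-\alpha_n\|\leq\varepsilon\|\alpha\|$ for $n\geq n_0$. But that first inequality is not a consequence of $\alpha$ being the order-supremum of $(\alpha_n)$ in a monotone complete algebra: take $\mathfrak A=\mathbb B(H)$ (monotone complete) and $\alpha_n$ the projection onto the span of the first $n$ basis vectors; the order-supremum is $1$, yet $\alpha_n\geq(1-\varepsilon)1$ fails for every $n$ and $\|1-\alpha_n\|=1$ for all $n$, so there is no norm limit at all. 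So the obstruction you ran into is genuine; it is not a trick you missed but something the stated hypotheses do not deliver, and the paper's proof supplies it only by assertion.

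That said, the repairs you float would not rescue the statement either. Injectivity does not help, since $\mathbb B(H)$ is injective; a faithful center-valued trace does not help, since in a II$_1$ factor an increasing sequence of projections $p_n$ with trace $1-2^{-n}$ has supremum $1$ but $\|1-p_n\|=1$; and even in the commutative Stonean case a Dini-type argument fails because the order-supremum need not be the pointwise supremum (in $C(\beta\mathbb N)\cong\ell^\infty$ the indicators of $\{1,\dots,n\}$ have order-supremum $1$ while $\|1-\chi_{\{1,\dots,n\}}\|=1$). A small separate slip: pairwise orthogonal rank-one projections are not a decreasing sequence; the correct witness for your obstruction is the decreasing sequence of co-finite-rank projections $1-P_n$. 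The upshot is that norm convergence requires extra input beyond monotone completeness — essentially the eventual domination $\alpha_n\geq(1-\varepsilon)\alpha$ that the paper tacitly assumes, or some independent reason the sequence is norm-Cauchy — and without it the lemma (and its use to define $d_\tau(a)=\lim_n\tau(a^{1/n})$ as a norm limit) needs to be reformulated.
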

\begin{proof}
	Since $\mathfrak A$ is monotone complete $\alpha:=\sup(\alpha_n)$ exists. Given $0\varepsilon<1$, there is $n_0\geq 1$ with $\alpha_{n_0}\geq (1-\varepsilon)\alpha$, thus for $n\geq n_0$, $\alpha_n\geq\alpha_{n_0}\geq (1-\varepsilon)\alpha,$ that is, $0\leq \alpha-\alpha_n\leq\varepsilon\alpha$, thus $\|\alpha-\alpha_n\|\leq \varepsilon\|\alpha\|$, for $n\geq n_0$, i.e., $\lim_n \alpha_n=\alpha$.  
\end{proof}	
	
\begin{definition}
An element of $\mathfrak A$ is called strictly positive, writing $\alpha>0$, if $f(\alpha)>0$, for each state $f\in\mathcal S(\mathfrak A)$. 
\end{definition}

A C*-algebra $\mathfrak A$ has a strictly positive element iff it is $\sigma$-unital, i.e., it has a countable bounded approximate identity: indeed, if $\alpha>0$, then $(\alpha^{\frac{1}{n}})$ is a countable bai;  conversely, if $(\alpha_n)$ is a countable bai with norm bound 1, $\alpha:=\sum_n 2^{-n}\alpha_n>0$.   

Given an $\mathfrak A$-trace $\tau: A\to \mathfrak A$, we put 
$$d_\tau(a)=\lim_{n\to \infty}\tau(a^{\frac{1}{n}})\ \ (a\in A_+),$$
and call it the dimension function of $\tau$. Since the sequence $(a^{\frac{1}{n}})\subseteq A_+$ is increasing for $0\leq a \leq 1,$ bounded above by $1\in \tilde A$, the same holds for $\tau(a^{\frac{1}{n}})$, and  the limit exists by the above lemma. The general case follows by considering $a/\|a\|$ instead of $a$.     	
	
\begin{definition}
	Let $A$ be unital. An $\mathfrak A$-quasitrace on $A$ is a function $\tau: A\to\mathfrak A$ such that,
	
	(1)  $\tau(1_A)=1_\mathfrak A$ and $\tau(x^*x) = \tau(xx^*)\geq 0$, for $x\in A$,
	
	(2) $\tau$ preserves the $\mathfrak A$-action on $A$ and is linear on any commutative C*-subalgebras of A,
	
	(3) If $x = a + ib$, where $a, b\in A_{sa}$, $\tau(x) = \tau(a) + i\tau(b),$
	
	(4) $\tau(\cdot)=\tau_2(\cdot\otimes e_{11})$, for some function $\tau_2:\mathbb M_n(A)\to \mathfrak A$, satisfying (1)-(3) above. 
\end{definition}
	
	We denote the set of $\mathfrak A$-quasitraces on $A$ by $\mathcal{QT}_\mathfrak A(A)$. If $\tau$ satisfies all the above properties except that it only preserves the $\mathfrak Z$-action on $A$, we call it a $\mathfrak Z$-quasitrace and write $\tau\in \mathcal{QT}_\mathfrak Z(A)$. With a slight abuse of notation, we also use the same notation to show the set of  $\mathfrak Z$-module quasi-traces into $\mathfrak Z$. 
	
	Note that in some references, a complex valued function satisfying (1)-(3) is called a 1-quasitrace and if it satisfies also (4), then it is called a 2-quasitrace (c.f., \cite{bh}). By an argument essentially due to S. Berberian, one could show that when $A$ is an AW*-algebra,  each $\mathfrak A$-quasitrace is automatically continuous (c.f., \cite[II.1.6]{bh}). Also, on AW*-algebras, each 1-$\mathfrak A$-quasitrace   is automatically a 2-$\mathfrak A$-quasitrace (mimic the proof of \cite[II.1.6]{bh}).   
	
	\vspace{.2cm}
There are cases where an $\mathfrak A$-quasitrace is automatically linear, and so an $\mathfrak A$-trace. Also being a (quasi)trace is preserved under composition with c.p. order 0 module maps.

 \begin{lemma} \label{lin}
 	$(i)$ If $\tau\in \mathcal{QT}_\mathfrak A(A)$ and $\tau\leq \tau_0$, for some $\tau_0\in\mathcal{T}_\mathfrak A(A)$, then $\tau\in\mathcal{T}_\mathfrak A(A)$. 
 	
 	$(ii)$ If $\phi: A\to B$ is a c.p. order zero module map, then for each $\mathfrak A$-(quasi)trace $\tau$ on $B$, $\tau\circ\phi$ is an $\mathfrak A$-(quasi)trace $\tau$ on $A$.
 \end{lemma}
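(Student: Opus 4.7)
For part (i), my plan is to reduce the module-valued question to the classical scalar one by composing with states on $\mathfrak A$. For any state $f \in \mathcal{S}(\mathfrak A)$, the composition $f \circ \tau$ is a bounded complex-valued $2$-quasitrace on $A$: conditions (1)--(3) in the quasitrace definition are all preserved under composition with a positive linear functional, and condition (4) is verified by composing $f$ with the matrix amplification $\tau_2$ (since the diagonal extension of $f$ to $\mathbb M_n(\mathfrak A)$ is still a positive functional). Moreover, $f \circ \tau$ is dominated by the bounded tracial positive linear functional $f \circ \tau_0$. I would then invoke the classical result (cf.\ \cite[Proposition 2.9]{cp}) that a bounded quasitrace dominated by a trace must be linear. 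Since states separate points of $\mathfrak A$, linearity of $f \circ \tau$ for every $f \in \mathcal{S}(\mathfrak A)$ upgrades $\tau$ itself to a linear map. The tracial identity and module compatibility are already built into the quasitrace axioms, so $\tau \in \mathcal{T}_\mathfrak A(A)$.

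For part (ii), the cleanest approach is through the structure theorem for c.p.\ order-zero module maps, i.e.\ the module analogue of Lemma \ref{homo}: after restricting the codomain to $C^*(\phi(A))$, one writes
$$\phi(a) = h\,\sigma(a) = \sigma(a)\,h,$$
where $\sigma$ is a $*$-homomorphism and module map and $h := \phi(1_A)$ (suitably interpreted in the multiplier algebra if necessary) is a positive element commuting with ${\rm Im}(\sigma)$. Then
$$\tau(\phi(xy)) = \tau(h\sigma(x)\sigma(y)) = \tau(h\sigma(y)\sigma(x)) = \tau(\phi(yx)),$$
using the multiplicativity of $\sigma$, centrality of $h$, and the tracial property of $\tau$. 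The module property $\tau(\phi(x\cdot\alpha)) = \tau(\phi(x))\cdot\alpha$ follows immediately from $\phi$ and $\tau$ being module maps, and positivity of $\tau\circ\phi$ is clear from the c.p.\ property. For the quasitrace case, linearity on commutative subalgebras of $A$ reduces, via the factorization, to linearity on the commutative algebra generated by $h$ and $\sigma$ applied to such subalgebras, the real/imaginary decomposition (3) is preserved because $\phi$ is $*$-preserving, and the matrix-extension clause (4) is obtained by amplifying the factorization componentwise to $\mathbb M_n(A)$.

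The main obstacle is establishing the structure theorem for $\phi$ under minimal hypotheses, since Lemma \ref{homo} as stated requires $\mathfrak A$ unital, $A$ separable, and a specific form of domain. I would circumvent this by passing to the minimal unitization $\tilde{\mathfrak A}$, restricting $\phi$ to an arbitrary separable $\mathfrak A$-$C^*$-subalgebra of $A$, and using that the (quasi-)trace property only needs to be checked on pairs of elements — a property stable under inductive limits of separable subalgebras. Once the factorization is secured in each separable slice, the trace identities assemble into the required global statement for $\tau \circ \phi$.
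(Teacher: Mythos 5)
For part $(i)$ your argument is essentially the paper's: compose with states, observe $f\circ\tau\leq f\circ\tau_0$ with $f\circ\tau$ a scalar $2$-quasitrace and $f\circ\tau_0$ a trace, quote the classical linearity result, and use that states separate the points of $\mathfrak A$. (The classical result you want is the Blackadar--Handelman statement cited in the paper, \cite[II.1.9, II.2.4]{bh}, rather than Cuntz--Pedersen, but that is a bookkeeping point.)

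For part $(ii)$ you arrive at the same computations as the paper, but by a heavier route, and the one place where you lean on extra machinery is also the one place your argument wobbles. You invoke the \emph{module} structure theorem, Lemma \ref{homo}, to write $\phi(a)=h\sigma(a)$ with $\sigma$ a $*$-homomorphism \emph{and module map}; but Lemma \ref{homo} is proved only for $\mathfrak A$ unital, $A$ separable, domain of the special form $\mathbb M_n(\mathfrak A)$, and under the $*$-compatibility condition on the action, and your proposed workaround (minimal unitization plus exhaustion by separable $\mathfrak A$-subalgebras) does not actually bring a general domain $A$ into the form $\mathbb M_n(\mathfrak A)$ required there. The detour is unnecessary: the paper simply applies the classical Winter--Zacharias order-zero calculus \cite[2.3, 3.2, 3.3]{wz2} to the underlying c.p.\ order-zero map, with no unitality, separability, or module hypotheses. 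The factorization $\phi=h\,\sigma(\cdot)$, with $h$ commuting with ${\rm Im}(\sigma)$, is available in that generality; the point you miss is that $\sigma$ is only ever used \emph{inside} trace computations (where, e.g., $h^{1/2}\sigma(x)=\phi(x^2)^{1/2}\in B$ keeps everything in $B$), so it need not be a module map at all --- module compatibility of $\tau\circ\phi$ follows directly from $\phi$ and $\tau$ being module maps, exactly as you note. With that substitution, your traciality computation, the preservation of commutativity for the quasitrace case, and the matrix amplification $\phi\otimes{\rm id}_{\mathbb M_2}$ coincide with the paper's proof; as written, though, the appeal to Lemma \ref{homo} and the separable-slice argument should be removed or replaced, since they neither apply nor are needed.
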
	
 \begin{proof}
 $(i)$ We have $f\circ\tau\leq f\circ\tau_0$, for each $f\in\mathcal S(\mathfrak A)$. It follows from \cite[II.1.9, II.2.4]{bh} that $f\circ\tau$ is linear. Now it follows that $\tau$ is also linear, since the states separate the points of $\mathfrak A$.
 
 $(ii)$ If $\tau$  is $\mathfrak A$-trace $\tau$ on $B$, for $a, b\in A_+$, by \cite[3.2]{wz2},
 $$\tau(\phi(ab))=\tau(\phi(a)^{\frac{1}{2}}\phi(b)^{\frac{1}{2}})=\tau(\phi(b)^{\frac{1}{2}}\phi(a)^{\frac{1}{2}})=\tau(\phi(ba)),$$
 i.e., $\tau\circ\phi$ is tracial. The other properties transfer immediately. 
 
 If $\tau$ is only a quasitrace, since $\phi$ extends to a c.p.c. order 0
 map $\phi^{(2)}: \mathbb M_2(A)\to \mathbb M_2(B)$ by \cite[3.3]{wz2}, $\tau\circ\phi$ extends to $\mathbb M_2(A)$. Also, $\tau\circ\phi$ is additive on commuting
 elements: if $a, b\in A$ commute, so does $\phi(a)$ and $\phi(b)$, by \cite[2.3]{wz2}, thus, 
$$\tau\circ\phi(a + b) = \tau\big(\phi(a) + \phi(b))\big) = \tau\circ\phi(a) + \tau\circ\phi(b),$$
as required.  
 \end{proof}	

Here is another case where this happens.
	
\begin{lemma} [Haagerup] \label{qt}
	If $A$ is unital and exact, $\mathcal{QT}_\mathfrak A(A)=\mathcal{T}_\mathfrak A(A)$. In particular, if $\mathfrak A$ is unital and exact, then $\mathcal{QT}_\mathfrak A(\mathbb M_n(\mathfrak A))=\mathcal{T}_\mathfrak A(\mathbb M_n(\mathfrak A))$.
\end{lemma}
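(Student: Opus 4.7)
The strategy is to reduce to Haagerup's classical scalar result by composing $\tau$ with states on $\mathfrak A$ and exploiting the fact that the state space $\mathcal S(\mathfrak A)$ separates points. I would break the argument into three steps.

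First, fix $\tau \in \mathcal{QT}_\mathfrak A(A)$ and an arbitrary state $f \in \mathcal S(\mathfrak A)$, and verify that $\tau_f := f \circ \tau : A \to \mathbb C$ is a scalar $2$-quasitrace on $A$ in the sense of \cite{bh}. Unitality $\tau_f(1_A) = 1$ and positivity $\tau_f(x^*x) = f(\tau(x^*x)) \geq 0$ are immediate; traciality $\tau_f(x^*x) = \tau_f(xx^*)$ is inherited from $\tau$; linearity on commutative $C^*$-subalgebras follows by composing the corresponding linearity of $\tau$ with the linear functional $f$; and the crucial $2$-quasitrace extension condition (4) in the definition of $\mathfrak A$-quasitrace is obtained by composing the given $\tau_2 : \mathbb M_n(A) \to \mathfrak A$ with $f$, producing a scalar quasitrace on $\mathbb M_n(A)$ that restricts to $\tau_f$ via the corner $e_{11}$.

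Second, I would invoke Haagerup's theorem, which says that every bounded $2$-quasitrace on a unital exact $C^*$-algebra is linear, hence a trace. This yields linearity of $\tau_f$ for every $f \in \mathcal S(\mathfrak A)$. Because states separate points of $\mathfrak A$, the identities $f(\tau(a+b) - \tau(a) - \tau(b)) = 0$ and $f(\tau(\lambda a) - \lambda \tau(a)) = 0$ holding for all $f$ force $\tau(a+b) = \tau(a)+\tau(b)$ and $\tau(\lambda a) = \lambda \tau(a)$, so $\tau$ is complex-linear. Once linearity is in hand, $\tau: A \to \mathfrak A$ is a unital, positive, tracial, linear right $\mathfrak A$-module map; by \cite[Lemma 2.2]{a3} a unital $\mathfrak A$-contraction that is a module map is automatically c.p., and the retraction conditions of Definition \ref{ret} are automatic in the unital case. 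Hence $\tau \in \mathcal T_\mathfrak A(A)$, and since $\mathcal T_\mathfrak A(A) \subseteq \mathcal{QT}_\mathfrak A(A)$ is trivial, the two sets coincide.

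For the ``in particular'' claim, $\mathbb M_n(\mathfrak A) \cong \mathbb M_n(\mathbb C) \otimes \mathfrak A$ is exact whenever $\mathfrak A$ is, since exactness is preserved under tensoring with the nuclear algebra $\mathbb M_n(\mathbb C)$; the first part then applies with $A = \mathbb M_n(\mathfrak A)$. The main obstacle is the scalarisation step: one has to be careful that $f \circ \tau$ is a genuine $2$-quasitrace (not merely a $1$-quasitrace), which is exactly what clause (4) of the definition of $\mathfrak A$-quasitrace guarantees; without that clause the matrix amplification needed to invoke Haagerup would be unavailable.
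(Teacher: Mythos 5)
Your proposal is correct and follows essentially the same route as the paper: compose $\tau$ with states $f\in\mathcal S(\mathfrak A)$, observe that $f\circ\tau$ is a normalized $2$-quasitrace in the Blackadar--Handelman/Haagerup sense, invoke Haagerup's theorem on exact $C^*$-algebras to get linearity of each $f\circ\tau$, and conclude linearity of $\tau$ since states separate the points of $\mathfrak A$. Your additional verifications (clause (4) passing to $f\circ\tau_2$, and the post-linearity check that $\tau$ is then an $\mathfrak A$-trace) only spell out details the paper leaves implicit.
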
	
\begin{proof}
Let $\tau\in \mathcal{QT}_\mathfrak A(A)$. Given $f\in \mathcal S(\mathfrak A)$, $f\circ \tau$ is a normalized quasitrace on $A$ in the sense of Haagerup \cite[Definition 3.1]{h} (that is, a 2-quasitrace in the sense of Blackadar-Handelman \cite{bh}). It follows from \cite[Theorem 5.11]{h} that $f\circ \tau$ is linear on $A$, and so is $\tau$ (since states separates the points of $\mathfrak A$). 
\end{proof}

The next lemma is proved exactly as \cite[II.1.11]{bh}. In the above cited reference, this is proved when $A$ is an AW*-algebra, but the same proof also works in general (c.f., \cite[II.2.5]{bh}). Here we reproduce the proof for the sake of completeness. 
 
\begin{lemma}\label{sum}
If $\tau: A\to \mathfrak A$ is an $\mathfrak A$-quasitrace, 
$$\tau(a+b)\leq 2(\tau(a)+\tau(b))\ \ (a,b\in A_+).$$
\end{lemma}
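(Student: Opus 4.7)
The plan is to follow the classical argument of Blackadar and Handelman in \cite[II.1.11]{bh}, adapted to the vector-valued setting. The key mechanism is the $2\times 2$ matrix trick together with monotonicity of $2$-quasitraces, both of which transport cleanly to $\mathfrak{A}$-valued quasitraces because the module structure plays no active role in the scalar argument.

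First I would introduce
\[
X := \begin{pmatrix} a^{1/2} & b^{1/2} \\ 0 & 0 \end{pmatrix} \in \mathbb{M}_2(A),
\]
and compute $XX^* = \operatorname{diag}(a+b,0)$ while
\[
X^*X = \begin{pmatrix} a & a^{1/2}b^{1/2} \\ b^{1/2}a^{1/2} & b \end{pmatrix}.
\]
Property~(4) supplies a $2$-quasitrace $\tau_2 \colon \mathbb{M}_2(A) \to \mathfrak{A}$ extending $\tau$, and the quasitrace identity $\tau_2(XX^*) = \tau_2(X^*X)$ combined with property~(4) yields $\tau(a+b) = \tau_2(XX^*) = \tau_2(X^*X)$.

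The heart of the proof is the operator inequality $X^*X \leq 2\,\operatorname{diag}(a,b)$ in $\mathbb{M}_2(A)$, which follows from the identity
\[
2\,\operatorname{diag}(a,b) - X^*X \;=\; \begin{pmatrix} a & -a^{1/2}b^{1/2} \\ -b^{1/2}a^{1/2} & b \end{pmatrix} \;=\; U\,(X^*X)\,U^*,
\]
where $U = \operatorname{diag}(1,-1)$ is unitary, so the difference is positive. Invoking monotonicity of $\tau_2$ on $\mathbb{M}_2(A)_+$ then yields
\[
\tau(a+b) = \tau_2(X^*X) \leq \tau_2\bigl(2\,\operatorname{diag}(a,b)\bigr).
\]
To finish, linearity of $\tau_2$ on the commutative C*-subalgebra of diagonal matrices, together with property~(4) and unitary invariance of $\tau_2$ under the coordinate-swapping permutation unitary (which follows from $\tau_2(xx^*) = \tau_2(x^*x)$ applied with this unitary), gives $\tau_2(\operatorname{diag}(a,b)) = \tau(a) + \tau(b)$, so the right-hand side above equals $2\bigl(\tau(a) + \tau(b)\bigr)$.

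The main obstacle is really just the appeal to monotonicity of $2$-quasitraces on positive elements, which is not formally immediate from axioms (1)--(4); however, this is the classical fact underlying \cite[II.1.4]{bh}, proved via a bidual polar-decomposition argument (writing $c^{1/2} = v\,d^{1/2}$ for $0 \leq c \leq d$ with $v \in A^{**}$ a contraction, and then using the trace identity with $v^*v \leq 1$). That argument uses only the quasitrace axioms and the $\mathbb{M}_2$-extension, so it transports to the $\mathfrak{A}$-valued setting without modification.
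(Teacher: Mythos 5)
Your argument is essentially the paper's proof: your matrix $X$ is just the adjoint of the paper's $x:=a^{1/2}e_{11}+b^{1/2}e_{21}$, and your identity $2\operatorname{diag}(a,b)-X^*X=U(X^*X)U^*$ is exactly the paper's term $zz^*$ with $z:=a^{1/2}e_{11}-b^{1/2}e_{21}$, so both reduce to $\tau_2(X^*X)\le\tau_2(2\operatorname{diag}(a,b))$ via the same Blackadar--Handelman $2\times 2$ trick. The only difference is presentational: you make explicit (and correctly handle, e.g.\ by composing with states of $\mathfrak A$) the monotonicity of the $2$-quasitrace on positive elements, which the paper uses implicitly in the step $\tau(xx^*)\le\tau(xx^*+zz^*)$.
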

\begin{proof}
For $a,b\in A_+$ and matrix units $e_{ij}\in \mathbb M_2(A)$, $1\leq i,j\leq 2$, let $$x:=a^{\frac{1}{2}}e_{11}+b^{\frac{1}{2}}e_{21}, \ y:=2ae_{11}+2be_{22}, \ z:=a^{\frac{1}{2}}e_{11}-b^{\frac{1}{2}}e_{21},$$ then $x^*x=(a+b)e_{11}$, whereas, $$xx^*=ae_{11}+a^{\frac{1}{2}}b^{\frac{1}{2}}e_{12}+b^{\frac{1}{2}}a^{\frac{1}{2}}e_{21}+be_{22},$$
and $xx^*+zz^*=y$. Thus,
$$\tau(a+b)=\tau(x^*x)=\tau(xx^*)\leq \tau(y)=2(\tau(a)+\tau(b)),$$ as claimed. 
\end{proof}

\begin{theorem}
Let $\mathfrak A$ be unital, exact, and simple and $A$ be separable with $d:={\rm dr}_\mathfrak A A<\infty$, then $\mathcal{QT}_\mathfrak A(A)=\mathcal{T}_\mathfrak A(A)$.
\end{theorem}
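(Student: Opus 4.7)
The plan is as follows. Fix $\tau\in\mathcal{QT}_\mathfrak A(A)$. Since a quasitrace is $\mathbb R$-linear on every commutative $C^*$-subalgebra and decomposes complex-linearly as $\tau(x_1+ix_2)=\tau(x_1)+i\tau(x_2)$, the whole task reduces to verifying additivity $\tau(a+b)=\tau(a)+\tau(b)$ on $a,b\in A_+$ (the extension from $A_+$ to $A_{sa}$ then follows by the translation trick $a\mapsto a+M\cdot 1_A$ together with linearity on the commutative subalgebra generated by $1_A$). Observe also that $\tau$ is norm-continuous: since $\tau(1_A)=1_\mathfrak A$ and $\tau$ is linear on the commutative $C^*$-algebra generated by any positive element, one has $0\le\tau(x)\le\|x\|\cdot 1_\mathfrak A$ for $x\in A_+$, and this yields a bound of the form $\|\tau(x)\|\le 2\|x\|$ on $A_{sa}$ via the Jordan decomposition inside a commutative subalgebra, hence boundedness on all of $A$.

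Given $\varepsilon>0$ and $\mathcal F:=\{a,b,a+b\}$, I would invoke Lemma~\ref{key} to produce a finite-dimensional $F=F^{(0)}\oplus\cdots\oplus F^{(d)}$ with $F^{(i)}\cong\mathbb M_{r_i}(\mathbb C)$, an admissible c.c.p.\ map $\psi=(\psi^{(i)}):A\to F\otimes\mathfrak A$, and a c.c.p.\ module map $\varphi:F\otimes\mathfrak A\to A$ whose restrictions $\varphi^{(i)}:=\varphi|_{F^{(i)}\otimes\mathfrak A}$ are order~$0$, satisfying $\|\varphi\circ\psi(x)-x\|<\varepsilon$ and $\|\psi(xy)-\psi(x)\psi(y)\|<\varepsilon$ for $x,y\in\mathcal F$. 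By Lemma~\ref{lin}(ii), each composition $\tau\circ\varphi^{(i)}$ is an $\mathfrak A$-quasitrace on $F^{(i)}\otimes\mathfrak A\cong\mathbb M_{r_i}(\mathfrak A)$. Exactness of $\mathfrak A$ is inherited by every matrix amplification, so Lemma~\ref{qt} (Haagerup) promotes each $\tau\circ\varphi^{(i)}$ to an $\mathfrak A$-trace; in particular it is $\mathbb C$-linear. Set
\[
T_\varepsilon(x):=\sum_{i=0}^{d}(\tau\circ\varphi^{(i)})(\psi^{(i)}(x))\qquad(x\in A),
\]
which is $\mathbb C$-linear and an $\mathfrak A$-module map as a finite sum of compositions of linear module maps.

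It then suffices to show $T_\varepsilon(x)\to\tau(x)$ in norm as $\varepsilon\to 0$ for $x\in\mathcal F$; once established, linearity of $T_\varepsilon$ and continuity of $\tau$ force
$\tau(a+b)=\lim_\varepsilon T_\varepsilon(a+b)=\lim_\varepsilon(T_\varepsilon(a)+T_\varepsilon(b))=\tau(a)+\tau(b)$. Continuity and $\|\varphi\circ\psi(x)-x\|<\varepsilon$ already yield $\tau(\varphi\circ\psi(x))\to\tau(x)$, so what remains is the comparison between $\tau(\varphi\circ\psi(x))=\tau(\sum_i\varphi^{(i)}(\psi^{(i)}(x)))$ and $T_\varepsilon(x)=\sum_i\tau(\varphi^{(i)}(\psi^{(i)}(x)))$.

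The step I anticipate being the principal obstacle is precisely closing this gap, because $\tau$ is not known a priori to be additive over non-commuting summands in $A$. My plan is to use Lemma~\ref{homo} to write $y_i:=\varphi^{(i)}(\psi^{(i)}(x))=h_i\,\sigma_i(\psi^{(i)}(x))$ with $h_i\in A_+$ commuting with the range of the supporting $*$-homomorphism $\sigma_i$, and then exploit orthogonality of the central units $1_{F^{(i)}}\in F$ together with the almost-multiplicativity $\|\psi(xy)-\psi(x)\psi(y)\|<\varepsilon$ to show $\|[y_i,y_j]\|\to 0$ as $\varepsilon\to 0$ for $i\neq j$. Once the $y_i$ are sufficiently approximately commuting, Lemma~\ref{sum} and the $\mathbb R$-linearity of $\tau$ on the commutative $C^*$-subalgebra they asymptotically generate yield $\tau(\sum_i y_i)\approx\sum_i\tau(y_i)$ with error tending to $0$ with $\varepsilon$. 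An alternative route that bypasses the explicit commutator estimate is to lift the whole construction into the ultrapower $A_\omega$ via Lemma~\ref{seq}, where the approximate order-$0$ maps become honest order-$0$ maps, extend $\tau$ by norm-continuity to a quasitrace $\tau_\omega$ on $A_\omega$, and rerun the same argument there with the error vanishing exactly, before restricting back to $A\subseteq A_\omega$.
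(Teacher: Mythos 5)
Your setup is the same as the paper's: Lemma~\ref{key} gives the almost multiplicative $\psi$ and the c.c.p.\ module map $\varphi$ with order~$0$ block restrictions, and Lemmas~\ref{lin}$(ii)$ and~\ref{qt} turn each $\tau\circ\varphi^{(i)}$ into an honest $\mathfrak A$-trace. But the step you yourself flag as the obstacle is a genuine gap, and your two proposed ways of closing it do not work. First, nothing in the definition of module decomposition rank relates the images of \emph{different} blocks: $\varphi$ is only globally c.p.c., not order~$0$, so orthogonality of $1_{F^{(i)}}$ and $1_{F^{(j)}}$ gives no control on $[\varphi^{(i)}(\psi^{(i)}(x)),\varphi^{(j)}(\psi^{(j)}(x))]$, and the claimed estimate $\|[y_i,y_j]\|\to 0$ has no justification (if it held, finite decomposition rank would collapse to the zero-dimensional situation). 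Second, even granting approximate commutation, a quasitrace is only assumed linear on commutative $C^*$-subalgebras; passing from ``approximately commuting'' to ``approximately additive'' would need a perturbation result that is not available from Lemma~\ref{sum} (which only gives the one-sided bound $\tau(a+b)\le 2(\tau(a)+\tau(b))$, with no smallness as $\varepsilon\to 0$). The ultrapower variant inherits the same defect: in $A_\omega$ the maps $\Phi^{(i)}$ become exactly order~$0$ on each block, but $\tau_\omega\bigl(\sum_i\Phi^{(i)}(\Psi^{(i)}(a))\bigr)=\sum_i\tau_\omega\bigl(\Phi^{(i)}(\Psi^{(i)}(a))\bigr)$ still requires additivity of a quasitrace over $d+1$ non-commuting summands, which is precisely what is to be proved. (Your continuity argument is also too quick: boundedness of a non-linear map on the unit ball does not give norm continuity; continuity of $2$-quasitraces is a separate, nontrivial fact.)

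The paper's proof avoids ever needing such additivity. It uses Lemma~\ref{sum} only to \emph{dominate} $\tau$: iterating $\tau(a+b)\le 2(\tau(a)+\tau(b))$ over the $d+1$ blocks and passing to an ultrafilter limit, one gets $\tau\le C\,\tau_0$ where $\tau_0=C'\sum_i\tau_i$ and each $\tau_i:=\tau^{(i)}_\omega\circ q\circ\Psi^{(i)}$ is a genuine $\mathfrak A$-trace (here $q\circ\Psi^{(i)}$ is a $*$-homomorphism module map by the almost multiplicativity in Lemma~\ref{key}, and $\tau^{(i)}_\omega$ is the ultrafilter limit of the traces $\tau\circ\varphi_n^{(i)}$). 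The conclusion then follows from Lemma~\ref{lin}$(i)$: a quasitrace dominated by a trace is itself a trace. This domination-plus-Lemma~\ref{lin}$(i)$ mechanism is the decisive ingredient, and it is absent from your proposal; to repair your argument you should abandon the attempt to show $T_\varepsilon\to\tau$ pointwise and instead prove only the inequality $\tau\le\mathrm{const}\cdot(\text{trace})$ and invoke Lemma~\ref{lin}$(i)$.
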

\begin{proof}
By Lemma \ref{key}, for each $n\geq 1$ and $\varepsilon > 0$, there is a finite dimensional C*-algebra $F_n = F_n^{(0)}\oplus \cdots\oplus F_n^{(d)}$, with each $F_n^{(i)}$ a full matrix algebra, and admissible c.c.p. maps $\psi_n:  A\to F_n\otimes\mathfrak A$ and c.c.p. module maps $\varphi_n: F_n\otimes\mathfrak A\to A$ such that the restriction $\varphi_n^{(i)}$ of $\varphi_n$ to $F_n^{(i)}$ is order 0 module map, for $0\leq i\leq d$, such that $\varphi_n\circ\psi_n\to {\rm id}_A$ in point-norm, and for the induced map $\Psi:=(\psi_n): A\to \prod_n F_n\otimes\mathfrak A$, $q\circ\Psi: A\to \prod_n (F_n\otimes\mathfrak A)/\bigoplus_n (F_n\otimes\mathfrak A)$ is a $*$-homomorphism module map, where $q$ is the corresponding quotient map. let $\tau: A\to\mathfrak A$ be an $\mathfrak A$-quasitrace. By Lemma \ref{lin}$(ii)$, $\tau\circ\varphi_n^{(i)}: F_n^{(i)}\otimes \mathfrak A\to \mathfrak A$ is an $\mathfrak A$-quasitrace, which is then an $\mathfrak A$-trace, by Lemma \ref{qt}. Since $\sup_n\|\tau\circ\varphi_n^{(i)}\|<\infty$, for a free unltrafilter $\omega$ on $\mathbb N$, we get an $\mathfrak A$-trace  $\tau_\omega^{(i)}: \prod_n (F_n\otimes\mathfrak A)/\bigoplus_n (F_n\otimes\mathfrak A)\to \mathfrak A$, defined by, $$\tau_\omega^{(i)}(q(x)):=\lim_{n\to\omega}\tau(\varphi_n^{(i)}(x_n));\ \ \ x:=(x_n)\in \prod_n (F_n\otimes\mathfrak A),$$
for $0\leq i\leq d$, which exists since $\sup_{a\in A_{+,1}} \tau(a)\leq \sup_{a\in A_{+,1}} D_\tau(a)<\infty$. For the $*$-homomorphism module map $\Psi^{(i)}:=(\psi_n^{(i)}): A\to \prod_n F_n^{(i)}\otimes\mathfrak A$, $$\tau_i:=\tau_\omega^{(i)}\circ q\circ \Psi^{(i)}: A\to\mathfrak A$$ is an $\mathfrak A$-trace. By Lemma \ref{sum}, 
\begin{align*}
\tau(a)&=\lim_{n\to\omega} \tau(\varphi_n\circ\psi_n(a))\\&\leq 2^n\sum_{i=0}^{d}\lim_{n\to\omega} \tau(\varphi_n^{(i)}\circ\psi^{(i)}_n(a))\\&=2^n\sum_{i=0}^{d}\lim_{n\to\omega}\tau_\omega^{(i)}\circ q\circ \Psi^{(i)}\\&=2^n\sum_{i=0}^{d}\tau_i(a)=:\tau_0(a),
\end{align*}
where $\tau_0$ is an $\mathfrak A$-trace. Therefore, $\tau$ is an $\mathfrak A$-trace, by Lemma \ref{lin}$(i)$.
\end{proof}

\begin{remark}
Under the weaker condition ${\rm dim}_{\rm nuc}^\mathfrak A(A)<\infty$, if the upward maps $\varphi_n$ in Definition \ref{nuclear} could be taken to be c.p. module maps (not just admissible maps), then an argument similar to that of used in the proof of Lemma \ref{key} shows that the downward maps $\psi_n$ could be taken to be almost order 0 in the sense of \cite{w}. The above argument could then be repeated with the difference that now $\Psi$ is only an order 0 module map. The fact that each $\tau_i$ is an $\mathfrak A$-trace now follows from Lemma \ref{lin}$(ii)$, and the proof could be continued, showing that in this weaker case, if one could arrange for upward module maps, we still get $\mathcal{QT}_\mathfrak A(A)=\mathcal{T}_\mathfrak A(A)$.   
\end{remark}

We say that $a,b\in A$ are orthogonal, writing $a\perp b$, if multiplication of any of $a$ or $a^*$ to $b$ or $b^*$ is zero. 

\begin{definition}
Let $A$ be unital and $\mathfrak A$ be unital and monotone complete. An $\mathfrak A$-rank function is a function $D: A\to \mathfrak A_{+,1}$ satisfying,

(1) $\sup D= 1$,

(2) If $a\perp b$ then $D(a+b)=
D(a) + D(b)$,

(3) $D(a) = D(aa^*) = D(a^*a) = D(a^*)$, for $a\in A$,

(4) If $0\leq a \leq b$, then $D(a)\leq D(b)$,

(5) If $a\lesssim b$ ($a$ is Cuntz-subequivalent to $b$), then $D(a)\leq D(b)$,

\noindent A rank function is subadditive if, 

(6) $D(a + b) \leq D(a) + D(b)$, for $a,b\in A$,

\noindent   weakly subudditive if, 

(6') $D(a + b) \leq D(a) + D(b)$, for $a,b\in A_+$ with $ab=ba$,

\noindent and lower semicontinuous (l.s.c.) if,

(7)  whenever $a_n\to a$ in norm, for each $\varepsilon>0$, $(1-\varepsilon)D(a)\leq D(a_n)$, for large $n$. 
\end{definition}

When $A$ is not unital, one could replace (1) by (1)$^{'}$ $\sup D<\infty$ (which is then automatic: if there is a sequence $(a_n)\subseteq A_+$ with $D(a_n) \geq n1_\mathfrak A$ and $\|a_n\|< 2^{-n}$, then $\|D(\Sigma_n a_n)\|\geq n$, for each $n\geq 1$, which is absurd). 
We denote the set of subadditive lower semicontinuous  $\mathfrak A$-rank functions on $A$ by $\mathcal{RF}_\mathfrak A (A)$. 

Given $\varepsilon>0$, let $f_\varepsilon$ be the continuous function on $\mathbb R$ which
is zero on $(-\infty,\varepsilon/2]$, linear on $[\varepsilon/2,\varepsilon]$, and one on $[a, +\infty)$.
A proof almost identical to that of \cite[Proposition I.1.5]{bh} gives the following result. 

\begin{lemma} [Blackadar-Handelman]  \label{lsc}
Let $A$ be unital and $\mathfrak A$ be unital and monotone complete. For an $\mathfrak A$-rank function  $D: A\to \mathfrak A_{+,1}$, the following are equivalent:

$(i)$ $D$ is l.s.c.,

$(ii)$ $D(a)=\sup_{\varepsilon>0} D(f_\varepsilon(a)).$
\end{lemma}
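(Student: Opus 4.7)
The plan is to adapt the classical Blackadar--Handelman proof of \cite[Proposition I.1.5]{bh} to the $\mathfrak A$-valued setting, using monotone completeness of $\mathfrak A$ together with the preceding lemma (which identifies the supremum of a bounded increasing sequence in $\mathfrak A_+$ with its norm limit) wherever the scalar proof invokes elementary order properties of $\mathbb R_+$. Using property (3) of an $\mathfrak A$-rank function and continuity of multiplication, we may reduce at the outset to the case $a,a_n\in A_+$ (pass to $a^*a$ and $a_n^*a_n$, which still satisfy $\|a_n^*a_n-a^*a\|\to 0$).

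For $(i)\Rightarrow(ii)$: As $\varepsilon\downarrow 0$ the family $\{f_\varepsilon(a)\}$ is norm-increasing in $A_+$ with norm-limit $a$, and since $f_\varepsilon$ vanishes on a neighborhood of $0$ we have $f_\varepsilon(a)\lesssim a$ by standard functional calculus. Property (5) then gives $D(f_\varepsilon(a))\le D(a)$ for every $\varepsilon$, so $\{D(f_\varepsilon(a))\}$ is an increasing net in $\mathfrak A_+$ bounded above by $D(a)$; its supremum exists by monotone completeness and is at most $D(a)$. For the reverse inequality, apply the l.s.c.\ hypothesis (7) to the sequence $f_{1/n}(a)\to a$: for every $\delta>0$ and all large $n$,
\[
(1-\delta)D(a)\le D(f_{1/n}(a))\le \sup_{\varepsilon>0} D(f_\varepsilon(a)).
\]
Testing against an arbitrary state of $\mathfrak A$ shows that $(1-\delta)\alpha\le \beta$ for every $\delta>0$ forces $\alpha\le \beta$ in $\mathfrak A_+$, yielding the required equality.

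For $(ii)\Rightarrow(i)$: By (ii) the sequence $\{D(f_{1/k}(a))\}_k\subseteq \mathfrak A_+$ is increasing (as above), bounded above by $D(a)$, with supremum $D(a)$. The preceding lemma then applies directly: for every $\delta>0$ there exists $k\ge 1$ with $(1-\tfrac{\delta}{2})D(a)\le D(f_{1/k}(a))$. For $n$ large enough that $\|a-a_n\|<\tfrac{1}{2k}$, R{\o}rdam's subequivalence lemma gives
\[
f_{1/k}(a)\lesssim \bigl(a-\tfrac{1}{2k}\bigr)_+\lesssim a_n,
\]
so property (5) furnishes $D(f_{1/k}(a))\le D(a_n)$. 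Combining, $(1-\delta)D(a)\le D(a_n)$ for all sufficiently large $n$, which is precisely condition (7).

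The main technical point, rather than a genuine obstacle, is the preceding lemma: it is exactly the input needed to translate equality of $D(a)$ with a supremum (as in (ii)) into the multiplicative-type estimate $(1-\tfrac{\delta}{2})D(a)\le D(f_{1/k}(a))$ in $\mathfrak A_+$, which is the $\mathfrak A$-valued analogue of the elementary step ``$D(f_\varepsilon(a))$ is within $\delta$ of $D(a)$ for small $\varepsilon$.'' Dually, the state-separation argument $(1-\delta)\alpha\le \beta\ (\forall\delta)\Rightarrow \alpha\le \beta$ replaces the scalar fact $\liminf\ge$ something. Once these order-theoretic translations are in hand, R{\o}rdam's subequivalence lemma is algebraic and carries over verbatim from the scalar setting, and the rest is routine.
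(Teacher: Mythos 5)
Your overall strategy is the right one and matches the paper's, which offers no argument beyond the remark that the proof is ``almost identical'' to \cite[Proposition I.1.5]{bh}; but as written your proof of $(i)\Rightarrow(ii)$ contains a genuine error. The claim that the family $\{f_{1/n}(a)\}$ has norm limit $a$ is false: since $f_\varepsilon$ equals $1$ on $[\varepsilon,\infty)$, the element $f_\varepsilon(a)$ is a cut-off approximation to the support projection of $a$, not to $a$ itself (for instance $f_\varepsilon(\tfrac12 1_A)=1_A$ for all $\varepsilon\le\tfrac12$, so $\|f_\varepsilon(a)-a\|=\tfrac12$ for all small $\varepsilon$). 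Consequently hypothesis (7) cannot be applied to the sequence $f_{1/n}(a)$, and the inequality $D(a)\le\sup_{\varepsilon>0}D(f_\varepsilon(a))$ is not established. The repair is exactly the device used in Blackadar--Handelman: apply (7) to the sequence $(a-\tfrac1n)_+$, which does converge to $a$ in norm (distance at most $\tfrac1n$), and observe that $(a-\tfrac1n)_+\lesssim f_{1/n}(a)$ because $f_{1/n}$ equals $1$ on the open support of $t\mapsto(t-\tfrac1n)_+$; property (5) then gives $(1-\delta)D(a)\le D((a-\tfrac1n)_+)\le D(f_{1/n}(a))$ for large $n$, after which your state-separation step $(1-\delta)\alpha\le\beta$ for all $\delta$ implies $\alpha\le\beta$ closes the argument correctly.

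In $(ii)\Rightarrow(i)$ there is a further soft spot: the preceding lemma, as stated, only says that the bounded increasing sequence $\{D(f_{1/k}(a))\}$ converges in norm to its supremum $D(a)$, hence for each $\eta>0$ one gets $D(a)-\eta 1_\mathfrak A\le D(f_{1/k}(a))$ for large $k$. This does not ``apply directly'' to yield the multiplicative estimate $(1-\tfrac\delta2)D(a)\le D(f_{1/k}(a))$ you need for condition (7), since that would require $\eta 1_\mathfrak A\le\tfrac\delta2 D(a)$, i.e.\ $D(a)$ bounded below. What your argument really uses is the stronger assertion that for each $\delta$ some $k$ satisfies $D(f_{1/k}(a))\ge(1-\delta)\sup_j D(f_{1/j}(a))$; this is the claim made inside the paper's own proof of that lemma, so you are at least consistent with the paper's framework, but it is a strictly stronger statement than the lemma's conclusion (and it is delicate: for a general bounded increasing sequence in a monotone complete algebra, such as an increasing sequence of projections in $\ell^\infty$, it fails). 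Either invoke that property explicitly or be aware that the lemma as stated only delivers an additive form of lower semicontinuity. The remainder of your $(ii)\Rightarrow(i)$ — R{\o}rdam's lemma giving $f_{1/k}(a)\lesssim(a-\tfrac1{2k})_+\lesssim a_n$ for $\|a-a_n\|<\tfrac1{2k}$, followed by (5) — is correct.
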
	

We adapt the proof of \cite[Theorem II.1.7]{bh} to get the following result.

\begin{proposition} \label{qt2}
Let  $A$ and $\mathfrak A$  be  finite AW*-algebra and tr$_A: A\to Z(A)$ and tr$_\mathfrak A: \mathfrak A\to\mathfrak Z$ be  center-valued quasitraces on $A$ and $\mathfrak A$. Assume that $A$ is weakly central as a $\mathfrak A$-module. Then $\tau=\phi\circ {\rm tr}_A$ defines a one-to-one correspondence between $\mathfrak Z$-valued $\mathfrak Z$-quasitraces $\tau\in\mathcal {QT}_\mathfrak Z(A)$ and c.p. module maps $\phi\in CP_\mathfrak Z(Z(A), \mathfrak Z)$. 	
\end{proposition}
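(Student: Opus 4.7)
The plan is to exhibit the asserted bijection via $\tau = \phi \circ \text{tr}_A$, with inverse $\tau \mapsto \tau|_{Z(A)}$. Injectivity is automatic, since $\text{tr}_A$ restricts to the identity on $Z(A)$, so $(\phi \circ \text{tr}_A)|_{Z(A)} = \phi$. The substance lies in two verifications: (a) $\phi \circ \text{tr}_A \in \mathcal{QT}_\mathfrak Z(A)$ for each $\phi \in CP_\mathfrak Z(Z(A), \mathfrak Z)$, and (b) every $\tau \in \mathcal{QT}_\mathfrak Z(A)$ satisfies $\tau = (\tau|_{Z(A)}) \circ \text{tr}_A$.

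For (a), put $\tau := \phi \circ \text{tr}_A$. Positivity and normalization follow from the c.p.\ and unital properties of $\phi$ combined with the analogous properties of $\text{tr}_A$; the tracial identity $\tau(xx^*)=\tau(x^*x)$ transfers from $\text{tr}_A$; linearity on commutative $C^*$-subalgebras of $A$ comes from the corresponding property of $\text{tr}_A$ together with the $\mathbb C$-linearity of $\phi$; and the 2-quasitrace property is witnessed by $\tau_2 := \phi \circ \text{tr}_{\mathbb M_2(A)}$, using that $\mathbb M_2(A)$ is again a finite AW*-algebra whose center is canonically identified with $Z(A)$. The subtle point is the $\mathfrak Z$-module property: weak centrality gives $z_\alpha := \alpha \cdot 1_A \in Z(A)$ for each $\alpha \in \mathfrak Z$, and the bimodule compatibility $\alpha \cdot (1_A a) = (\alpha \cdot 1_A) a$ then yields $\alpha \cdot a = z_\alpha a$. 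Invoking the $Z(A)$-linearity of the center-valued quasitrace (a Berberian-type fact for AW*-quasitraces, compare the proof of \cite[II.1.6]{bh}), $\text{tr}_A(\alpha \cdot a) = z_\alpha\,\text{tr}_A(a) = \alpha \cdot \text{tr}_A(a)$, and $\mathfrak Z$-linearity of $\phi$ gives $\tau(\alpha \cdot a) = \alpha\,\tau(a)$. For (b), the restriction $\phi := \tau|_{Z(A)}$ is unital, positive, and $\mathfrak Z$-linear on the commutative $C^*$-algebra $Z(A)$, and is linear on $Z(A)$ by property (2) of $\mathcal{QT}_\mathfrak Z(A)$; complete positivity is automatic on a commutative domain.

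The main content of (b) is the identity $\tau = \phi \circ \text{tr}_A$. I will reduce this to the scalar Blackadar-Handelman theorem \cite[II.1.7]{bh} by slicing with states on the center. Fix any $f \in \mathcal S(\mathfrak Z)$. Then $f \circ \tau : A \to \mathbb C$ is a normalized scalar 2-quasitrace on $A$ in the Blackadar-Handelman sense (the required 2-extension is $f \circ \tau_2$, where $\tau_2$ witnesses the 2-quasitrace property of $\tau$), and $f \circ \phi$ is a state on $Z(A)$. The two scalar quasitraces $f \circ \tau$ and $f \circ (\phi \circ \text{tr}_A)$ on the finite AW*-algebra $A$ restrict to the \emph{same} state $f \circ \phi$ on $Z(A)$; by the uniqueness half of \cite[II.1.7]{bh}, which characterises scalar quasitraces on a finite AW*-algebra via their restrictions to $Z(A)$ through composition with $\text{tr}_A$, they coincide on all of $A$. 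Hence $f \circ \tau = f \circ (\phi \circ \text{tr}_A)$ for every $f$, and since states separate points of $\mathfrak Z$, the identity $\tau = \phi \circ \text{tr}_A$ follows.

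The main obstacle is marshalling the scalar reduction cleanly: one must verify both that the $\mathfrak Z$-quasitrace structure genuinely passes to a scalar 2-quasitrace upon slicing with a state on $\mathfrak Z$ (this is precisely where property (4) of $\mathcal{QT}_\mathfrak Z(A)$ is used), and that the $Z(A)$-linearity of $\text{tr}_A$ holds in our AW*-setting despite $\text{tr}_A$ being only a quasitrace. Both facts are standard in the AW*-quasitrace literature but need to be explicitly invoked in the module context.
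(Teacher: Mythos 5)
Your argument is correct, and it diverges from the paper's proof precisely at the crucial step. The parts you share with the paper: injectivity from ${\rm tr}_A|_{Z(A)}={\rm id}$, and the verification that $\phi\circ{\rm tr}_A$ is a $\mathfrak Z$-quasitrace, where the only non-routine point is the module property, which you obtain exactly as the paper does from weak centrality ($\alpha\cdot 1_A\in Z(A)$), the compatibility of the actions, and the $Z(A)$-linearity of ${\rm tr}_A$. The difference is the surjectivity identity $\tau=(\tau|_{Z(A)})\circ{\rm tr}_A$. The paper proves it directly at the level of the $\mathfrak Z$-valued maps by adapting the Blackadar--Handelman argument: the two maps agree on central, hence on simple, projections; total disconnectedness of the spectrum of $Z(A)$ lets one sandwich an arbitrary projection $p$ between finite orthogonal sums of simple projections whose ${\rm tr}_A$-difference is at most $\varepsilon 1_A$, giving $\|\tau(p)-\phi({\rm tr}_A(p))\|\leq 3\varepsilon$; equality on projections then propagates to all of $A$ by the automatic continuity of quasitraces on AW*-algebras. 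You instead quote the scalar theorem \cite[II.1.7]{bh} as a black box and reduce to it by slicing with states $f\in\mathcal S(\mathfrak Z)$: $f\circ\tau$ is a scalar $2$-quasitrace restricting to $f\circ\phi$ on $Z(A)$, hence equals $(f\circ\phi)\circ{\rm tr}_A$, and states separate the points of $\mathfrak Z$. This slicing device is legitimate (the paper itself uses it in Lemmas \ref{lin} and \ref{qt}) and buys a shorter proof; the paper's route re-proves the scalar mechanism in the module setting, which makes explicit exactly which structural ingredients (simple projections, total disconnectedness of the spectrum of $Z(A)$, automatic continuity) are being used.

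Two points you should make explicit. First, \cite[II.1.7]{bh} concerns the canonical center-valued quasitrace, so you need the uniqueness of the normalized center-valued quasitrace on a finite AW*-algebra to identify it with the ${\rm tr}_A$ of the hypothesis. Second, with the normalization $\tau_2(1)=1_{\mathfrak Z}$ imposed by condition (1), your witness $\tau_2:=\phi\circ{\rm tr}_{\mathbb M_2(A)}$ gives $\tau_2(a\otimes e_{11})=\tfrac{1}{2}\,\tau(a)$, so it must be rescaled (or the non-normalized canonical extension used); this is a convention mismatch already present in the statement of condition (4), so it is a cosmetic fix rather than a gap.
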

 \begin{proof}
 	Let $\tau\in \mathcal{QT}_\mathfrak Z(A)$ and let $\phi_0$ be the restriction of $\tau$ on $Z(A)$. Then $\phi:={\rm tr}_\mathfrak A\circ\phi_0$ is a c.p.  map, since $\tau$ is linear and positive on $Z(A)$, and positive linear maps into commutative C*-algebras are automatically c.p. This is also automatically a $\mathfrak Z$-module map, since given $\alpha\in \mathfrak Z$,  
 $$\phi(a\cdot\alpha)={\rm tr}_\mathfrak A(\tau(a\cdot\alpha))={\rm tr}_\mathfrak A(\tau(a)\alpha)={\rm tr}_\mathfrak A(\tau(a))\alpha,$$
 for $a\in A$, as ${\rm tr}_\mathfrak A$ is a $\mathfrak Z$-module map.
 		
 	Put $\tau_1:= \phi\circ {\rm tr}_A$ and observe that $\tau_1(e) = \tau(e)$,
 	for every central projection $e\in A$, and so for every simple projection $e\in A$ (i.e., a projection $e$ which divides its central cover; c.f., \cite[page 157]{be}). Since the spectrum of $Z(A)$ is totally disconnected, one may find finite
 	orthogonal families $\{q_i\}$ and $\{r_j\}$ of simple projections in $A$  with $\Sigma_i q_i \leq p \leq \Sigma_j r_j$ and ${\rm tr}_A(\Sigma_j r_j-\Sigma_i q_i) \leq \varepsilon 1_A$. Since both $\tau(p)$ and $\tau_1(p)$ are between $\tau_1(\Sigma_i q_i)$ and $\tau_1(\Sigma_j r_j)$ in $\mathfrak A_+$, it follows that the self-adjoint element $\alpha:= \tau(p)-\tau_1(p)$ lies in between $-\beta$ and $\beta$, for positive $\beta:={\rm tr}_A(\Sigma_j r_j-\Sigma_i q_i)\leq\varepsilon 1_A$, and  we have $0\leq \beta-\alpha\leq 2\beta$. Hence, 
 	$$\|\tau(p)-\tau_1(p)\|=\|\alpha\|\leq \|\beta\|+\|\beta-\alpha\|\leq 3\|\beta\|\leq 3\varepsilon,$$
 	and since $\varepsilon>0$ was arbitrary, $\tau=\tau_1$ on
projections, and so everywhere by continuity. 

Conversely, if $\phi\in CP_\mathfrak Z(Z(A), \mathfrak Z)$, then $\tau:=\phi\circ {\rm tr}_A$ clearly has all the conditions of a $\mathfrak Z$-quasitrace, except the module map property. Now, by assumption we have, 
$1_A\cdot\alpha\in Z(A)$, for each $\alpha\in\mathfrak Z$, thus,
 $$\tau((a\cdot\alpha))=\phi\big({\rm tr}_A(a(1_A\cdot\alpha))\big)=\phi\big({\rm tr}_A(a)(1_A\cdot\alpha)\big)=\phi\big({\rm tr}_A(a)\cdot\alpha\big)=\phi\big({\rm tr}_A(a)\big)\cdot\alpha,$$
for $a\in A$, as ${\rm tr}_A$ is a $Z(A)$-module map.

Finally, if $\phi_1\circ{\rm tr}_A=\phi_1\circ{\rm tr}_A$, then $\phi_1=\phi_2$, as ${\rm tr}_A=$id on $Z(A)$.  
 \end{proof}

The next result extends \cite[Theorem 11.2.2]{bh}.

\begin{theorem} \label{main}
Let $A$ be unital and $\mathfrak A$ be an AW*-algebra. There is an affine bijection: $\mathcal{QT}_\mathfrak A(A)\to \mathcal{RF}_\mathfrak A(A)$ with continuous inverse.
\end{theorem}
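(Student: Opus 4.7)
The plan is to adapt the Blackadar--Handelman correspondence between quasitraces and dimension functions (cf.~\cite{bh}) to the vector-valued setting, replacing scalar suprema and Riemann integrals by monotone limits in the AW*-algebra $\mathfrak{A}$. I would define the two maps
$$\Phi: \mathcal{QT}_\mathfrak{A}(A) \to \mathcal{RF}_\mathfrak{A}(A), \qquad \Phi(\tau)(a) := \sup_{\varepsilon > 0} \tau(f_\varepsilon(a^*a)),$$
and
$$\Psi: \mathcal{RF}_\mathfrak{A}(A) \to \mathcal{QT}_\mathfrak{A}(A), \qquad \Psi(D)(a) := \int_0^{\|a\|} D(f_\varepsilon(a)) \, d\varepsilon \quad (a \in A_+),$$
where the supremum is taken in $\mathfrak{A}$ (and exists by monotone completeness of the AW*-algebra), and the integral is a norm-limit of Riemann sums in $\mathfrak{A}$; then $\Psi(D)$ is extended to $A_{sa}$ using the Hahn-type decomposition of Lemma~\ref{mc} and complex-linearly to $A$.

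For $\Phi$, I would verify the axioms of $\mathcal{RF}_\mathfrak{A}(A)$ in order. Normalization and orthogonal additivity follow from $f_\varepsilon(1_A)=1_A$ and from the fact that $a \perp b$ forces $f_\varepsilon(a+b)=f_\varepsilon(a)+f_\varepsilon(b)$ with the summands orthogonal. The Cuntz-invariance conditions (3)--(5) follow from the $\mathbb{M}_2$-extension built into axiom~(4) of $\mathcal{QT}_\mathfrak{A}(A)$, which gives $\tau(x^*x)=\tau(xx^*)$ and thus equal $\tau$-values on $f_\varepsilon(x^*x)$ and $f_\varepsilon(xx^*)$; subadditivity follows from Lemma~\ref{sum} after passing to the supremum; and lower semicontinuity is built in via Lemma~\ref{lsc}. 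For $\Psi$, normalization reads $\int_0^1 1_\mathfrak{A}\,d\varepsilon = 1_\mathfrak{A}$; traciality reduces to axiom~(3) of the rank function applied pointwise under the integral; the $\mathfrak{A}$-module property comes from $f_\varepsilon(a \cdot \alpha) = f_\varepsilon(a) \cdot \alpha$ for positive $\alpha$ commuting with $a$, combined with $\mathfrak{A}$-linearity of the Riemann sums; linearity on commutative C*-subalgebras reduces by density and orthogonal additivity of $D$ to linearity on step functions of spectral projections; and axiom~(4) of the quasitrace is inherited from applying the construction to $\mathbb{M}_n(A)$ with the induced rank function.

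That $\Phi$ and $\Psi$ are mutually inverse follows from the spectral identities $a = \int_0^{\|a\|} f_\varepsilon(a)\,d\varepsilon$ (norm-convergent for $a \in A_+$) and $\sup_{\varepsilon > 0} f_\varepsilon(a) = \operatorname{supp}(a)$ in $A^{**}$, together with the interchange of $\tau$ with monotone limits on commutative C*-subalgebras and the lower semicontinuity built into axiom~(7). Affineness of both maps is immediate from the definitions, and continuity of $\Psi$ in the point-norm topology follows from a dominated-convergence argument inside $\mathfrak{A}$, using the monotone structure of $\varepsilon \mapsto D(f_\varepsilon(a))$. The main obstacle will be the rigorous construction of the $\mathfrak{A}$-valued Riemann integral: one cannot appeal directly to Lebesgue theory, so I would fix a cofinal sequence of refining partitions of $[0,\|a\|]$, use monotone completeness of $\mathfrak{A}$ to form the associated suprema, and run a squeeze argument based on the monotonicity of the integrand to show that the limit is independent of the partition scheme. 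A secondary subtlety, specific to the module setting, is verifying the module property on the negative part $\Psi(D)(a_-)$ arising from the Hahn decomposition, where Lemma~\ref{mc} provides the required $\mathfrak{A}$-linearity for the complex extension.
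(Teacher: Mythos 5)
Your map $\Phi$ is exactly the paper's $\tau\mapsto D_\tau$, but the proposed inverse is not correct as written, and this is the heart of the theorem. With the paper's convention for $f_\varepsilon$ (zero on $(-\infty,\varepsilon/2]$, one on $[\varepsilon,\infty)$), $f_\varepsilon(a)$ is Cuntz equivalent to $(a-\varepsilon/2)_+$, so $D(f_\varepsilon(a))=D\big((a-\varepsilon/2)_+\big)$ and your integral equals $2\int_0^{\|a\|/2}D\big((a-s)_+\big)\,ds$, which does not recover $\tau$ from $D=\Phi(\tau)$. Concretely, take orthogonal nonzero projections $p,q$ and $a=p+\tfrac12 q$, so $\|a\|=1$: for every $\varepsilon\in(0,1)$ both coefficients $f_\varepsilon(1)$ and $f_\varepsilon(\tfrac12)$ are strictly positive, hence $f_\varepsilon(a)$ is Cuntz equivalent to $p+q$ and $D(f_\varepsilon(a))=\tau(p)+\tau(q)$, giving $\Psi(\Phi(\tau))(a)=\tau(p)+\tau(q)\neq\tau(p)+\tfrac12\tau(q)=\tau(a)$. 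The correct reconstruction is $\int_0^{\|a\|}D\big((a-t)_+\big)\,dt$ (a fixable normalization, but as stated $\Phi$ and $\Psi$ are not mutually inverse). A second error in the same vein: subadditivity of $\Phi(\tau)$ does not follow from Lemma \ref{sum}, which only yields $D_\tau(a+b)\leq 2\big(D_\tau(a)+D_\tau(b)\big)$; one needs the $\mathbb M_2$-argument the paper uses, namely $(a+b)e_{11}\preceq ae_{11}+be_{22}$ together with the extension of $\tau$ to $\mathbb M_2(A)$ provided by axiom (4) of a quasitrace.

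Two further steps are unjustified and they are precisely where the module setting bites. First, the asserted identity $f_\varepsilon(a\cdot\alpha)=f_\varepsilon(a)\cdot\alpha$ is false ($f_\varepsilon$ is not homogeneous, already for scalars), and since the axioms of an $\mathfrak A$-rank function contain no compatibility between $D$ and the $\mathfrak A$-action, the statement that $\Psi(D)$ preserves the action is a genuine point that must be argued, not read off; the paper handles it by extending $D$ to an enveloping AW*-algebra and arguing as in Proposition \ref{qt2}. Second, linearity of $\Psi(D)$ on commutative C*-subalgebras cannot be reduced to ``step functions of spectral projections'': those projections generally do not lie in $A$, and $D$ is only defined on $A$. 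This is exactly the difficulty the paper's construction is designed to avoid: for each state $f$ of $\mathfrak A$ and each commutative subalgebra $B\simeq C(X)$, it turns $f\circ D$ into a countably additive measure $\mu_{{}_{D,f}}$ on $X$, defines $\tau_D(b)$ by integration (a priori in $\mathfrak A^{**}$), and uses weak$^*$-continuity in $f$ to see that the values land in $\mathfrak A$; linearity on $B$ and continuity of $D\mapsto\tau_D$ then come from linearity of the integral and uniform convergence of the measures. Finally, a smaller point: Lemma \ref{mc} decomposes maps, not elements, so it is not the tool for extending $\Psi(D)$ from $A_+$ to $A_{sa}$; that extension uses $a=a_+-a_-$ and axiom (3) of a quasitrace. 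Until the inverse formula, subadditivity, the module property, and commutative linearity are repaired along these lines, the proposal does not prove the theorem.
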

 \begin{proof}
We may assume that $A$ is unital. Given $D\in \mathcal{RF}_\mathfrak A(A)$, $f\in \mathfrak A^*$, and commutative unital C*-subalgebra $B\simeq C(X)$ of $A$, there is a countably additive probability measure on the algebra generated by the $\sigma$-compact open subsets
of $X$ by $\mu_{{}_{D,f}}(U):=f(D(g))$, for $g\in C(X)$ with $0\leq g\leq 1$ and ${\rm supp}(g)=U$. Define $\tau_D: B\to \mathfrak A^{**}$ by
$$ \langle \tau_D(b),f\rangle:=\int_X \hat b d\mu_{{}_{D,f}}\ \ (b\in B, f\in \mathfrak A^*).$$
where $b\mapsto \hat b$ is the Gelfand transform on $B$. Fix $b\in B$. We claim that $\tau_D(b)$ is weak$^*$-continuous on $\mathfrak A^*$. By Jordan decomposition, we only need to show that if $(f_i)$ is a net in $\mathfrak A^*_+$ with $f_i\to 0$ (weak$^*$), then $\int_X \hat b d\mu_{{}_{D,f_i}}\to 0$. But since each $\mu_{{}_{D,f_i}}$ is a positive measure in this case, $$\int_X \hat b d\mu_{{}_{D,f_i}} \leq \|\hat b\|\mu_{{}_{D,f_i}}(X)=\|\hat b\|f_i(D(1))=\|\hat b\|f_i(1_\mathfrak A)\to 0,$$
as $i\to infty$. This means that the range of $\tau_D$ is inside $\mathfrak A$. Summing up,  $\tau_D(a)\in\mathfrak A$ is defined
unambiguously on normal elements $a\in A$.
For an arbitrary element $a\in A$, write $a = a_1 + ia_2$ with $a_1, a_2$ selfadjoint, and define $\tau_D(a) = \tau_D(a_1) + i\tau_D(a_2)$. Let us observe that $\tau_D(x^*x) = \tau_D(xx^*)$. Let $A\subseteq \mathbb B(H)$ be a faithful representation  on a Hilbert space $H$, and $x = u|x|$ be the polar
decomposition of $x$ with $u\in\mathbb B(H)$. Take a non negative continuous function $h$ on $\sigma(x^*x)\cup\{0\}=\sigma(xx^*)\cup\{0\}$ with $h(0)=0$, and put $y = uh(x^*x)^{\frac{1}{2}}\in A$, then $y^*y =h(x^*x)$ and,
$yy^* =h(xx^*)$. Since, 
$$D(h(xx^*))=D(yy^*)=D(y^*y)=D(h(x^*x)),$$
and C*-subalgebras generated by the sets $\{h(xx^*), 1_A\}$ and $\{h(x^*x), 1_A\}$ are the same, it follows that $\tau_D(h(xx^*))=\tau_D(h(x^*x))$, for each function $h$ as above, and the claim follows. Since $D$ is subadditive, it could be extended it can be extended to an enveloping AW*-algebra of $A$ (as in
\cite[Section I.4]{bh}), therefore, $\tau_D$ is an $\mathfrak A$-quasitrace, by an argument as in the proof of Proposition \ref{qt2}. Finally, if $D_i\to D$ in point-norm, then $\mu_{{}_{D_i,f}}\to \mu_{{}_{D,f}}$, uniformly on $f$, thus, $\tau_{D_i}\to \tau_D$ in point-norm.

Conversely, given $\tau\in\mathcal{QT}_\mathfrak A(A)$,  define $D_\tau(a) =\sup_{\varepsilon>0} \tau(f_\varepsilon(|a|))$. For $f\in \mathcal S(\mathfrak A)$, being a positive linear functional on the C*-subalgebra $B$ generated by $|a|$, $f\circ \tau$ is bounded, say by $M>0$, thus $\tau(f_\varepsilon(|a|))\leq M1_\mathfrak A$, and so the above supremum exists, as masas in $\mathfrak A$ are monotone complete. Since $\tau$ extends to $\mathbb M_2(A)$, so does $D_\tau$. Given $a,b\in A$, let $a_n:=f_{\frac{1}{n}}(aa^*)$ and $a^{'}_n:=f_{\frac{1}{n}}(a^*a)$, and define $b_n, b^{'}_n$ similarly. Then for matrix units $e_{ij}\in \mathbb M_2(A)$ (with $1_A$ at $(i,j)$-position and zero elsewhere), 
$$(a_ne_{11}+b_ne_{12})(ae_{11}+be_{22})(a^{'}_ne_{11}+b^{'}_ne_{21})\to (a+b)e_{11},$$
in norm, as $n\to \infty$, that is,
$$(a+b)e_{11}\preceq ae_{11}+be_{22},$$
therefore, $D_\tau(a+b)=D_\tau((a+b)e_{11})\leq D_\tau(ae_{11}+be_{22})=D_\tau(a)+D_\tau(b),$
i.e., $D_\tau$ is subadditive. Also it follows from Lemma \ref{lsc}, that $D_\tau$ is l.s.c. The rest of properties follow from definition and $D_\tau$ is an $\mathfrak A$-rank function.
\end{proof} 

\begin{corollary}
Let $\mathfrak A$ be an AW*-algebra and $A$ be unital and exact. Then every $\mathfrak A$-rank function $D$ is of the form $D_\tau:=\sup_{\varepsilon>0} \tau(f_\varepsilon(|\cdot|))$, for some $\tau\in\mathcal T_\mathfrak A(A)$.
\end{corollary}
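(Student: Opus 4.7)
The plan is to combine Theorem \ref{main} with the Haagerup-type result of Lemma \ref{qt}. Starting from an $\mathfrak A$-rank function $D \in \mathcal{RF}_\mathfrak A(A)$, Theorem \ref{main} produces an $\mathfrak A$-quasitrace $\tau_D \in \mathcal{QT}_\mathfrak A(A)$ that is the image of $D$ under the affine bijection constructed there, and the inverse map sends a quasitrace $\tau$ to the rank function $D_\tau := \sup_{\varepsilon>0}\tau(f_\varepsilon(|\cdot|))$. So by construction $D = D_{\tau_D}$.

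It then remains to upgrade $\tau_D$ from an $\mathfrak A$-quasitrace to an $\mathfrak A$-trace. This is precisely where the hypothesis that $A$ is unital and exact enters: by Lemma \ref{qt} we have $\mathcal{QT}_\mathfrak A(A)=\mathcal{T}_\mathfrak A(A)$, so $\tau_D \in \mathcal{T}_\mathfrak A(A)$. Writing $\tau := \tau_D$, we conclude $D = D_\tau$ with $\tau$ an $\mathfrak A$-trace, which is exactly the claim.

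There is essentially no obstacle beyond stitching the two results together; the only point worth double-checking is that the specific formula in the statement agrees with the inverse map of Theorem \ref{main}. That is immediate from the construction: in the proof of Theorem \ref{main}, the inverse assignment $\tau \mapsto D_\tau$ is defined by $D_\tau(a)=\sup_{\varepsilon>0}\tau(f_\varepsilon(|a|))$ (using that masas in $\mathfrak A$ are monotone complete so the supremum exists in $\mathfrak A$), which is precisely the formula appearing in the corollary. Thus the proof reduces to a one-line citation of Theorem \ref{main} followed by an invocation of Lemma \ref{qt}.
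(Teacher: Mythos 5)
Your proposal is correct and is exactly the paper's argument: the paper's proof is the one-line citation "This follows from Lemma \ref{qt} and Theorem \ref{main}," i.e., the bijection of Theorem \ref{main} produces a quasitrace $\tau_D$ with $D=D_{\tau_D}$, and exactness plus unitality of $A$ upgrades it to an $\mathfrak A$-trace via Lemma \ref{qt}. Your additional check that the inverse map in Theorem \ref{main} is given by the stated formula $D_\tau=\sup_{\varepsilon>0}\tau(f_\varepsilon(|\cdot|))$ is a reasonable, correct verification of the same route.
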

\begin{proof}
This follows from Lemma \ref{qt} and Theorem \ref{main}.  
\end{proof}

Recall that an element $\alpha\in\mathcal A$ is strictly positive, writing $\alpha>0$, if $f(\alpha)>0$, for each $f\in\mathcal S(\mathfrak A)$. For $0\leq a\leq b$, we write $a<b$ if $b-a>0$.  

\begin{definition}
Let $\mathfrak A$ be a $\sigma$-unital C*-algebra. We say that $A$ has $\mathfrak A$-strict comparison of positive elements, or simply, $\mathfrak A$-SC, if for any $n\geq 1$ and $a,b\in\mathbb M_n(A)_+$, if $D_\tau(b)-D_\tau(a)>0$ in $\mathfrak A$, for each $\tau\in\mathcal T_\mathfrak A(A)$, then $a\preceq b$. 
\end{definition}

\begin{remark} \label{sc}
	Let $A$ has at least one non-zero $\mathfrak A$-trace.
	
$(i)$ When $A$ is simple, $\mathfrak A$-SC implies that  $a\preceq b$ holds for $a,b\in A_+$ with $a$ not Cuntz-equivalent to a projection, whenever $D_\tau(a)\leq D_\tau(b)$, for each non-zero $\mathfrak A$-trace $\tau$: Since $a$ is not Cuntz-equivalent to a projection, $0\in\sigma(a)$ is not an isolated point \cite[14.2.12]{s}. Choose a strictly decreasing sequence $(\varepsilon_n)$ of positive reals in $\sigma(a)$ going to 0. Since $\varepsilon_{n+1}<\varepsilon_n$, we have $(a-\varepsilon_{n})_+\preceq(a-\varepsilon_{n+1})_+$, thus, $D_\tau\big((a-\varepsilon_{n})_+\big)\leq D_\tau\big((a-\varepsilon_{n+1})_+\big)$, as $D_\tau$ is an $\mathfrak A$-rank function. Let $f\in \mathcal S(\mathfrak A)$ and $\mu_{{}_{D_\tau,f}}$ be the measure constructed in the proof of Theorem \ref{main}. Let $U_n$ be the set of elements $t\in (\varepsilon_{n+1},\varepsilon_n]$ for which $(t-\varepsilon_{n+1})_+>0$. Choose $g_n\in C(\sigma(a))$ with $g=0$ off $U_n$ and  $g(\varepsilon_{n+1})\neq 0$. Then, $$\mu_{{}_{D_\tau,f}}(U_n)\geq f(D_\tau(g))\geq f(\tau(g))>0,$$
where the last strict inequality follows from the fact that $f\circ\tau$ is non-zero trace on a simple C*-algebra, and so faithful. Now, 
$$f\big(D_\tau((a-\varepsilon_{n+1})_+)- D_\tau((a-\varepsilon_{n})_+)\big)=\mu_{{}_{D_\tau,f}}(U_n)>0,$$
for each $f\in \mathcal S(\mathfrak A)$, which means that, $$D_\tau\big((a-\varepsilon_{n})_+\big)< D_\tau\big((a-\varepsilon_{n+1})_+\big)\leq D_\tau(a)\leq D_\tau(b),$$ in $\mathfrak A$, thus, $(a-\varepsilon_{n})_+\preceq b$, and so $a\preceq b$, by \cite[14.1.8]{s}. 

$(ii)$ When $A$ is simple and exact, $\mathfrak A$-SC implies that  $a\preceq b$ holds for $a$ Cuntz-equivalent to a projection and $b$ not Cuntz-equivalent to any projection, exactly when $D_\tau(a)< D_\tau(b)$, for each non-zero $\mathfrak A$-trace $\tau$: if $a\preceq b$ then for any $0<\varepsilon<1$, there is $\delta>0$ with $(a-\varepsilon)_+\preceq(b-\delta)_+$ \cite[14.1.9]{s}. Without loss of generality, we may assume that $a$ is a projection, thus,  $(a-\varepsilon)_+=\lambda a$, for some $\lambda>0$. Therefore,
\begin{align*}
D_\tau(a)&=\sup_n \tau(a^{\frac{1}{n}})=\sup_n \tau((\lambda a)^{\frac{1}{n}})=D_\tau(\lambda a)\\&=D_\tau\big((a-\varepsilon)_+\big)\leq D_\tau\big((b-\delta)_+\big)\\&<D_\tau\big((b-\delta/2)_+\big)\leq D_\tau(b),	
\end{align*}
where the strict inequality is proved by an argument as in  $(i)$ above. The converse implication is immediate.
\end{remark}

\begin{lemma} \label{asc}
Consider the following properties:

$(i)$ $A$ has SC,

$(ii)$ $A$ has $\mathfrak A$-SC.

\noindent Then $(i)\Rightarrow (ii)$ when $\mathfrak A$ is unital. The converse also holds when  $\mathfrak A$ is an AW$^*$-algebra.
\end{lemma}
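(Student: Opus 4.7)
The plan is to bridge the two notions via the elementary commutation
\[
f(D_\tau(x)) \;=\; d_{f\circ\tau}(x), \qquad x\in A_+,\ \tau\in\mathcal T_{\mathfrak A}(A),\ f\in\mathcal S(\mathfrak A),
\]
obtained by sliding the state $f$ through the monotone norm-limit $D_\tau(x)=\lim_n \tau(x^{1/n})$. The interchange is justified by the preceding lemma on norm-versus-order limits in a monotone complete $\mathfrak A$, and in particular whenever $\mathfrak A$ is an AW*-algebra; note also that $f\circ\tau$ is a bona fide scalar trace on $A$, so this identity is the dictionary between the scalar and $\mathfrak A$-valued dimension functions.

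For the converse direction $(ii)\Rightarrow(i)$ under the AW* hypothesis on $\mathfrak A$ I would take $a,b\in\mathbb M_n(A)_+$ satisfying $d_\sigma(a)<d_\sigma(b)$ for every scalar trace $\sigma$ on $A$, fix $\tau\in\mathcal T_{\mathfrak A}(A)$, and compute for each $f\in\mathcal S(\mathfrak A)$
\[
f\bigl(D_\tau(b)-D_\tau(a)\bigr) \;=\; d_{f\circ\tau}(b)-d_{f\circ\tau}(a) \;>\; 0,
\]
using that $f\circ\tau$ is a scalar trace. Letting $f$ range over $\mathcal S(\mathfrak A)$ yields $D_\tau(b)-D_\tau(a)>0$ strictly in $\mathfrak A$, and the $\mathfrak A$-SC of $A$ then forces $a\preceq b$; hence $A$ has SC. The AW* hypothesis is used here solely to make the norm-limit defining $D_\tau$ and the identity above genuinely take values in $\mathfrak A$.

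For $(i)\Rightarrow(ii)$ under unital $\mathfrak A$ I would begin with $a,b\in\mathbb M_n(A)_+$ satisfying $D_\tau(b)-D_\tau(a)>0$ in $\mathfrak A$ for every $\tau\in\mathcal T_{\mathfrak A}(A)$ and try to invoke the scalar SC of $A$, which requires $d_\sigma(a)<d_\sigma(b)$ for every scalar trace $\sigma$ on $A$. The strategy is to realize each such $\sigma$ as $f\circ\tau$ for some $\tau\in\mathcal T_{\mathfrak A}(A)$ and $f\in\mathcal S(\mathfrak A)$, so that the displayed identity transfers the strict positivity from the $\mathfrak A$-side back to the required scalar inequality. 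Unitality of $\mathfrak A$ gives the natural candidate $\tau_\sigma(x):=\sigma(x)\cdot 1_{\mathfrak A}$ paired with any state $f$ with $f(1_{\mathfrak A})=1$, and for this pair $f\circ\tau_\sigma=\sigma$.

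The main obstacle in this last step is that $\tau_\sigma$ is generally not a right $\mathfrak A$-module map, hence a priori not a member of $\mathcal T_{\mathfrak A}(A)$. I would repair this by composing $\tau_\sigma$ with an $\mathfrak A$-valued conditional expectation $E:M(A)\twoheadrightarrow\mathfrak A$ (present because $\mathfrak A\subseteq M(A)$ is a unital C*-subalgebra under the non-degenerate action), or equivalently by averaging $\tau_\sigma$ over the $\mathfrak A$-action, thereby producing a genuine $\tilde\tau_\sigma\in\mathcal T_{\mathfrak A}(A)$ and a state $f$ recovering $\sigma$ as $f\circ\tilde\tau_\sigma$. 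Once this module-compatible lifting is in place, the commutation identity gives $d_\sigma(a)=f(D_{\tilde\tau_\sigma}(a))<f(D_{\tilde\tau_\sigma}(b))=d_\sigma(b)$, and the scalar SC of $A$ yields $a\preceq b$, completing the proof.
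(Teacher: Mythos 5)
Your converse direction ($\mathfrak A$-SC $\Rightarrow$ SC under the AW* hypothesis) is correct and is essentially the paper's argument: compose an $\mathfrak A$-trace $\tau$ with states $f$, note $f\circ\tau\in\mathcal T(A)$ and $f(D_\tau(\cdot))=d_{f\circ\tau}(\cdot)$ (the interchange being legitimate because $D_\tau$ is a norm limit), and conclude that $D_\tau(b)-D_\tau(a)$ is strictly positive, so $\mathfrak A$-SC applies.

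The forward direction, however, has a genuine gap at the ``repair'' step. You correctly identify the natural candidate $\tau_\sigma=\sigma(\cdot)1_{\mathfrak A}$ (this is exactly what the paper uses: it sets $\tilde\tau_0(a):=\tau_0(a)1_{\mathfrak A}$, computes $D_{\tilde\tau_0}=D_{\tau_0}1_{\mathfrak A}$, and plugs $\tilde\tau_0$ directly into the $\mathfrak A$-SC hypothesis), and your observation that $\tau_\sigma$ need not be an $\mathfrak A$-module map is a real subtlety which the paper's proof does not address. But the fix you propose does not close it. First, a conditional expectation $E:M(A)\twoheadrightarrow\mathfrak A$ is not ``present'' merely because $\mathfrak A$ sits as a unital C*-subalgebra of $M(A)$; existence of such expectations is a strong extra hypothesis (nothing in the standing assumptions, nor unitality of $\mathfrak A$, provides one), and ``averaging over the $\mathfrak A$-action'' is not a defined operation here. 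Second, even granting such an $E$, the construction does not do what you need: $\tau_\sigma$ already takes values in $\mathbb C 1_{\mathfrak A}\subseteq\mathfrak A$, so post-composing with $E$ returns $\tau_\sigma$ unchanged and cannot restore the module property; and you would still have to verify that the modified map is tracial, satisfies the retraction conditions in Definition \ref{ret}, and recovers $\sigma$ exactly as $f\circ\tilde\tau_\sigma$ for some state $f$ --- none of which is argued. So as written, the step producing, for each scalar trace $\sigma$, a genuine element of $\mathcal T_{\mathfrak A}(A)$ through which $\sigma$ factors is unsupported; either you must justify that step by an actual construction (or added hypotheses guaranteeing an expectation), or follow the paper and argue directly with $\tilde\tau_0=\tau_0(\cdot)1_{\mathfrak A}$, in which case the module-map issue you raised should be confronted rather than bypassed.
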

\begin{proof}
 $(i)\Rightarrow (ii)$. Given $\tau_0\in\mathcal T(A)$, let $\tilde\tau_0(a):=\tau_0(a)1_\mathfrak A$, then 
 $$D_{\tilde\tau_0}(a)=\sup_{\varepsilon>0}\tilde\tau_0(f_\varepsilon(|a|))=\sup_{\varepsilon>0}\tau_0(f_\varepsilon(|a|))1_\mathfrak A=D_{\tau_0}(a)1_\mathfrak A.$$
Given $a,b\in \mathbb M_n(A_+)$, if $D_\tau(b)-D_\tau(a)>0$, for each $\tau\in\mathcal T_\mathfrak A(A)$, then applying this to $\tau:=\tilde\tau_0$, we get, $D_{\tau_0}(b)-D_{\tau_0}(a)>0$, for each $\tau_0\in\mathcal T(A)$, which implies $a\preceq b$, by assumption.

 $(ii)\Rightarrow (i)$. Given  $\tau\in\mathcal T_\mathfrak A(A)$ and $g\in \mathcal S(\mathfrak A)$, let $\tau_g(a):=g(\tau(a)).$ Then, 
  $$D_{\tau_g}(a)=\sup_{\varepsilon>0}g(\tau(f_\varepsilon(|a|)))=g(\sup_{\varepsilon>0}\tau(f_\varepsilon(|a|))=g\circ D_{\tau}(a).$$ 
  Given $a,b\in \mathbb M_n(A_+)$, if $D_{\tau_0}(b)-D_{\tau_0}(a)>0$, for each $\tau_0\in\mathcal T(A)$, then applying this to $\tau_0:=g\circ \tau$, we get, $g(D_{\tau}(b)-D_{\tau}(a))>0$, for each $g\in\mathcal S(\mathfrak A)$, which means that $D_{\tau}(b)-D_{\tau}(a)$ is strictly positive in $\mathfrak A$, which in turn  implies $a\preceq b$, by assumption.   
\end{proof}

Next, let us explore the relation to Cuntz semigroup. Let $\mathbb M_\infty(A)$ be the union of matrix algebras $\mathbb M_n(A)$ for $n\geq 1$. For $a\in \mathbb M_n(A), b\in \mathbb M_m(A)$, let $a\oplus b\in\mathbb M_{m+n}(A)$ be the diagonal matrix with diagonals $a$ and $b$. Given $a,b\in\mathbb M_\infty(A)_+$, we may assume that $a,b\in\mathbb M_n(A)_+$, for some $n$ (simply by adding a zero block to the one with smaller size). We could then write $a\preceq b$ if this happens in $\mathbb M_n(A)$ (and this is clearly independent of the choice of $n$). We write $[a]$ to denote the Cuntz equivalence class of $a$ and equip $W(A):=\mathbb M_\infty(A)_+/\sim$ with $[a]+[b]:=[a\oplus b]$, and call it the Cuntz semigroup of $A$. Similarly, $Cu(A):=W(A\otimes \mathbb K)$ is called the stablised Cuntz semigroup of $A$. 

\begin{proposition} \label{state}
	Let $A$ and $\mathfrak A$ be unital. For each state $f\in \mathcal S(\mathfrak A)$ and each $\mathfrak A$-rank function $D$ on $A$, 
	$$d: W(A)\to [0,\infty);\ \  d([a]):=f(D(a)),\ \ \ (a\in A_+),$$
	is a state of the Cuntz semigroup. Moreover, these exhaust the state space of $W(A)$. 
	
	If moreover $\mathfrak A$ is an AW*-algebra (and $A$ is exact), then every state on $W(A)$ is of the form $f\circ D_\tau$, for some state $f$ on $\mathfrak A$ and some $\mathfrak A$-quasitrace ($\mathfrak A$-trace) $\tau$ on $A$.
\end{proposition}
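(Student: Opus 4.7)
The plan is to verify the two directions of the correspondence separately, then invoke the earlier structural theorems for the AW*-algebra refinement. For the forward direction, given $f \in \mathcal{S}(\mathfrak A)$ and $D \in \mathcal{RF}_\mathfrak A(A)$, I would check the four axioms of a (lower semicontinuous) Cuntz-semigroup state for $d([a]) := f(D(a))$. Well-definedness on Cuntz classes is immediate from axiom (5) of a rank function: if $a \sim b$ then $D(a)\leq D(b)\leq D(a)$, hence $f(D(a)) = f(D(b))$. To evaluate on $a \in \mathbb M_n(A)_+$ I would implicitly pass through Theorem~\ref{main}, so that $D$ inherits a matrix extension from its associated quasitrace (which extends by axiom (4) of the quasitrace definition). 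Additivity $d([a\oplus b]) = d([a]) + d([b])$ then reduces to $D(a\oplus b) = D(a\oplus 0) + D(0\oplus b)$, which is axiom (2) applied to orthogonal summands in $\mathbb M_{n+m}(A)$. Monotonicity follows from axiom (5), normalization $d([1_A]) = 1$ from axiom (1) together with $f(1_\mathfrak A)=1$, and lower semicontinuity of $d$ from axiom (7) of $D$ combined with continuity of $f$.

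For the converse (exhaustion), given any state $d$ on $W(A)$, I would make the almost tautological choice
$$D(a) := d([a])\cdot 1_\mathfrak A \qquad (a \in \mathbb M_\infty(A)_+),$$
and pick any $f \in \mathcal{S}(\mathfrak A)$ with $f(1_\mathfrak A) = 1$; then $f(D(a)) = d([a])$ by construction. Each axiom in the definition of $\mathcal{RF}_\mathfrak A(A)$ transfers directly from the corresponding property of the Cuntz-semigroup state $d$: orthogonal additivity uses that $a+b \sim a\oplus b$ when $a\perp b$; conditions (3)--(5) are built into $d$ via Cuntz equivalence and the order on $W(A)$; subadditivity (6) uses $a+b \preceq a\oplus b$ in $\mathbb M_2(A)$; and lower semicontinuity of $D$ reduces to that of $d$ (which is part of the standing definition of a state on $W(A)$).

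For the final assertion, once the $\mathfrak A$-rank function $D$ is in hand, Theorem~\ref{main} produces an $\mathfrak A$-quasitrace $\tau \in \mathcal{QT}_\mathfrak A(A)$ with $D = D_\tau = \sup_{\varepsilon>0}\tau(f_\varepsilon(|\cdot|))$, so that $d = f\circ D_\tau$. When $A$ is moreover exact, Lemma~\ref{qt} forces $\tau$ to be linear, hence $\tau \in \mathcal{T}_\mathfrak A(A)$. The main subtlety I foresee is the bookkeeping around the extension of $D$ to matrix algebras in the forward direction, since the axiomatization in the text presents $D$ on $A$ alone; this is handled cleanly by viewing $D$ through its associated quasitrace, which is automatically defined on all $\mathbb M_n(A)$ via axiom (4). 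Everything else is straightforward verification of axioms.
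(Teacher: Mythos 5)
Your proposal is correct and follows essentially the same route as the paper: the forward direction is a direct check of the rank-function axioms (normalization from $\sup D=1$, additivity from orthogonal additivity of $D$ on $a\oplus b$), the converse uses the scalar-valued rank function $D(a):=d([a^*a])1_\mathfrak A$ (your $d([a])1_\mathfrak A$ on positives is the same construction), and the final assertion is obtained from Theorem \ref{main} together with Lemma \ref{qt}. Your remark on extending $D$ to matrix algebras via the associated quasitrace merely makes explicit a bookkeeping point the paper leaves implicit.
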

\begin{proof}
	Given   $f\in \mathcal S(\mathfrak A)$ and $D\in \mathcal{RF}_\mathfrak A(A)$, let $d([a]):=f(D(a)),$ for $a\in A_+$, then $d([1_A])=f(D(1_A))=f(1_\mathfrak A)=1,$ and $$d([a]+[b])=f(D(a\oplus b))=f(D(a)+D(b))=d([a])+d([b]),$$ thus, $d$ is a state of the scaled semigroup $W(A)$. Conversely, given a state $d$ of $W(A)$, let $D(a):=d([a^*a])1_\mathfrak A,$ then $D$ satisfies conditions of an $\mathfrak A$-rank function by properties of Cuntz-subequivalence; c.f., \cite[14.1.5]{s}), and for each state $f\in \mathcal S(\mathfrak A)$, and $a\in A_+$, 
	$$f(D(a))=d([a^*a])f(1_\mathfrak A)=d([a^2])=d([a]).$$
	The last statement follows from Theorem \ref{main} and Lemma \ref{qt}.    
\end{proof}

Recall that an additive partially ordered monoid $S$ is {\it weakly unperforated} if $nx\neq 0$ for some $n$ implies $x\neq 0$, for each $x\in S$, and {\it almost unperforated} if $nx\leq my$ for some $n>m$ implies $x\leq y$, for each $x,y\in S$. An (abelian) ordered group $(G, G_+)$ is weakly unperforated if its positive cone $G_+$ is weakly unperforated. The adjective ``weakly'' suggests that we  have also a stronger version. Indeed,  an (abelian) ordered group $(G, G_+)$ is called {\it unperforated} if $nx\geq 0$ for some $n$ implies $x\geq 0$, for each $x\in G$.
An unperforated group must be torsion-free, while a weakly unperforated group can 
have torsion, $\mathbb Z \oplus\mathbb Z_2$, with strict ordering from the first coordinate, being an example. 
 
An element $u\in G_+$ is called an {\it order unit} if for any $x\in G$ there is $n > 0$ with $x < nu$. Also, $G$ is called {\it simple} if every nonzero positive element is an order unit.  A  simple weakly unperforated scaled ordered 
group $(G,G_+,u)$ is known to have strict ordering from its states, i.e., 
$x\in G_+\backslash\{0\}$ iff $d(x) > 0 $ for state $d\in\mathcal S(G)$ \cite[Theorem 6.8.5]{bla}. 
 
\begin{corollary} \label{wup}
Let $\mathfrak A$ be an AW*-algebra and $A$ be exact and unital. If $W(A)$ is weakly unperforated, then $A$ has $\mathfrak A$-SC. The same holds when $W(A)$ is almost unperforated, if  $A$ is moreover simple.
\end{corollary}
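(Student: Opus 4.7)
The strategy is to translate the hypothesis through Proposition~\ref{state}, which identifies the state space of $W(A)$ with the composites $f\circ D_\tau$ for $f\in\mathcal S(\mathfrak A)$ and $\tau\in\mathcal T_\mathfrak A(A)$, and then to apply the Rørdam-type theorem relating (almost/weakly) unperforation of $W(A)$ to strict comparison on its elements.

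Fix $a,b\in\mathbb M_n(A)_+$ and assume $D_\tau(b)-D_\tau(a)>0$ in $\mathfrak A$ for every $\tau\in\mathcal T_\mathfrak A(A)$. By the paper's definition of strict positivity, this is equivalent to
$$f\bigl(D_\tau(b)\bigr) > f\bigl(D_\tau(a)\bigr),\qquad f\in\mathcal S(\mathfrak A),\ \tau\in\mathcal T_\mathfrak A(A).$$
Since $\mathfrak A$ is an AW*-algebra and $A$ is exact, the last assertion of Proposition~\ref{state} asserts that every state on the Cuntz semigroup $W(A)$ has the form $[x]\mapsto f(D_\tau(x))$ for some such $f,\tau$. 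Consequently the hypothesis translates into the purely Cuntz-semigroup statement
$$d([a]) < d([b])\qquad\text{for every state } d \text{ of } W(A),$$
so the desired conclusion $a\preceq b$ now amounts to ``strict comparison'' for $W(A)$ on the pair $([a],[b])$.

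For the almost unperforated simple case, I would invoke R{\o}rdam's theorem: when $W(A)$ is almost unperforated and $d([a])<d([b])$ for every state $d$ of $W(A)$, we get $[a]\le[b]$ in $W(A)$, i.e.\ $a\preceq b$. Simplicity is used precisely to handle the pathological cases where one (but not both) of $a,b$ is Cuntz-equivalent to a projection, which are dispatched by the $(a-\varepsilon)_+$ cut-down trick exactly as carried out in Remark~\ref{sc}. For the weakly unperforated case I would instead pass to the enveloping (Grothendieck) ordered group of $W(A)$ and use the cited result (Theorem~6.8.5 in Blackadar's monograph): in a weakly unperforated scaled ordered group, strict positivity on all states forces membership in the strict positive cone. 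Pulling this back along the Grothendieck comparison map yields $[a]\leq[b]$ in $W(A)$, hence $a\preceq b$.

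I expect the delicate step to be the translation between strict inequality on states and the actual semigroup order on $W(A)$. Both versions -- almost unperforated (simple) and weakly unperforated -- convert strict state inequality into a $\leq$-relation in $W(A)$, but this conversion must respect the Cuntz-equivalence-with-projection dichotomy highlighted in Remark~\ref{sc}, and in the weakly unperforated case also the injectivity of the canonical map from $W(A)$ into its Grothendieck group on the relevant pair. These are the places where the hypotheses on $A$ (exactness, and in the second part simplicity) enter essentially, and they are handled by the functional-calculus cut-down arguments already rehearsed in Remark~\ref{sc}.
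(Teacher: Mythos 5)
Your proposal follows essentially the same route as the paper: both translate the hypothesis via Proposition~\ref{state} into the strict inequality $d([a])<d([b])$ for all states $d$ of $W(A)$, then invoke Blackadar's Theorem~6.8.5 on the Grothendieck group in the weakly unperforated case and R{\o}rdam's result (\cite[Proposition 3.2]{ro}, applicable since simplicity of $A$ makes $W(A)$ simple) in the almost unperforated simple case to conclude $[a]\leq[b]$, i.e.\ $a\preceq b$. The only cosmetic difference is your extra caution about the projection/purely-positive dichotomy of Remark~\ref{sc}, which the paper does not need in this proof since the cited results already deliver the order relation directly.
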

\begin{proof}
Assume that $a,b\in \mathbb M_\infty(A)_+$ and $D_\tau(b)-D_\tau(a)$ is an strictly positive element in $\mathfrak A$. Then $f(D_\tau(a))<f(D_\tau(b))$, for each $f\in \mathcal S(\mathfrak A)$. It follows from Proposition \ref{state} that $d([a])<d([b])$, for each $d\in\mathcal S(W(A))$. 

If $W(A)$ is weakly unperforated  then the Grothendieck group of $W(A)$ has strict ordering from its states by \cite[Theorem 6.8.5]{bla}, thus, $[a]\leq [b],$ i.e., $a\preceq b$, as required. If $A$ is simple then so is its Cuntz semigroup $W(A)$. In this case, if $W(A)$ is weakly unperforated then $W(A)$ has strict ordering from its states by \cite[Proposition 3.2]{ro}, therefore again we have $[a]\leq [b],$ i.e., $a\preceq b$.  
\end{proof}

When $A$ is a unital exact C*-algebra, by Lemma \ref{qt}, $\mathcal{QT}_\mathfrak A(A)=\mathcal{T}_\mathfrak A(A)$, which is  convex, and is point-ultra-strong compact set, if moreover $\mathfrak A$ is a von Neumann algebra \cite[1.3.7]{bo}.

Let $\Delta$ be a (compact) convex subset of a topological vector space, and let 
${\rm LAff}_\mathfrak A(\Delta)_+$ denote the set of all l.s.c., affine maps $f: \Delta\to \mathfrak A_+$. Here being lower semicontinuous (l.s.c.) means that whenever $t_n\to t$ in $\Delta$, for each $\varepsilon>0$, $(1-\varepsilon)f(a)\leq f(a_n)$, for large $n$. The set of continuous elements in ${\rm LAff}_\mathfrak A(\Delta)_+$ is denoted by ${\rm Aff}_\mathfrak A(\Delta)_+$. We say that $f$ is bounded, if there is $\alpha\in\mathfrak A_+$ with $f(t)\leq \alpha$, for each $t\in\Delta$, and denote bounded part of ${\rm LAff}_\mathfrak A(\Delta)_+$ by ${\rm LAff}^b_\mathfrak A(\Delta)_+$. Finally,  an element $f$ is said to be strictly positive if $f(t)$ is strictly positive in $\mathfrak A$, for each $t$ (when $\mathfrak A$ is unital, the constant function 1 being an example. When $\mathfrak A$ is $\sigma$-unital with a strictly positive element $h$, the constant function with constant value $h$ is an example). We denote the strict positive parts of the above defined sets by ${\rm LAff}_\mathfrak A(\Delta)_{++}$ and ${\rm LAff}^b_\mathfrak A(\Delta)_{++}$, respectively. The corresponding sets are also defined for Aff instead of LAff.  
					
For a projection $p\in \mathbb M_\infty(A)$, consider the Murray-von Neumann equivalence class $\langle p\rangle$ of $p$ (which is not necessarily the same as its Cuntz equivalence class $[p]$). Then $\langle p\rangle\mapsto [p]$ is a well-defined semigroup morphism from the semigroup $V(A)$ generating $K_0(A)$ (as it Grothendieck group) to the Cuntz semigroup, which is also injective, when $A$ is stably finite (in the sense of Kaplansky), and $W(A)=V(A)\sqcup W(A)_+$, where the last set consists of Cuntz classes of purely positive elements (i.e., those not Cuntz equivalebnt to any projection) \cite[14.2.8]{s}.

Define $\hat p\in  {\rm LAff}^b_\mathfrak A(\mathcal{T}_\mathfrak A(A))_{++}$ by $\hat p(\tau)=\tau(p)$, for $\tau\in\mathcal{T}_\mathfrak A(A)$. 

\begin{definition}
The semigroup $W_\mathfrak A(A):=V(A)\sqcup {\rm LAff}^b_\mathfrak A(\mathcal{T}_\mathfrak A(A))_{++}$, where  $V(A)$ is the copy of the semigroup of Cuntz equivalence classes of projections under the map $[p]\mapsto \hat p$, is called the $\mathfrak A$-Cuntz semigroup of $A$.
\end{definition} 
			
Note a slight abuse of notation here: when $\mathfrak A=\mathbb C$, we have $W_\mathbb C(A)=\tilde W(A)$ which is isomorphic to $W(A)$  for simple, separable, unital, nuclear C*-algebras with almost unperforated and weakly divisible Cuntz semigroup \cite[14.3.38]{s}.

Define, 

$$\iota: W(A)_+\to {\rm LAff}^b_\mathfrak A(\mathcal{T}_\mathfrak A(A))_{++}; \ \ \  [a]\mapsto (\tau\mapsto D_\tau(a)), \ \ (a\in A_+).$$

This is well-defined, since $D_\tau$ is constant on Cuntz classes.  

\begin{lemma}
Let $A$ be unital and separable with at least one non-zero $\mathfrak A$-trace. 

$(i)$ $\iota$ is a semigroup morphism, if $A$ is simple with stable rank one, 

$(ii)$ $\iota$ is an order embedding, if moreover $A$ has $\mathfrak A$-SC. If $A$ is also exact, $\iota$ induces an order embedding $\tilde\iota: W(A)\to W_\mathfrak A(A)$, with $\tilde\iota={\rm id}$ on $V(A)$ and $\tilde\iota=\iota$ on $W(A)_+$.   
\end{lemma}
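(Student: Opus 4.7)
My plan is as follows. For part \emph{(i)}, first I would verify that $\iota$ lands in ${\rm LAff}^b_\mathfrak A(\mathcal T_\mathfrak A(A))_{++}$: well-definedness on Cuntz classes is immediate from property~(5) of an $\mathfrak A$-rank function applied to $D_\tau$; affinity of $\tau\mapsto D_\tau(a)$ follows because each $\tau\mapsto\tau(f_\varepsilon(|a|))$ is affine and the family is monotone in $\varepsilon$, so the pointwise supremum remains affine; boundedness by $\lceil\|a\|\rceil\cdot 1_\mathfrak A$ is clear; lower semicontinuity is inherited from writing $D_\tau(a)$ as a supremum of point-ultra-strong continuous affine maps; and strict positivity uses simplicity of $A$ via the observation that for every state $f\in\mathcal S(\mathfrak A)$ the composition $f\circ\tau$ is a non-zero, hence faithful, tracial state on $A$, whence $f(D_\tau(a))>0$ for $a\in A_+\setminus\{0\}$. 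The semigroup-morphism property then reduces to $D_\tau(a\oplus b)=D_\tau(a)+D_\tau(b)$, which holds because $a$ and $b$ are orthogonal in $\mathbb M_2(A)$ and $f_\varepsilon$ preserves the block decomposition. Stable rank one enters only to ensure that $W(A)_+$ is closed under $\oplus$, so that the semigroup structure restricts to the domain of $\iota$.

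For part \emph{(ii)}, the monotonicity of $\iota$ is again property~(5). The substantive content is the converse: given purely positive $a,b$ with $D_\tau(a)\le D_\tau(b)$ for every $\tau$, I will adapt the argument of Remark~\ref{sc}$(i)$. Since $a$ is not Cuntz equivalent to any projection, $0$ is non-isolated in $\sigma(a)$; choosing a strictly decreasing null-sequence $\varepsilon_n\in\sigma(a)\setminus\{0\}$, one obtains, using simplicity of $A$ and the positivity of the measure $\mu_{D_\tau,f}$ on each interval $(\varepsilon_{n+1},\varepsilon_n]$, the strict bound
\[
D_\tau\big((a-\varepsilon_n)_+\big)<D_\tau\big((a-\varepsilon_{n+1})_+\big)\le D_\tau(a)\le D_\tau(b).
\]
Strict $\mathfrak A$-SC then yields $(a-\varepsilon_n)_+\preceq b$ for every $n$, and passage to the norm limit gives $a\preceq b$.

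For the extension $\tilde\iota$ I would define the map piecewise, as the identity on $V(A)$ and as $\iota$ on $W(A)_+$. Order preservation within each piece is either trivial or covered above, so only the cross-cases between a projection class $[p]\in V(A)$ and a purely positive class $[a]\in W(A)_+$ remain. These are precisely the situation handled by Remark~\ref{sc}$(ii)$, which — under the additional hypothesis that $A$ is exact — equates $p\preceq a$ to the strict inequality $D_\tau(p)<D_\tau(a)$ for every $\tau$; a symmetric argument treats $a\preceq p$.

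The main obstacle is the mismatch between the non-strict inequality $D_\tau(a)\le D_\tau(b)$ available in the definition of order embedding and the strict positivity hypothesis demanded by $\mathfrak A$-SC. The remedy, and the only real ingredient beyond the verifications above, is the approximation scheme $a=\lim_n(a-\varepsilon_n)_+$ available for purely positive $a$, combined with the genuine strict gap between the dimension functions of consecutive approximants; this gap is manufactured from simplicity of $A$ (and, in the cross-case of a projection class against a purely positive one, from exactness of $A$ as well), as encoded in Remarks~\ref{sc}$(i)$ and $(ii)$.
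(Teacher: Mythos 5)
Your proposal is correct and follows essentially the same route as the paper: part (i) rests on the additivity $D_\tau(a\oplus b)=D_\tau(a)+D_\tau(b)$, stable rank one to make $W(A)_+$ a subsemigroup, and simplicity (faithfulness of $f\circ\tau$, hence of $f\circ D_\tau$) for strict positivity of $\iota([a])$; part (ii) is exactly the paper's appeal to Remark \ref{sc}$(i)$ for the order embedding on purely positive classes and to Remark \ref{sc}$(ii)$ for the mixed projection/purely-positive comparisons defining $\tilde\iota$. The only difference is presentational: you re-derive the $(a-\varepsilon_n)_+$ approximation argument and spell out the well-definedness checks (affinity, boundedness, lower semicontinuity) that the paper leaves implicit, which does not change the substance of the proof.
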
  			
\begin{proof}
$(i)$ Under the assumption, $W(A)_+$ is a subsemigroup of $W(A)$ \cite[14.2.13]{s}. Given $a\in A_+$, $f\in \mathcal S(\mathfrak A)$, and $\tau\in\mathcal{T}_\mathfrak A(A)$, if $f(\iota([a])(\tau))=f(D_\tau(a))=0$, then $a=0$, as $f\circ D_\tau$ is faithful by simplicity of $A$, but this could not be the case as $a$ is purely positive, thus $\iota([a])(\tau)>0$ in $\mathfrak A$, i.e., $\iota$ ranges in ${\rm LAff}^b_\mathfrak A(\mathcal{T}_\mathfrak A(A))_{++}$. The fact that $\iota$ is a semigroup morphism  follows from the fact that $D_\tau(a\oplus b)=D_\tau(a)+D_\tau(b),$ for $a,b\in A_+$.

$(ii)$ If $[a]\leq [b]$, then $a\preceq b$, which is equivalent to $D_\tau(a)\leq D_\tau(b)$ by Remark \ref{sc}$(i)$. The last assertion now follows from Remark \ref{sc}$(ii)$.
\end{proof}

\begin{proposition}
	Let $A$ be unital and a $*$-module. Let $Z$ be the center of the maximal finite summand $M$ of $A^{**}$, then there is an affine isometric isomorphism from $\mathcal T_\mathfrak A(A)$ onto $CP_\mathfrak A(Z, \mathfrak A^{**})$, consisting of linear, normal, c.p. module maps from $Z$ into $\mathfrak A^{**}$. When $\mathfrak A$ is a von Neumann algebra, this could be arranged onto $CP_\mathfrak A(Z, \mathfrak A)$.
\end{proposition}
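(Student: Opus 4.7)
The plan is to adapt the classical Dixmier correspondence between tracial states on a unital C$^*$-algebra and normal states on the center of the maximal finite summand of the bidual to the $\mathfrak{A}$-valued setting. Since $A$ is a $*$-module, by the discussion in Section~\ref{1} the action forces $\mathfrak{A}$ to be commutative and embedded in $Z(M(A)) \subseteq Z(A^{**})$; in particular $\mathfrak{A}^{**}$ is an abelian von Neumann algebra and the $\mathfrak{A}$-action extends from $A$ to $A^{**}$ via the multiplier inclusion.

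Given $\tau \in \mathcal{T}_{\mathfrak{A}}(A)$, I first form the bidual $\tilde{\tau} := \tau^{**} : A^{**} \to \mathfrak{A}^{**}$, which is automatically normal and completely positive. I then verify that $\tilde{\tau}$ inherits the tracial property and the $\mathfrak{A}$-module property from $\tau$ by exploiting weak$^*$-density of $A$ in $A^{**}$ together with separate weak$^*$-continuity of multiplication in $A^{**}$ and of the $\mathfrak{A}$-action. Next, I decompose $A^{**} = M \oplus N$ with $M$ the maximal finite summand and $N$ the properly infinite summand, and show that $\tilde{\tau}$ vanishes on $N$: for a properly infinite projection $p \in N$, a halving $p = p_1 + p_2$ with $p_1 \sim p_2 \sim p$ yields $\tilde{\tau}(p_i) = \tilde{\tau}(p)$, so $\tilde{\tau}(p) = 2\tilde{\tau}(p)$ in $\mathfrak{A}^{**}_{+}$, forcing $\tilde{\tau}(p) = 0$, and normality propagates this to all of $N$.

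The finite von Neumann algebra $M$ carries a unique faithful normal center-valued trace $\mathrm{tr}_M : M \to Z$ with the universal property that any normal tracial completely positive map from $M$ into an abelian von Neumann algebra factors uniquely through $\mathrm{tr}_M$. Applying this to $\tilde{\tau}|_M$, I obtain a unique normal c.p.\ map $\phi : Z \to \mathfrak{A}^{**}$ with $\tilde{\tau}|_M = \phi \circ \mathrm{tr}_M$, and the $\mathfrak{A}$-module property transfers because $\mathfrak{A}$ commutes with $M$ and $z_M \mathfrak{A}$ sits inside $Z$, where $z_M$ is the central projection onto $M$. Conversely, given normal $\phi \in CP_{\mathfrak{A}}(Z, \mathfrak{A}^{**})$, I define $\tau_{\phi}(a) := \phi(\mathrm{tr}_M(z_M \hat{a}))$ for $a \in A$; this is an $\mathfrak{A}$-trace, and the two assignments are mutually inverse and affine. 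Isometry follows from the fact that both $\tau$ and $\phi$ (being c.p. into unital targets on unital domains, once we pass through $z_M$) attain their norms at the identity.

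For the final statement, when $\mathfrak{A}$ is a von Neumann algebra, there is a canonical normal $*$-homomorphism $\pi : \mathfrak{A}^{**} \twoheadrightarrow \mathfrak{A}$ splitting the inclusion $\mathfrak{A} \hookrightarrow \mathfrak{A}^{**}$; since $\tau(A) \subseteq \mathfrak{A}$ one has $\pi \circ \tilde{\tau}|_A = \tau$, and the factorization yields $\pi \circ \phi \in CP_{\mathfrak{A}}(Z, \mathfrak{A})$, recovering the same $\tau$ by the uniqueness above. The main obstacle I foresee is verifying the $\mathfrak{A}$-module compatibility of the center-valued factorization, since one must carefully track how $\mathfrak{A} \subseteq Z(A^{**})$ interacts with the decomposition $A^{**} = M \oplus N$ and how $z_M\mathfrak{A}$ embeds in $Z$ so that $\mathrm{tr}_M$ is $\mathfrak{A}$-linear on the relevant subalgebra; the remaining steps are largely routine once this module compatibility is in place.
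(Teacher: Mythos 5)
Your proposal is correct and follows essentially the same route as the paper's proof: extend $\tau$ normally to $A^{**}$, show the extension vanishes on the properly infinite summand, factor its restriction to $M$ through the center-valued trace $\mathrm{tr}_M$ (so that $\phi=\tau^{**}|_Z$), invert via $\tau_\phi=\phi\circ\mathrm{tr}_M\circ q$, and pass to a normal map into $\mathfrak A$ itself (the paper uses the predual of $\mathfrak A$, you use the canonical normal surjection $\mathfrak A^{**}\to\mathfrak A$) in the von Neumann algebra case. One small caveat: the $*$-module hypothesis alone does not force $\mathfrak A$ to be commutative and centrally embedded (the discussion in Section \ref{1} also requires non-degeneracy and an extra compatibility condition), but nothing essential in your argument depends on this, since the facts you actually need ($M$, $N$, $Z$ are $\mathfrak A$-submodules and $1_M\cdot\alpha\in Z$) follow directly from the $*$-module identity, exactly as in the paper's proof.
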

\begin{proof}
	Let $A^{**}=M\oplus N$, with $N$ infinite. The units $1_M$ and $1_N$ are orthogonal projections in $A^{**}$, with $1_M+1_N=1_{A^{**}}:=1$, which are finite and purely infinite, respectively (by maximality of $M$). Given a projection $\alpha\in \mathfrak A$, 
	$$(1\cdot\alpha)^2=(1\cdot\alpha)(1\cdot \alpha)=((1\cdot\alpha)1)\cdot\alpha=1.\cdot\alpha^2=1\cdot\alpha,$$ 
	and $(1.\cdot\alpha)^*=1\cdot\alpha^*=1\cdot\alpha,$
	that is, $1\cdot\alpha$ is a projection in $A^{**}$. Multiplying both sides of
	$1\cdot \alpha=1_M\cdot\alpha+1_N\cdot\alpha$ from left by $1_N$, we get $1_N(1_M\cdot\alpha)=0$, that is, $1_M\cdot\alpha\in M$. Thus $$y\cdot\alpha=(y1_M\cdot\alpha)=y(1_M\cdot\alpha)\in M, \ \ (y\in M),$$
	for each projection $\alpha$ (and so, for each $\alpha\in \mathfrak A$, as $\mathfrak A$ is generated in norm by its set of projections), i.e., $M$ is a submodule of $A^{**}$. The same then holds for $N$. Let $Z:=Z(M)$ be the center of $M$, then for selfadjoint elements $z\in Z$, $y\in M$ and $\alpha\in \mathfrak A$, we have,
	$$(z\cdot\alpha)y=\big(y^*(z\cdot\alpha)^*\big)^*=\big(y(z^*\cdot\alpha^*\big)^*=(z\cdot\alpha)y,$$
	thus, $Z$ is a submodule of $M$ (as all these algebras are generated by their selfadjoint elements).
	
	 Let ${\rm tr}_M: M\to Z(M)=:Z$ be the canonical center-valued trace on $M$ \cite[Theorem 2.6]{t}. Then $\phi\circ {\rm tr}_M=\phi$, for each finite tracial state $\phi$ on $M$. Let $q: A^{**}\to M$ be the canonical projection. Given $\tau\in \mathcal T_\mathfrak A(A)$,  consider the normal extension $\tau^{**}: A^{**}\to \mathfrak A^{**}$, Then $\tau^{**}=0$ on $N$, by maximality of $M$, in particular, for each $f\in \mathcal S(\mathfrak A)$, $\phi:=f\circ\tau^{**}$ is a tracial state on $M$, as $\tau(1_A)=1_\mathfrak A$. Therefore,  $\tau^{**}\circ {\rm tr}_M=\tau^{**}$, as states separate the points of $\mathfrak A$. Identifying $A$ with its image in $A^{**}$, we may rewrite the last equality as, $\tau^{**}({\rm tr}_M(q(a))=\tau(a)$, for $a\in A$. 
	
	Define, 
	$$\iota: \mathcal T_\mathfrak A(A)\to CP_\mathfrak A(M, \mathfrak A^{**}); \ \ \iota(\tau)(x):=\tau^{**}(x), \ \ (x\in M),$$
	which is normal, since $M_*$ could be identified with the set of  elements in $A^*$ vanishing on $N$, and so it contains $f\circ\tau^{**}$, for each $f\in \mathfrak A^*$. Also, $\iota$ is injective, since for $\tau\in \mathcal T_\mathfrak A(A)$ with  $\iota(\tau)=0$, $\tau^{**}=0$ on $M$ and so on $A^{**}$ (as $\tau^{**}=0$ on $N$), thus,  $\tau=0$. 
	
	Now, let $j: M_*\to Z_*$ be the restriction map, and observe that  $$j\circ\iota: \mathcal T_\mathfrak A(A)\to CP_\mathfrak A(Z, \mathfrak A^{**})$$ is surjective: given $\phi\in CP_\mathfrak A(Z, \mathfrak A^{**})$, define $\tau_\phi: A\to \mathfrak A$ by $\tau_\phi(a):=\langle {\rm tr}_M(q(a)), \phi\rangle$, $a\in A$, then $\tau_\phi^{**}(x):=\langle {\rm tr}_M(q(x)), \phi\rangle$, $x\in A^{**}$, thus, 
	$$\iota(\tau_\phi)(x)= \tau^{**}_\phi(x)=\langle {\rm tr}_M(q(x)), \phi\rangle = \langle {\rm tr}_M(q(x)), \phi\rangle=\langle x, \phi\rangle,$$
	for $x\in Z$, that is, $j\circ\iota(\tau_\phi)=\phi$, as required.
	
	Next, let us observe that $\j\circ\iota$ is isometric: given $\tau\in \mathcal T_\mathfrak A(A)$,
	\begin{align*}
		\|j\circ\iota(\tau)\|&=\sup_{x\in Z_1}\|\iota(\tau)(x)\|\leq\sup_{x\in M_1}\|\tau^{**}(x)\|\\&\leq\sup_{x\in A^*_1}\|\tau^{**}(x)\|=\sup_{a\in A_1}\|\tau(a)\|\\&=\sup_{a\in A_1}\|\tau^{**}({\rm tr}_M(q(a)))\|\\&\leq\sup_{x\in Z_1}\|\tau^{**}(x))\|\\&=\|j\circ\iota(\tau)\|,
	\end{align*}
	therefore, $\|j\circ\iota(\tau)\|=\|\tau\|$. 
	
	When $\mathfrak A$ is a von Neumann algebra, $\tau^*: \mathfrak A^*\to A^*$ could be restricted to $\mathfrak A_*$, and taking adjoint, we get a normal extension $\tau^{**}: A^*\to\mathfrak A$. The rest goes as above. 
\end{proof}

\begin{remark}
$(i)$ A positive linear map from $Z$ to $\mathfrak A$ is automatically c.p. \cite[2.2.6]{li}, that is, $CP_\mathfrak A(Z, \mathfrak A)=P_\mathfrak A(Z, \mathfrak A)$. 

$(ii)$ In the above lemma, we may replace $\mathfrak A$ with $\mathfrak Z:=Z(\mathfrak A)$ and identify the convex set $\mathcal T_\mathfrak Z(A)$ of $\mathfrak Z$-valued traces on $A$ with $CP_\mathfrak Z(Z, \mathfrak Z)=P_\mathfrak Z(Z, \mathfrak Z)$.
\end{remark}

\begin{corollary}
	Let $A$ be unital and let $M$ be the maximal finite summand of $A^{**}$.  Then there is a continuous affine isomorphism between $Z_+$ and a convex subset of ${\rm Aff}^b_\mathfrak A(\mathcal T_\mathfrak A(A))_+$, where $Z$ is the center of the maximal finite summand of $A^{**}$.
\end{corollary}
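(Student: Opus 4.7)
The natural candidate is the evaluation map
$$\Phi\colon Z_+\longrightarrow {\rm Aff}^b_\mathfrak A(\mathcal T_\mathfrak A(A))_+,\qquad \Phi(z)(\tau):=\tau^{**}(z),$$
where $\tau^{**}\colon A^{**}\to \mathfrak A^{**}$ is the unique normal extension of $\tau$. By the preceding proposition, this coincides with evaluation at $z$ of the associated normal c.p.\ module map $(j\circ\iota)(\tau)\in CP_\mathfrak A(Z,\mathfrak A^{**})$; when $\mathfrak A$ is a von Neumann algebra the values already lie in $\mathfrak A$, which is the case relevant for the codomain ${\rm Aff}^b_\mathfrak A$ to be strictly $\mathfrak A$-valued.

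The first step is to verify that $\Phi(z)\in {\rm Aff}^b_\mathfrak A(\mathcal T_\mathfrak A(A))_+$ for each $z\in Z_+$. Affinity in $\tau$ is immediate, since $\tau\mapsto\tau^{**}$ is affine and evaluation at the fixed $z$ is linear. Positivity and $\mathfrak A$-valuedness of $\Phi(z)(\tau)$ are inherited from the c.p.\ module maps $(j\circ\iota)(\tau)$. Boundedness follows from $\|\Phi(z)(\tau)\|\leq \|z\|$, using that $\|\tau\|=\|\tau(1_A)\|=1$ for every $\tau\in\mathcal T_\mathfrak A(A)$. Continuity of $\Phi(z)$ on $\mathcal T_\mathfrak A(A)$ is then checked with respect to the point-ultra-strong topology (under which $\mathcal T_\mathfrak A(A)$ is compact, as exploited already in the proof of Theorem~\ref{faithful}), where evaluation at a fixed $z\in Z$ is continuous via the dual pairing transported through $j\circ\iota$. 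The map $\Phi$ itself is affine by the linearity of each $\tau^{**}$, and the estimate $\|\Phi(z_n)(\tau)-\Phi(z)(\tau)\|\leq \|z_n-z\|$ (uniform in $\tau$) gives norm-to-uniform continuity of $\Phi$. The image $\Phi(Z_+)$ is then a convex subset of the target as the linear image of a convex set.

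The crux, and the main obstacle, is injectivity of $\Phi$. If $\Phi(z)=0$, then by the surjectivity of $j\circ\iota$ from the preceding proposition, $\phi(z)=0$ for every $\phi\in CP_\mathfrak A(Z,\mathfrak A^{**})$, so I must show that such module maps separate the points of $Z$. The plan is to exploit that $Z$ is an abelian von Neumann algebra whose normal states are point-separating, and to combine these states with the canonical $*$-homomorphism $\mathfrak A\to Z(A^{**})$ (available since $A$ is a central $\mathfrak A$-bimodule, so $\mathfrak A\cdot 1_A\subseteq Z(A)$) compressed by the central projection $1_M$ onto $Z$. Together with the faithful center-valued trace ${\rm tr}_M\colon M\to Z$ used in the preceding proposition, this should produce enough elements of $CP_\mathfrak A(Z,\mathfrak A^{**})$ to separate points of $Z$, thereby forcing $z=0$ and completing the proof.
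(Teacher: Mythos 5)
Your proposal follows essentially the same route as the paper: the paper's proof consists precisely of introducing $\hat x(\tau)=\tau^{**}(x)$, noting $\hat x(\tau)\leq \|x\|\,1_{\mathfrak A}$ for well-definedness, asserting (point-ultra-strong)-(ultra-strong) continuity, and bounding $\|\hat x\|\leq \|x\|$ via the isometry of $j\circ\iota$ from the preceding proposition --- exactly the checks you carry out. The injectivity question you single out as ``the crux'' is in fact not addressed in the paper's proof at all, so your sketched separation argument goes beyond the paper rather than falling short of it; do note, though, that as written it is only a plan (naive candidates such as $z\mapsto\omega(z)1_{\mathfrak A}$ for a normal state $\omega$ on $Z$ are not $\mathfrak A$-module maps, and producing separating normal c.p.\ module maps $Z\to\mathfrak A^{**}$, e.g.\ via normal conditional expectations onto the weak closure of the image of $\mathfrak A$ in $Z$ composed with the inverse of the induced isomorphism from a corner of $\mathfrak A^{**}$, is a genuinely nontrivial point).
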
	
\begin{proof}
Consider the affine map,
$$x\in Z_+\mapsto \hat x\in {\rm Aff}^b_\mathfrak A(\mathcal T_\mathfrak A(A))_+; \ \ \hat x(\tau)=\tau^{**}(x), $$
which is well-defined, since, $$\hat x(\tau)=\tau^{**}(x)\leq \|x\|\tau^{**}(1_{A^{**}})=\|x\|1_\mathfrak A,$$
and $\hat x$ is clearly (point-ultra-strong)-(ultra-strong)-continuous. Also, in the notations of the above proposition, 
$$\|\hat x\|=\sup_{\|\tau\|\leq 1}\|\langle j\circ\iota(\tau), x\rangle\|\leq \|x\|,$$
as $j\circ\iota$ is isometric. 
\end{proof}

\begin{remark}
$(i)$ In the above result, if  moreover for each $x\in Z_+$, there is $\phi\in CP_\mathfrak A(Z, \mathfrak A)$ with $\|\phi\|=1$ and $\|\phi(x)\|=\|x\|$, then the affine isomorphism $x\mapsto \hat x$ is isometric: in the notations of the above proposition,  $\|\tau_\phi\|=\|j\circ\iota(\tau_\phi)\|=\|\phi\|=1$, therefore,
$$\|x\|=\|\phi(x)\|=\|\tau_\phi^{**} (x)\|=\|\hat x(\tau_\phi)\|\leq \|\hat x\|,$$
and so, $\|x\|=\|\hat x\|$, as claimed.

$(ii)$ Unlike the classical case \cite[14.3.33]{s}, the above affine isomorphism is not generally surjective in the module case. Indeed, the surjectivity follows in the classical case from a result of Cuntz-Pedersen, stating that the dual of ${\rm span}_\mathbb R(\mathcal T(A))$ with induced weak$^*$-topology is $A_{sa}^q$ \cite[Proposition 2.7]{cp}. However, this does not seem to have an analog in the module case.

$(iii)$ The lack of surjectivity in the above result has some consequences. In the classical case, the surjectivity of this map is used to show that the map $\tilde\iota$ is an order isomorphism between $W(A)$ and $\tilde W(A)$ when $A$ is simple, separable, unital, exact C*-algebra of stable rank one, satisfying strict comparison of positive elements, and having a weakly divisible Cuntz semigroup (c.f., \cite[14.3.38]{s}). In the module case, $\tilde\iota: W(A)\to W_\mathfrak A(A)$ may fail to be surjective, even if $A$ has $\mathfrak A$-SC and $W(A)$ is weakly divisible. However, it is not hard to characterize the image of $W(A)$ inside $W_\mathfrak A(A)$.          
\end{remark}

\section{Examples} \label{5}

In this section we calculate the module nuclear dimension in some concrete classes of C*-algebras with canonical module structure.

\begin{example}
As the first series of examples, we consider the case that $\mathfrak A=C_0(X)$ is commutative, and $A$ is a $C_0(X)$-algebra \cite[Definition C.1]{wi}. Then there is a $*$-homomorphism $\phi: C_0(X)\to Z(M(A))$, defining a right action of $C_0(X)$ on $A$ by $a\cdot f:=\phi(f)a$. Here are some immediate instances, where module nuclear dimension could be calculated (or at least estimated):

$(i)$ If $Y$ is another locally compact space and $A:=C_0(X\times Y)$, then we have a canonical map
$$\phi: C_0(X)\to C_b(X\times Y); \ \phi(f)(x,y)=f(x),\ \ (x\in X, y\in Y),$$ and when both $X$ and $Y$ are second countable, by Corollary \ref{ub},
$${\rm dim}_{\rm nuc}^{C_0(X)}C_0(X\times Y)\geq \frac{{\rm dim}(X)+{\rm dim}(Y)+1}{{\rm dim}(X)+1}-1.$$
On the other hand, it is easy to see that,
${\rm dim}_{\rm nuc}^\mathfrak A(\mathfrak A\otimes B)\leq {\rm dim}_{\rm nuc}(B)$, thus, 
 $$\frac{{\rm dim}(Y)}{{\rm dim}(X)+1}\leq{\rm dim}_{\rm nuc}^{C_0(X)}C_0(X\times Y)\leq {\rm dim}(Y).$$

$(ii)$ As a concrete example, it follows that $\frac{n}{2}\leq {\rm dim}_{\rm nuc}^{C_0(\mathbb T)}C_0(\mathbb T^n)\leq n$. In particular, ${\rm dim}_{\rm nuc}^{C_0(\mathbb T)}C_0(\mathbb T^2)=1$.    
\end{example}

\begin{example}
Let $A$ be a  simple, separable, infinite dimensional, unital C*-algebra and $\mathfrak A= C((0, 1],A)$ is the cone of $A$. Then $\mathfrak A$ is a continuous $C((0, 1])$-algebra with simple fibers $A$ at each point. Consider $A$ as a right $\mathfrak A$-module via,
$$a\cdot (f\otimes b):=f(1)ab, \ \ (a,b\in A),$$
Then the identity ${\rm id}_A$ on $A$ decomposes via $*$-homomorphisms   
$$A \to  C((0, 1],A)\to A; \ \ a\mapsto {\rm id}\otimes a, \ f\otimes a\mapsto f(1)a,$$ and so ${\rm nd}_\mathfrak A A=0$ (c.f., \cite[Remark 4.3.10]{g}. However, we show that ${\rm dim}_{\rm nuc}^\mathfrak A(A)=\infty$, in this case: If $\psi: A\to \mathfrak A$ is a module map, then,
$$\psi(a\cdot(f\otimes b))=\psi(a)(f\otimes b),$$
that is, $f(1)\psi(ab)=\psi(a)(f\otimes b)$, for $f\in C_0(0,1]$, and $a,b\in A$. When $f(1)\neq 0$, putting $a=1$, we get, $\psi(b)=\frac{\psi(1_A)}{f(1)}(f\otimes b)$. Calculating $\psi(b)(t)$, at $b=1_A$ for $f_\varepsilon(1)=1, f_\varepsilon(t)=2$, for $\varepsilon<t<1-\varepsilon$,
 we get $\psi(1_A)(t)=0$, for $t$ as above, and each $0<\varepsilon<\frac{1}{2}$. By continuity, $\psi(1_A)=0$, and so $\psi=0$, i.e., there is no non-zero module map: $A\to \mathfrak A$. It follows that there is also no non-zero module map: $A\to \mathbb M_n(\mathfrak A)$, for each $n\geq 1$. In particular, the only possible c.c.p. admissible map: $A\to \mathbb M_n(\mathfrak A)$ is of the form $a\mapsto \rho(a)u$, for $\rho\in A^*_{1,+}$ and $u\in \mathbb M_n(\mathfrak A)_+$. Choose distinct elements $a,b\in A$ with $\rho(a)=\rho(b)=1$ (which is possible, as $A$ is infinite dimensional), and for  $\mathcal F:=\{a,b\}$, let $\varphi_n: \mathbb M_{k(n)}(\mathfrak A)\to A$ be an admissible map with $\varphi_n(u)=\varphi_n(\rho(x)u)\approx_{\frac{1}{n}} x$, for $x=a,b$, and some $u\in \mathbb M_{k(n)}(\mathfrak A)_+$, thus, $a\approx_{\frac{2}{n}}b$, for each $n\geq 1$, which is a contraction.            

Similarly, ${\rm dr}_\mathfrak A(A)=\infty$. When $\mathfrak A$ is unital and simple, there is an alternative way to see this using Lemma \ref{key}: there is no  approximately multiplicative sequence of maps $\varphi_n: A\to \mathbb M_{k(n)}(\mathfrak A)$, since then the unit $1_A$ is asymptotically mapped to a projection, but this is not possible,  since $\mathfrak A:=C((0, 1],A)$ does not contain any projection, and
nor does any of the algebras $\mathbb M_{k(n)}(\mathfrak A)$, since any such projection has to be
unitarily equivalent to a diagonal one. By the same token, any of these algebras could  not contain ``almost projections''.
\end{example}

\begin{example} \label{equi}
$(i)$ We give a class of examples in which the implication $(iii)\Rightarrow(ii)$ in Theorem \ref{af} holds. Let $A$ and $\mathfrak A$ be separable and unital, and let $\mathfrak A$ be a continuous
$C(X)$-algebra with simple fibres, for some  totally disconnected, compact Hausdorff space $X$, then the above implication holds:  If ${\rm dim}_{\rm nuc}^\mathfrak A(A)=0$, then we also have ${\rm nd}_\mathfrak A(A)= 0 $, thus $A$ is $\mathfrak A$-NF, by \cite[Theorem 4.3.12]{g}.

$(ii)$ We give a class of examples in which the identity map ${\rm id}_A: A\to A$ approximately decomposes via $F\otimes \mathfrak A$, with $F$ finite dimensional and the upward maps approximately order 0. Let $A$ and $\mathfrak A$ be separable and unital, and let $\mathfrak A$ be a continuous
$C_0(X)$-algebra with simple fibres, for some  totally disconnected, locally compact, Hausdorff space $X$, then if ${\rm dr}_\mathfrak A(A)<\infty$, then we may always choose an approximate decomposition with upward maps approximately order 0: it is shown in \cite[Proposition 4.3.7]{g} that starting with an approximate decomposition, one could properly modify the upward maps to become approximately order 0. This modification involves multiplying downward maps with scalar-valued continuous functions on $X$ (which preserves admissibility) and using ``supporting homomorphisms'' of the upward maps (which exist in the case of module maps, by Lemma \ref{homo}).
If $\mathfrak C$ is the Cantor set, $\mathfrak A:=\mathbb K(C(\mathfrak C)\otimes\ell^2)=C(\mathfrak C)\otimes \mathbb K(\ell^2)$ is a concrete example.   
\end{example}  

Recall that a C*-algebra $A$ is {\it projective} if given a $*$-homomorphism 
$\phi: A\to B/I$, for a C*-algebra factor with quotient map $q: B\to B/I$, there is a $*$-homomorphism lift $\tilde\phi: A\to B$ with $q\circ\tilde\phi=\phi$ \cite{l}. Finite dimensional C*-algebras and their cones are known to be projective. More generally, the cone of a separable C*-algebra is a direct limit of projective C*-algebras \cite{ls}. We say that an $\mathfrak A$-C*-algebra $A$ is $\mathfrak A$-{\it projective} if for each $\mathfrak A$-C*-algebra $B$ and each closed ideal and submodule $J\unlhd B$, each $*$-homomorphism and module map $\phi: A\to B/J$ lifts to a $*$-homomorphism and module map $\tilde \phi: A\to B$ with $q\circ\tilde\phi=\phi$, where $q: B\to B/J$ is the quotient map. We say that  $A$ is $\mathfrak A$-{\it cone projective} if the cone $CA$ is  $\mathfrak A$-projective. Since $\mathbb M_n(C\mathfrak A)=C\mathbb M_n(\mathfrak A)$, it follows that if $\mathfrak A$ is $\mathfrak A$-cone projective, so is $\mathbb M_n(\mathfrak A)$. When $\mathfrak A=\mathbb C$, finite dimensional C*-algebras are $\mathbb C$-cone projective.      

\begin{example} \label{eseq}
	In this example, we are going to show  further properties which apparently hold only in the case where $\mathfrak A$ is $\mathfrak A$-cone projective. 
	
	$(i)$ If $\mathfrak A$ is $\mathfrak A$-cone projective, then  ${\rm dr}(A)={\rm dr}_{\mathfrak A}(\iota_A)={\rm dr}_{\mathfrak A}(A)$:
	Each approximate factoring of ${\rm id}_A$ gives one for $\iota_A$, hence, in general we have, ${\rm dr}_{\mathfrak A}(A)\geq{\rm dr}_{\mathfrak A}(\iota_A)$. For the reverse inequality, and for given finite set $\mathcal F\ssubset A$ and $\varepsilon > 0$,  finite dimensional C*-algebra $F = \mathbb M_{k(0)}\oplus \cdots\oplus \mathbb M_{k(n)}$, c.c.p. admissible map $\psi: A \to  F \otimes \mathfrak A$,  c.p. module map $\varphi:=\sum_{i=0}^{n} \varphi^{(i)}: F\otimes \mathfrak A\to A_\infty$, we have  $\|\varphi\circ\psi(a) - \iota_A(a)\|< \varepsilon$, for $a\in \mathcal F$, with $\varphi^{(i)}$ order 0 module map on $F^{(i)}\otimes \mathfrak A$.
	By Lemma \ref{homo}, there is a $*$-homomorphism module map $\pi^{(i)}: C\mathbb M_{k(i)}(\mathfrak A)\to A_\infty$ with $\pi^{(i)}({\rm id}\otimes x)=\varphi^{(i)}(x)$, for $x\in \mathbb M_{k(i)}(\mathfrak A)$. Since $\mathbb M_{k(i)}(\mathfrak A)$ is $\mathfrak A$-cone projective, the map $\pi^{(i)}$ has a lift to a $*$-homomorphism module map $\tilde\pi{(i)}:\mathbb M_{k(i)}(\mathfrak A)\to \ell^\infty(A)$ with $q\circ\tilde\pi{(i)}=\pi^{(i)}$, for the quotient map $q: \ell^\infty(A)\to A_\infty$. In particular, 
	$q\circ\pi{(i)}({\rm id}\otimes x)=\varphi^{(i)}(x),$ for each $x\in \mathbb M_{k(i)}(\mathfrak A).$    
	Let us write, $\pi{(i)}=(\pi_{k}^{(i)})_k$, with $\pi_{k}^{(i)}: \mathbb M_{k(i)}(\mathfrak A)\to A$  $*$-homomorphism and module map. Put $\tilde\varphi_{k}^{(i)}(x):=\pi_{k}^{(i)}({\rm id}\otimes x)$, for $x\in C\mathbb M_{k(i)}(\mathfrak A)$. Given $a\in \mathcal F$, let $\bar a:=(a,a,\cdots)\in\ell^\infty(A)$ be the corresponding constant sequence, then $\iota_A(a)=q(\bar a)$. Let $\psi(a)=(b^{(i)})_{i=0}^{n}=:(\psi^{(i)}(a))_{i=0}^{n}$, with $b^{(i)}\in  C\mathbb M_{k(i)}(\mathfrak A)$, and observe that,   
	\begin{align*}
		\varepsilon&\geq\|\iota_A(a)-\sum_{i=0}^{n}\varphi^{(i)}(\psi(a))\| =\|q(\bar a)-\sum_{i=0}^{n}q(\pi^{(i)}({\rm id}\otimes b^{i}))\|\\&\geq\| a-\sum_{i=0}^{n}\pi_{k}^{(i)}({\rm id}\otimes b^{i})\|-\varepsilon=\| a-\sum_{i=0}^{n}\varphi_k^{(i)}\circ\psi^{(i)}(a)\|-\varepsilon,
	\end{align*} 
	for large $k$. This plus the fact that each $\varphi_{k}^{(i)}$ is order 0, shows that ${\rm dr}(A)\leq {\rm dr}_\mathfrak A(\iota_A)$.

	$(ii)$ By a similar proof, one could conclude that when $\mathfrak A$ is $\mathfrak A$-cone projective, for each $*$-homomorphism and module map $\theta: A\to B$, we have 
	$${\rm dr}(\theta)={\rm dr}_{\mathfrak A}(\theta\circ\iota_B)= {\rm dr}_{\mathfrak A}(\theta).$$
	
	$(iii)$ In part $(i)$, if we could arrange the upward maps $\varphi^{(i)}: F\otimes\mathfrak A\to A_\infty$ to be c.p. order 0 {\it module maps}, then we also have,  ${\rm dim}_{\rm nuc}(A)\geq{\rm dim}_{\rm nuc}^\mathfrak A(\iota_A)$. The other inequality always holds for nuclear dimension as well. Similarly, if we could arrange the upward maps $\varphi^{(i)}: F\otimes\mathfrak A\to B_\infty$ to be c.p. order 0 {\it module maps}, then we also have, ${\rm dim}_{\rm nuc}(\theta)={\rm dim}_{\rm nuc}^{\mathfrak A}(\theta\circ\iota_B)= {\rm dim}_{\rm nuc}^{\mathfrak A}(\theta).$ Similar facts hold for any map  $\theta: A\to B$, as above.
	
	$(iv)$ Given an exact sequence 
	$$0\to J\to A\to A/J\to 0$$
	of $\mathfrak A$-C*-algebras with connecting maps being $*$-homomorphisms and module maps, such that both $r := {\rm dim}_{\rm nuc}^{\mathfrak A}(J)$ and $s := {\rm dim}_{\rm nuc}^{\mathfrak A}(A/J)$ are
	finite. Given $\mathcal F\subseteq A_{1,+}$
	and $\varepsilon > 0$,  choose a  c.p. $\varepsilon/5$-approximate decomposition of ${\it id}_{A/J}$ via 
$F\otimes\mathfrak A=(F^{(0)}\oplus\cdots\oplus F^{(s)})\otimes\mathfrak A$ for $q(\mathcal F)$, where $q: A\to A/J$ is the quotient map with downward and upward maps $\psi$ and $\varphi$, respectively, with order 0 restrictions $\varphi^{(j)}$ on $F^{(j)}\otimes \mathfrak A$. If $\mathfrak A$ is separable and $\mathfrak A$-cone projective, each $\varphi^{(j)}$ lifts to a c.p.c. order zero map
$\tilde\varphi^{(j)}: F^{(j)}\otimes\mathfrak A\to A$, and $\tilde\varphi:=\tilde\varphi^{(0)}+\cdots+\tilde\varphi^{(s)}$ is c.p. lift of $\varphi$, which is also a module map. 

Since the cone of each separable C*-algebra could be presented as a universal C*-algebra with generators and relations, using \cite[Theorem 14.1.4]{lo}, by an argument similar to that of \cite[1.2.3]{wi2}, there is a constant $\delta>0$ such that for each $d\in A_{1,+}$  satisfying $[d, \tilde\varphi^{(j)}(b)]<\delta\|b\|$, for $b\in q(\mathcal F)$, one can find a c.p.c. order 0 module map $\hat\varphi: F^{(j)}\otimes\mathfrak A\to A$ with 
$$\|\hat\varphi^{(j)}(x)-d^{\frac{1}{2}}\tilde\varphi^{(j)}(x)d^{\frac{1}{2}}\|<\frac{\varepsilon}{5(s+1)}\|x\|,\ \ (x\in F^{(j)}\otimes \mathfrak A).$$	Using a quasicentral approximate unit in $J$ for $A$, we get an element $h\in J_{1,+}$ with
$[(1 - h), \tilde\varphi^{j}(x)]<\delta\|x\|$, for $x\in F^{(j)}\otimes\mathfrak A,$ and 
$$\|h^{\frac{1}{2}}ah^{\frac{1}{2}}+(1-h)^{\frac{1}{2}}a(1-h)^{\frac{1}{2}}-a\|<\frac{\varepsilon}{5}, \ \|(1-h)^{\frac{1}{2}}\big(\tilde\varphi\circ\psi(q(a))-a\big)(1-h)^{\frac{1}{2}}-a\|<\frac{2\varepsilon}{5},$$
 for $a\in\mathcal F.$ Using the above observation for $d=1-h$ and $\hat\varphi:=\hat\varphi^{(0)}+\cdots\hat\varphi^{(s)}$, we have,
$$\|\hat\varphi(x)-(1-h)^{\frac{1}{2}}\tilde\varphi(x)(1-h)^{\frac{1}{2}}\|<\frac{\varepsilon}{5}\|x\|,\ \ (x\in F^{(j)}\otimes \mathfrak A).$$
Next, choose a c.p. $\varepsilon/5$-approximate decomposition of ${\it id}_{J}$ via 
$F^{'}\otimes\mathfrak A=(F^{('0)}\oplus\cdots\oplus F^{('r)})\otimes\mathfrak A$ for $h^{\frac{1}{2}}\mathcal Fh^{\frac{1}{2}}$,
with downward and upward maps $\psi^{'}$ and $\varphi^{'}$, respectively. Then we get a c.p. $\varepsilon$-approximate decomposition of ${\it id}_{A}$ via 
$(F\oplus F^{'})\otimes\mathfrak A$ for $\mathcal F$,
with downward and upward maps $\psi\circ q\oplus\psi^{'}\circ{\rm ad}_{h^{\frac{1}{2}}}$ and $\hat\varphi+\varphi^{'}$, respectively, that is,  ${\rm dim}_{\rm nuc}^{\mathfrak A}(A)\leq r+s+1$, as in the classical case (see, Remark \ref{es}$(iii)$). 

$(v)$ Let $X$ be a compact metrizable space, $H$ be a separable infinite
dimensional Hilbert space. By an $\mathfrak A$-{\it Toeplitz
algebra} we mean an extension $E$,
$$0\to \mathbb K(H\otimes\mathfrak A)\to E\to C(X)\otimes \mathfrak A\to  0.$$
If $\mathfrak A$ is $\mathfrak A$-cone projective, by Remark \ref{es}$(i)$ and part $(iv)$ above, for any $\mathfrak A$-Toeplitz algebra $E$, 
$$ {\rm dim}_{\rm nuc}^{\mathfrak A}(C(X)\otimes\mathfrak A)\leq {\rm dim}_{\rm nuc}^{\mathfrak A}(E)\leq  {\rm dim}_{\rm nuc}^{\mathfrak A}(C(X)\otimes\mathfrak A)+{\rm dim}_{\rm nuc}^{\mathfrak A}(\mathbb K(H\otimes\mathfrak A))+1.$$
Applying the estimate ${\rm dim}_{\rm nuc}^\mathfrak A(\mathfrak A\otimes B)\leq {\rm dim}_{\rm nuc}(B)$, to $B=C_0(X)$ and $\mathbb K(H)$, we get the upper bound, ${\rm dim}_{\rm nuc}^{\mathfrak A}(E)\leq  {\rm dim}(X)+1.$  When $\mathfrak A$ is separable and $X=\mathfrak C$ is the Cantor set, $C(\mathfrak C)\otimes\mathfrak A$ is $\mathfrak A$-AF, and by Theorem \ref{af}, $ {\rm dim}_{\rm nuc}^{\mathfrak A}(C(\mathfrak C)\otimes\mathfrak A)=0$, and we get  ${\rm dim}_{\rm nuc}^{\mathfrak A}(E)=0$ or $1$.
\end{example}

Let us next recall the notion of Rokhlin dimension for finite group actions on
C*-algebras. Let $G$ be a finite group, $\mathcal A$ a unital C*-algebra and
$\gamma : G \to {\rm Aut}(\mathfrak A)$ be
an action of $G$ on $\mathfrak A$. We say that $\gamma$ has Rokhlin dimension $d$, writing ${\rm dim}_{\rm Rok}(\gamma):=d$, if $d$ is the least integer such
that the following holds: for any $\varepsilon > 0$ and every finite subset $\mathcal L \subseteq \mathfrak A$, there are positive
contractions $\alpha_s^i\in \mathfrak A$, for $s\in G$, $i=0,\cdots, d$, satisfying:

(1) 
$\|\alpha_s^i\alpha_t^i\|<\varepsilon$, whenever $s\neq t$,

(2) 
$\|1_\mathfrak A-\sum_{i=0}^{d}\sum_{s\in G}\alpha_s^i\|<\varepsilon,$

(3) $\|\gamma_t(\alpha_s^i)-\alpha_{ts}^i\|<\varepsilon,$

(4) $\|[\beta, \alpha_s^i]\|< \varepsilon$,

\noindent for $s,t\in G$, $i=0,\cdots, d$, and $\beta\in\mathcal L$. 

By approximating the function $t\mapsto t^{\frac{1}{2}}$ by polynomials on $[0, 1]$ we may
always assume that we further have,

(5) $\|(\alpha_s^i)^{\frac{1}{2}}(\alpha_t^i)^{\frac{1}{2}}\|<\varepsilon$, whenever $s\neq t$, 

(6)
$\|\gamma_t\big((\alpha_s^i)^{\frac{1}{2}}\big)-(\alpha_{ts}^i)^{\frac{1}{2}}\|<\varepsilon,$
 
(7)
$\|[\beta, (\alpha_s^i)^{\frac{1}{2}}]\|< \varepsilon$,

\noindent for $s,t\in G$, $i=0,\cdots, d$, and $\beta\in\mathcal L$.
 
When $d = 0$, we say that $\gamma$ has Rokhlin property. If we further could guarantee that, 

(8) $(\alpha_s^i)^{\frac{1}{2}}\beta=\gamma_s(\beta)(\alpha_s^i)^{\frac{1}{2}},$

\noindent for  $s\in G$, $i=0,\cdots, d$, and $\beta\in\mathfrak A$, then we say that $\gamma$ has compatible Rokhlin dimension $d$, and write ${\rm dim}_{\rm Rok}^{\rm com}(\gamma):=d$.

We say that $\mathfrak A$ satisfies {\it admissible stability property for order zero maps} (ASOZ) if for each $n\geq 1$, given finite subset $\mathcal F\subseteq \mathbb M_n(\mathfrak A)$ and $\varepsilon>0$, there is $\eta>0$ such that for each $\mathfrak A$-C*-algebra $B$ and c.p. admissible map $\phi: \mathbb M_n(\mathfrak A)\to B$ with $\|\phi(a)\phi(b)\|<\eta$, for orthogonal elements $a,b\in\mathcal F$, there is a c.p.  order 0 admissible map $\hat\phi: \mathbb M_n(\mathfrak A)\to B$ with $\|\hat\phi(a)-\phi(a)\|<\varepsilon$, for $a\in\mathcal F$. If {\it admissible} map in the above definition is replaced everywhere by {\it module} map, we say that $\mathfrak A$ satisfies {\it module stability property for order zero maps} (MSOZ).
      
\begin{example}
Let $G$ be a finite group, $\mathfrak A$ be a unital C*-algebra satisfying (ASOZ) (resp., (MSOZ)) and $\gamma : G \to {\rm Aut}(\mathfrak A)$ be an action with (resp., compatible) finite Rokhlin dimension.
Then the crossed product $A:=\mathfrak A\rtimes_\gamma  G$ has finite $\mathfrak A$-nuclear dimension (resp., finite $\mathfrak A$-decomposition rank), and $${\rm dim}_{\rm nuc}^\mathfrak A(\mathfrak A\rtimes_\gamma G)\leq{\rm dim}_{\rm Rok}(\gamma) \ \ ({\rm resp.},\ \ {\rm dr}^\mathfrak A(\mathfrak A\rtimes_\gamma G)\leq{\rm dim}_{\rm Rok}^{\rm com}(\gamma)).$$   

To show this, let us set $n := |G|$ and $d:={\rm dim}_{\rm Rok}(\gamma) \ ({\rm resp.}, d:={\rm dim}_{\rm Rok}^{\rm com}(\gamma)$), and let the finite subset $\mathcal F\subseteq A:=\mathfrak A\rtimes_\gamma  G\subseteq \mathbb M_n(\mathfrak A)$ and $\varepsilon > 0$ be given. We may assume that $\mathcal F$
consists of contractions. Every element $a\in\mathcal F\subseteq \mathbb M_n(\mathfrak A)$ could be written as $a=\sum_{s\in G} a_su_s$, for uniquely determined contractions $a_s\in\mathfrak A$. Put $\mathcal L:=\{a_s: a\in\mathcal F, s\in G\}$. Given $\delta> 0$ and finite set $\mathcal L$, choose a multiple
tower $(\alpha_s^i)_{i=0}^{d}$ satisfying (1)–(7) (resp., (1)-(8)) above, for $\delta$ and $\mathcal L$. Consider the c.p. admissible (resp., module) map, $$\phi^{(i)}: \mathbb M_n(\mathfrak A)\to A;\ 
\phi^{(i)}(e_{s,t}\otimes\alpha) := (\alpha_s^i)^{\frac{1}{2}}u_s\alpha u_t^*(\alpha_t^i)^{\frac{1}{2}},$$
and put $\phi_0:=\sum_{i=0}^{d}\phi^{(i)}$, which is again a c.p. admissible map (resp., which is also a module map, since by (8),
\begin{align*}
	\phi^{(i)}(e_{s,t}\otimes\alpha\beta)&=(\alpha_s^i)^{\frac{1}{2}}u_s\alpha\beta u_t^*(\alpha_t^i)^{\frac{1}{2}}\\&=(\alpha_s^i)^{\frac{1}{2}}u_s\alpha u_t^*\gamma_{t}(\beta)(\alpha_t^i)^{\frac{1}{2}}\\&=(\alpha_s^i)^{\frac{1}{2}}u_s\alpha u_t^*(\alpha_t^i)^{\frac{1}{2}}\beta\\&=\phi^{(i)}(e_{s,t}\otimes\alpha)\beta,
\end{align*}	
as claimed). Next, observe that,
\begin{align*}
\phi^{(i)}(\alpha u_s)&= \sum_{t\in G}\phi^{(i)}\big(e_{t, s^{-1}t}\otimes\gamma_{t^{-1}}(\alpha)\big)\\&=\sum_{t\in G}(\alpha_t^i)^{\frac{1}{2}}u_t\gamma_{t^{-1}}(\alpha) u_{t^{-1}s}(\alpha_{s^{-1}t}^i)^{\frac{1}{2}}\\&=\sum_{t\in G}(\alpha_t^i)^{\frac{1}{2}}\alpha u_{s}(\alpha_{s^{-1}t}^i)^{\frac{1}{2}}\\&\approx_{n\delta}\sum_{t\in G}(\alpha_t^i)^{\frac{1}{2}}\alpha (\alpha_{s^{-1}t}^i)^{\frac{1}{2}}u_{s}\\&\approx_{n\delta}\sum_{t\in G}\alpha_t^i \alpha u_{s},
\end{align*}
thus, for $\delta_0:=2\delta((d+1)n+1)$,
$$\phi^{(i)}(\alpha u_s)\approx_{\delta_0}\alpha u_s,$$
for $s\in G$, $i=0,\cdots,d$, and $\alpha\in\mathcal L$, as $\mathcal L$ consists of contractions. Therefore, $\phi_0(a)\approx_{n\delta_0}a,$ for $a\in \mathcal F$. Let $1_n$ be  the identity of $\mathbb M_n(\mathfrak A)$, then since $$\|\phi_0(1_n)\|=\big\|\sum_{i=0}^{d}\sum_{s\in G} \alpha_s^{i}\big\|< 1 + \delta,$$ after normalization, we may assume that $\phi_0$ is c.c.p. with $\phi_0(a)\approx_{2n\delta_0}a,$ for $a\in \mathcal F$, that is, $\phi_0(a)\approx_{{\frac{1}{3}}\varepsilon}a,$ for $a\in \mathcal F$, when $\delta<\big(12n((d+1)n+1)\big)^{-1}\varepsilon$.  
Put $\alpha^i:=\sum_{s\in G}\alpha_s^{i}$, and observe that, given $\alpha,\beta\in\mathfrak A$ with and $s_1, s_2, t_1, t_2\in G$, for $\tilde\alpha:=e_{s_1,t_1}\otimes\alpha$ and $\tilde\beta:=e_{s_2,t_2}\otimes\beta$,  
$$\|\phi^{(i)}(\tilde\alpha)\phi^{(i)}(\tilde\beta)-\alpha^i\phi^{(i)}(\tilde\alpha\tilde\beta)\|< (n + 4)\delta.$$
Therefore, 
$$\|\phi^{(i)}(a)\phi^{(i)}(b)-\alpha^i\phi^{(i)}(ab)\|\leq (n + 4)n^3\delta,$$
for $a,b\in \mathcal F$. In particular, if $ab=0$, then $\|\phi^{(i)}(a)\phi^{(i)}(b)\|\leq (n + 4)n^3\delta.$ If $\eta>0$ be the constant as in (ASOZ) (resp., (MSOZ)) and $\delta<(n+4)^{-4}\eta$, we may replace $\phi^{(i)}$ with c.p. order 0 admissible (rep., module) map $\tilde\phi^{(i)}: \mathbb M_n(\mathfrak A)\to A$ such that for $\phi:=\sum_{i=0}^{d}\tilde\phi^{(i)}$, we have $\phi(a)\approx_{\frac{1}{3}\varepsilon}\phi_0(a)$, for $a\in\mathcal F$. Again, normalizing if needed, we may assume that $\phi$ is c.c.p. and $\phi(a)\approx_{\frac{2}{3}\varepsilon} a$, for $a\in\mathcal F$. Finally, if  $\psi: A\hookrightarrow \mathbb M_n(\mathfrak A)$ is the inclusion map, then $\phi\circ\psi(a)\approx_{\varepsilon} a$, for $a\in\mathcal F$, that is, ${\rm dim}_{\rm nuc}^\mathfrak A(\mathfrak A\rtimes_\gamma G)\leq d.$ ({\rm resp.},\  ${\rm dr}^\mathfrak A(\mathfrak A\rtimes_\gamma G)\leq d$). 
\end{example}

\section*{Author's statements}
{\bf Competing Interests}: There is no competing interest related to this article. 

{\bf Data availability}: Data sharing not applicable to this article as no datasets were generated or analysed during the current study.

\bibliographystyle{line}
\bibliography{JAMS-paper}

\begin{thebibliography}{}

\bibitem{a} M. Amini, Extensions and dilations of module maps, {\it Quaestiones Math.} {\bf 45}(10) (2022), 1629-1640.

\bibitem{a2} M. Amini, Finite  approximation properties of $C^*$-modules, {\it Illinois J. Math.} {\bf 66}(3) (2022), 315-348.

\bibitem{a3} M. Amini, Finite  approximation properties of $C^*$-modules II, {\it  J. Noncomm. Geometry} (to appear).



\bibitem{be} S. K. Berberian, {\it Baer *-rings}, Springer, Berlin, 2014.


\bibitem{bla} B. Blackadar, {\it K-Theory for operator algebras}, Mathematical Sciences Research Institute  Publications {\bf 5}, Cambridge University Press, Cambridge, 1998.   


\bibitem{bl} B. Blackadar, {\it Operator algebras:
Theory of C*-algebras and von Neumann algebras}, Encyclopaedia of Mathematical Sciences {\bf 122}, Springer-Verlag, Berlin, 2006.

\bibitem{bh} B. Blackadar and D. Handelman, Dimension functions and traces on C*-algebras, {\it J.
Funct. Anal.} {\bf 45} (1982), 297-340.
 

\bibitem{bk} B. Blackadar and E. Kirchberg, Generalized inductive limits of finite dimensional C*-algebras, {\it Math. Ann.} {\bf 307}(3) (1997), 343-380.


\bibitem{bp} L. G. Brown and G. K. Pedersen, C*-algebras of real rank zero, {\it J.
Funct. Anal.} {\bf 99}(1) (1991), 131-149.

\bibitem{bo} N. P. Brown and N. Ozawa, {\it $C^{*}$-algebras and finite-dimensional approximations}, Graduate Studies in Mathematics, American Mathematical Society,
Providence, 2008.


\bibitem{ce} M.-D. Choi and E. G. Effros, The completely positive lifting problem for C*-algebras, {\it Ann. Math.} {\bf 104}(3) (1976), 585-609. 

\bibitem{c} J. Cuntz, Dimension functions on simple C*-algebras, {\it Math. Ann.} {\bf 233} (1978), 145-153.


\bibitem{cp} J. Cuntz and G.K. Pedersen,  Equivalence and traces on C*-algebras, {\it J. Funct. Anal.} {\bf 33}(1979), 135-164.

\bibitem{dw} R. S. Doran, J. Wichmann, Approximate identities and factorization in Banach modules, {\it Lecture Notes in Mathematics} {\bf 768}, Springer-Verlag, Berlin, 1979.



\bibitem{fp} M. Frank and V. I. Paulsen, Injective envelopes Of $C^*$-algebras as operator modules, {\it Pacific J. Math.} {\bf 212}, No. 1, (2003), 57-69.

\bibitem{g} R. Gardner, The relative nuclear dimension of C*-algebras, and the
nuclear dimension of generalised Toeplitz algebras, PhD thesis, University of Ottawa, 2021.  

\bibitem{h} U. Haagerup, Quasitraces on exact C*-algebras are traces, {\it C. R. Math. Rep. Acad. Sci. Canada } {\bf 36} (2014), 67-92.  

\bibitem{jj}
M. de Jeu and X. Jiang, Simultaneous power factorization in modules over
Banach algebras, {\it Positivity} {\bf 21}  (2017), 633-672.

\bibitem{ka} I. Kaplansky, Projections in Banach algebras, {\it Ann.  Math.} {\bf 53}(2) (1951), 235-249.

\bibitem{kap} I. Kaplansky, {\it Rings of operators}, Mathematics Lecture Note Series, W. A. Benjamin, New York, 1968. 


\bibitem{kw} E. Kirchberg and W. Winter, Covering dimension and quasidiagonality, {\it Internat. J. Math.} {\bf 15}(1) (2004), 63-85.

\bibitem{lan} E. C. Lance, {\it Hilbert $C^*$-Modules: A Toolkit for Operator Algebraists},  London Mathematical Society Lecture Note Series, Vol. 210, Cambridge University Press, 1995.

\bibitem{li} H. Lin, {\it An introduction to the classification of amenable C*-algebras}, World Scientific, Singapore, 2001.

\bibitem{lo}
T. A. Loring, {\it Lifting solutions to perturbing problems in C*-algebras},
Fields Institute Monographs {\bf 8}, Amer. Math. Soc., Providence, 1997.

\bibitem{l}
T. A. Loring, Projective C*-algebras, {\it Math. Scand.} {\bf 73} (1993), No. 2, 274-280.



\bibitem{ls}
T. A. Loring and T. Shulman, Noncommutative semialgebraic sets and associated lifting problems, {\it Trans. Amer. Math. Soc.} {\bf 364}(2) (2012), 721-744.
 

\bibitem{pe} G.K. Pedersen, Measure theory for C*-algebras III, {\it Math. Scand.} {\bf 25} (1969), 71-93.

\bibitem{p} G.K. Pedersen, {C*-algebras and their automorphism groups}, Academic Press, New York, 1979. 

\bibitem{r} M. A. Rieffel. Dimension and stable rank in the K-theory of C*-algebras,
{\it Proc. London Math. Soc.} (3) {\bf 46}(2) (1983), 301-333.


\bibitem{ro} M. R{\o}rdam, On the structure of simple C*-algebras tensored with a UHF-algebra II, {\it J. Funct. Anal.}
{\bf 107} (1992), 255-269.




\bibitem{s} K. R. Strung, {\it An introduction to C*-algebras and the classification program}, CRM, Barcelona, Birkha\"{u}ser, 2021.

\bibitem{t} M. Takesaki, {\it Theory of operator algebras I}, Springer New York, 1979.


\bibitem{tiw} A. Tikuisis, W. Winter, Decomposition rank of $Z$-stable C*-algebras, {\it 
Analysis and PDE} {\bf 7} (2014), , No. 3, 673-700.

\bibitem{to} J. Tomiyama, Tensor products and projections of norm one in von Neumann algebras, Seminar notes of the Math. Institute, University of Copenhagen (1970).


\bibitem{wi} D. P. Williams, {\it Crossed products of C*- algebras}, Mathematical Surveys and Monographs {\bf 134}, American Mathematical Society, Providence, 2007. 

\bibitem{wz}  W. Winter, Covering dimension for nuclear C*-algebras, {\it J. Funct. Anal.} {\bf  199}(2) (2003),  535-556.

\bibitem{wi2}  W. Winter, Covering dimension for nuclear C*-algebras II, {\it
Trans. Amer. Math. Soc.} {\bf  361} (2009), No. 8, 4143-4167.






\bibitem{wz2} W. Winter and J. Zacharias, Completely positive maps of order zero, {\it M\"unster J. Math.} {\bf 2} (2009),  311-324.

\bibitem{w} W. Winter and J. Zacharias, The nuclear dimension of C*-algebras, 
{\it Adv. Math.} {\bf 224}(2) (2010), 461-498.

\end{thebibliography}

\end{document}